\DeclareTextFontCommand{\textcyr}{\fontencoding{OT2}
    \fontfamily{wncyr}\fontseries{m}\fontshape{n}\selectfont}
\newcommand{\Sha}{\textcyr{\!Sh}}
\DeclareSymbolFont{rsfs}{U}{rsfs}{m}{n}
\DeclareSymbolFontAlphabet{\mathrsfs}{rsfs}
\theoremstyle{plain}
\newtheorem{theorem}{Theorem}[section]
\newtheorem{proposition}[theorem]{Proposition}
\newtheorem{lemma}[theorem]{Lemma}
\newtheorem{corollary}[theorem]{Corollary}
\newtheorem{question}[theorem]{Question}
\newtheorem{thm}{Theorem}[subsection]
\newtheorem{prop}[thm]{Proposition}%
\newtheorem{lem}[thm]{Lemma}
\theoremstyle{definition}
\newtheorem{example}[theorem]{Example}
\newtheorem{remark}[theorem]{Remark}
\newtheorem{definition}[theorem]{Definition}
\newtheorem{construction}[theorem]{Construction}
\newtheorem{cons}[thm]{Construction}
\newtheorem*{notation*}{Notation}
\DeclareMathOperator{\Aut}{Aut}
\DeclareMathOperator{\Br}{Br}
\DeclareMathOperator{\Char}{char}
\DeclareMathOperator{\coker}{coker}
\DeclareMathOperator{\Cor}{Cor}
\DeclareMathOperator{\Gal}{Gal}
\DeclareMathOperator{\Hom}{Hom}
\DeclareMathOperator{\im}{im}
\DeclareMathOperator{\Res}{Res}
\DeclareMathOperator{\Spec}{Spec}
\DeclareMathOperator{\ind}{ind}
\DeclareMathOperator{\per}{per}
\DeclareMathOperator{\abb}{ab}
\DeclareMathOperator{\locinf}{\loc_\infty}
\newcommand{\C}{{\mathds C}}
\newcommand{\R}{{\mathds R}}
\newcommand{\Q}{{\mathds Q}}
\newcommand{\Z}{{\mathds Z}}
\newcommand{\GG}{{\mathbf G}}
\newcommand{\QQ}{{Q}}
\newcommand{\HH}{{\bf H}}
\newcommand{\CCC}{{\sf C}}
\newcommand{\DDD}{{\sf D}}
\newcommand{\EEE}{{\sf E}}
\newcommand{\bs}{\hs\backslash\hs}
\newcommand{\ov}{\overline}
\newcommand{\lra}{\longrightarrow}
\newcommand{\ii}{{\boldsymbol{i}}}
\newcommand{\ad}{{\rm ad}}
\newcommand{\ab}{{\rm ab}}
\newcommand{\GL}{{\rm GL}}
\newcommand{\SL}{{\rm SL}}
\newcommand{\PGL}{{\rm PGL}}
\newcommand{\ssc}{{\rm sc}}
\newcommand{\into}{\hookrightarrow}
\newcommand{\onto}{\twoheadrightarrow}
\newcommand{\labelt}[1]{\xrightarrow{\makebox[1.2em]{\scriptsize ${#1}$}}}
\newcommand{\labelto}[1]{\xrightarrow{\makebox[1.5em]{\scriptsize ${#1}$}}}
\newcommand{\longisoto}{{\ \labelt{\raisebox{-1.ex}{$\sim$}}\ }}
\newcommand{\isoto}{\longisoto}
\newcommand{\hs}{\kern 0.8pt}
\newcommand{\hssh}{\kern 1.2pt}
\newcommand{\hshs}{\kern 1.6pt}
\newcommand{\hssss}{\kern 2.0pt}
\newcommand{\hm}{\kern -0.8pt}
\newcommand{\hmm}{\kern -1.2pt}
\newcommand{\emm}{\bfseries}
\newcommand{\X}{{{\sf X}}}
\newcommand{\id}{{\rm id}}
\newcommand{\V}{\mathcal{V}}
\newcommand{\sep}{{\rm sep}}
\newcommand{\loc}{{\rm loc}}
\newcommand{\alg}{{\rm alg}}
\newcommand{\G}{\Gamma}
\newcommand{\tors}{{\rm Tors}}
\newcommand{\Gt}{{\G\hm,\hs\tors}}
\newcommand{\vt}{\vartheta}
\newcommand{\Ho}{{\rm H}}
\newcommand{\Hon}{\Ho^1\hm}
\newcommand{\Zl}{{\rm Z}}
\newcommand{\di}{\Diamond}
\newcommand{\dit}{\nabla}
\newcommand{\functor}{{\,\rightsquigarrow\,}}
\newcommand{\Tors}{{\rm Tors}}
\newcommand{\bv}{{\breve{v}}}
\newcommand{\bw}{{\breve{w}}}
\newcommand{\Gvt}{{\G\hm_v\hs,\Tors}}
\newcommand{\Gbvt}{{\G\hm_\bv\hs,\Tors}}
\newcommand{\Gbwt}{{\G\hm_\bw\hs,\Tors}}
\newcommand{\lmod}{\setminus}
\renewcommand{\Gal}{\G}
\newcommand{\cp}{{\rm cp}}
\newcommand{\SO}{{\rm SO}}
\newcommand{\ur}{{\rm ur}}
\newcommand{\vk}{{\varkappa}}
\newcommand{\tame}{{\rm tame}}
\newcommand{\pp}{{\mathfrak p}}
\newcommand{\CC}{{\rm CC}}
\newcommand{\Teich}{{\mathfrak T}}
\newcommand{\ve}{{\varepsilon}}
\newcommand{\Th}{{\Theta}}
\newcommand{\cH}{{\mathcal H}}
\newcommand{\bdot}{{\boldsymbol{\cdot}}}
\newcommand{\ga}{{\gamma}}
\newcommand{\Tsf}{{\sf T\hm}}
\renewcommand{\le}{\leqslant}
\renewcommand{\ge}{\geqslant}
\newcommand{\mg}{{\rm gen}}
\begin{document}

\title[The power operation in Galois cohomology]%
{The power operation in the Galois cohomology\\ of a reductive group over a global field}

\author[Mikhail Borovoi and Zinovy Reichstein {\tiny with appendices by} M. Borovoi and P. Gille]%
{Mikhail Borovoi and Zinovy Reichstein\\
{\tiny with appendices by}\\
Mikhail Borovoi and Philippe Gille}

\address{Raymond and Beverly Sackler School of Mathematical Sciences,
Tel Aviv University, 6997801 Tel Aviv, Israel}
\email{borovoi@tauex.tau.ac.il}

\address{Department of Mathematics, University of British Columbia,
Vancouver, BC V6T 1Z2, Canada}
\email{reichst@math.ubc.ca}

\address{UMR 5208 du CNRS, Institut Camille Jordan, Universit\'e Claude Bernard Lyon 1,
43 boulevard du 11 novembre 1918, 69622 Villeurbanne cedex, France,
and Institute of Mathematics ``Simion Stoilow'' of the Romanian Academy,
21 Calea Grivitei Street, 010702 Bucharest, Romania}
\email{gille@math.univ-lyon1.fr}

\thanks{Mikhail Borovoi was partially supported
by the Israel Science Foundation (grant 1030/22).}

\thanks{Zinovy Reichstein was partially supported by
an Individual Discovery Grant  RGPIN-2023-03353 from the
National Sciences and Engineering Research Council of Canada.}

\thanks{Philippe Gille was  supported by the project ``Group schemes, root systems, and related representations" founded by the European Union - NextGenerationEU through Romania's National Recovery and Resilience Plan (PNRR) call no. PNRR-III-C9-2023-I8, Project CF159/31.07.2023, and coordinated by the Ministry of Research, Innovation and Digitalization (MCID) of Romania.}

\keywords{Galois cohomology, power map, period, index, reductive groups,
number fields, local fields,  global  fields}

\subjclass{
  11E72
, 20G10
, 20G20
, 20G25
, 20G30
}

\date{\today}

\begin{abstract}
For a connected reductive group $G$ over
a local or global  field $K$, we define
a {\em diamond} (or {\em power}) operation
$$(\xi,n)\mapsto \xi^{\di  n}\,\colon\, \Hon(K,G)\times \Z\to \Hon(K,G)$$
of raising to power $n$ in the Galois cohomology pointed set.
This operation is new when $K$ is a number field.
 We show that this power operation
has many good properties. When $G$ is a torus,
the set $\Hon(K,G)$ has a natural group structure, and $\xi^{\di n}$
then coincides with the $n$-th power of $\xi$ in this group.

On the other hand, we show that a power operation on $\Hon(K,G)$, functorial in $G$,
which we define over local and global fields,
cannot be defined for an arbitrary field $K$.
Our proof of this assertion relies on the results of  Appendix B written by Philippe Gille.

Using the power operation,
for a cohomology class $\xi$ in $\Hon(K,G)$ over local or global field,
 we  define the period  $\per(\xi)$
to be the least integer $n\ge 1$ such that $\xi^{\di n}=1$.
We define the index $\ind(\xi)$ to be the greatest common divisor of the degrees $[L:K]$
of finite extensions $L/K$ splitting $\xi$.
The period and index of a cohomology class generalize the period and index a central simple algebra over $K$.
For any  connected  reductive group $G$ defined over a local or global field $K$,
we show  that  $\per(\xi)$ divides $\ind(\xi)$
and that $\ind(\xi)$ may be strictly greater than
$\per(\xi)$, but they always  have the same prime factors.
\end{abstract}

\date{\today}

\maketitle

\tableofcontents

\section{Introduction}
\label{s:intro}

Let $G$ be a reductive algebraic group defined over a field $K$.
Here and throughout this paper we shall
follow the convention of SGA3 where reductive groups are assumed to be connected.
We are interested in the Galois cohomology set $\Hon(K, G)$.
Elements of this set can be defined in cohomological terms as
equivalence classes of cocycles,
or in geometric terms as isomorphism classes of $G$-torsors over $\Spec(K)$.
There is also an algebraic interpretation, which makes these sets particularly interesting:
if $A$ is a finite-dimensional vector space with a tensor over $K$
or a  quasi-projective variety with additional structure  over $K$,
and $G$ is the automorphism group of $A$ over an algebraic closure $\ov K$ of $K$,
then $\Hon(K, G)$ classifies $K$-forms of $A$,
that is, objects $B$ defined over $K$ such that $B$
becomes isomorphic to $A$ over a separable closure $K^s$ of $K$;
see Serre \cite[Section III.1]{Serre-GC}.
For example,
$\Hon(K, \operatorname{O}_n)$ classifies non-degenerate $n$-dimensional quadratic forms over $K$,
$\Hon(K, {\sf G}_2)$ classifies octonion algebras over $K$, and so on.
This point of view, pioneered by Serre and Springer in the early 1960s,
has been very successful in unifying previously disparate and ad-hoc constructions
in various branches of algebra
and in introducing cohomological machinery into the subject. A complicating feature of this approach,
reflecting intrinsic complexity of the subject, is that
$\Hon(K, G)$ is only a set with a
distinguished element (called the neutral element).
In general it does not admit a functorial group structure; see \cite{Borovoi-NG}.

In this paper  we attempt to define a power operation
$(\xi,n)\mapsto \xi^{\di n}$ on $\Hon(K, G)$.
A power operation is weaker than a group operation,
but if it can be defined, it will give us some measure of control over $\Hon(K, G)$.

\newcommand{\sG}{{\scriptscriptstyle{(G)}}}

\begin{question} \label{q:main}
Let $G$ be a reductive group defined over a field $K$.
Is there a functorial map
\begin{equation}\label{e:main}
(\xi,n)\mapsto \xi^{\di  n}\hs\colon\,\hs \Hon(K,G)\times \Z\,\to\, \Hon(K,G)
\end{equation}
with the following properties?

\begin{enumerate}
\item[\rm(i)] When $G$ is a torus, the map $\xi\mapsto\xi^{\di n}$
coincided with the natural power map  $\xi\mapsto \xi^n$
induced by the group structure of $\Hon(K, G)$;
\item[\rm(ii)] $(\xi^{\di m})^{\di n} = \xi^{\di mn}$ for every $m, n \in \Z$.
\end{enumerate}

Here ``functorial'' means ``functorial with respect to both homomorphisms $G \to G'$ of reductive $K$-groups
and finite separable extensions $K'/K$\,''.
\end{question}

Note that property (i) together with the functoriality condition
applied to the trivial torus $T=\{1\}\subseteq G$
implies that  $1_\sG^{\di n} = 1_\sG$
for any integer $n$, where $1_\sG$ denotes
the neutral cohomology class in $\Hon(K, G)$.

We also remark that there is a well-understood power map
in the special case where $G$ is the projective linear group $\PGL_n$\hs.
Indeed, the Galois cohomology set $\Hon(K, \PGL_n)$ can be identified with the set of isomorphism
classes of central simple algebras of degree $n$ over $K$, or equivalently, with the subset
of the Brauer group $\operatorname{Br}(K)$ consisting of classes of index dividing $n$.
Recall that the {\em index} of a central simple $K$-algebra $A$ is the  degree of $D$ over $K$,
where $D$ is a division algebra such that $A\simeq M_r(D)$ for some
positive integer $r$.
The set of classes of index dividing $n$ is not a subgroup of $\operatorname{Br}(K)$.
This is because the index of the tensor product $A \otimes_K B$ of central simple $K$-algebras
$A$ and $B$ may be  greater than the indices of $A$ and $B$. For an example of
a field $K$ (not local or global) and division algebras $A$, $B$ of degree (and hence, index) 2 over $K$
whose tensor product is a division algebra of degree (and hence, index) 4 over $K$, see
\cite[Theorem~15.7]{Pierce}.
On the other hand, this set of classes is closed under taking powers
(when the class of $A$ maps to the class of $A^{\otimes d}$).
This is because the index of a power $A^{\otimes d}$
always divides the index of $A$; see~\cite[Proposition 13.4(viii)]{Pierce}.
For background material on central simple algebras and Brauer classes
we refer the readers to \cite[Chapters 12-14]{Pierce}  and~\cite[Chapter 2]{gille-szamuely}.

The answer to Question~\ref{q:main} is trivially ``Yes'' when $K$ is a finite field,
because then by Lang's theorem \cite{lang-56} we have $\Hon(K,G)=1$ for all connected $K$-groups $G$.
Moreover, the answer is ``Yes" when $K$ is a non-archimedean local field,
or a  global  function field, or a totally imaginary number field,
because in these cases the pointed set $\Hon(K,G)$ has a canonical abelian group structure;
see \cite[Theorem 5.5.2(1) and Corollary 5.5.7(3)]{BK}.
Furthermore, this group structure is, in fact, functorial in $G$ and $K$.
See also Labesse \cite[Section 1.6]{Labesse}, where a generalization to {\em quasi-connected} reductive groups is considered,
as well as Kottwitz~\cite[Proposition 6.4]{Kottwitz-Duke} and Gonz\'alez-Avil\'es~\cite[Theorem 5.8(i)]{GA},
where the corresponding abelian group structures are constructed in (different) non-functorial ways.

When $K=\R$ or $K$ is a number field admitting a real embedding, it is impossible to define a group structure
on $\Hon(K, G)$ for all reductive groups $G$ in a functorial way; see  \cite{Borovoi-NG}.
Nevertheless, we prove the following.

\begin{theorem} \label{thm.main1}
For $K=\R$ or any number field $K$,
there exists a unique power operation~\eqref{e:main} over $K$
satisfying the requirements  of Question \ref{q:main}.
\end{theorem}

Thus, it is possible to define a power operation when $K$ is a local or global field.
On the other hand, it is impossible to define a functorial in $G$ power operation
over an arbitrary field:

\begin{theorem}\label{t:non-ex}
There exists a field $K$ and an integer $n$ for which there is no functorial in $G$ power operation
\[\di n\colon \Hon(K,G)\to \Hon(K,G)\]
defined for all reductive $K$-groups $G$ and  having  property (i)
of Question \ref{q:main}.
\end{theorem}

In Section \ref{s:number} we shall deduce this theorem from Proposition~\ref{p:Gille}
in Appendix \ref{app:Gille} written by Philippe Gille.

\begin{remark}
The argument in Appendix B shows that the field $K$ in Theorem \ref{t:non-ex} can be chosen to be of the form
$K=L((z))$, where $L$ is some finite extension of the field
of iterated formal Laurent series $\C((x))((y))$ such that the degree $[L: \C((x))((y))$ is prime to $5$.
\end{remark}

When $K$ is a local or global  field, we use the power operation to
define the {\em period} $\per(\xi)$
of an element of $\xi \in \Hon(K, G)$ to be
the greatest common divisor of the integers $n$ such that $\xi^{\di n} = 1$.
By Corollary \ref{c:xi^per}, the period $\per(\xi)$ is the least integer $n\ge1$ such that $\xi^{\di n}=1$.

Over an arbitrary field $K$, for $\xi\in \Hon(K,G)$
we say that a field extension $L/K$ {\em splits} $\xi$
if the restriction $\Res_{L/K}(\xi)$ equals $1$.
We define the {\em index} $\ind(\xi)$
to be the greatest common divisor of the degrees $[L: K]$
where $L/K$ ranges over all finite field extensions splitting $\xi$.

In the special case where $G = \PGL_n$\hs,
the above notions of the period and index reduce to the period and index of
a central simple algebra; see Example~\ref{ex.PGLn-b}.
(Note that the term ``exponent" is sometimes used synonymously with ``period".)

By Brauer's theorem, see  \cite[Theorem 2.8.7.1]{gille-szamuely}, for any field $K$ and
any $\xi \in \Hon(K,\PGL_n)$,
the period $\per(\xi)$ divides the index $\ind(\xi)$.
We prove a similar assertion for an arbitrary reductive group $G$,
but only over a local or global  field $K$,
because we  define the period  of $\xi \in \Hon(K, G)$
only when $K$ is a local or global field.

\begin{theorem}[Theorem \ref{t:divides}]
\label{thm:divides}
Let $G$ be a reductive group over a local or global field $K$,
and let $\xi \in \Hon(K, G)$. Then $\per(\xi)$ divides $\ind(\xi)$.
\end{theorem}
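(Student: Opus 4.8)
The statement to prove is that $\per(\xi)$ divides $\ind(\xi)$ for $\xi \in \Hon(K,G)$ over a local or global field $K$. Write $d = \ind(\xi)$ and $e = \per(\xi)$. By definition of the index, $d$ is the gcd of the degrees $[L:K]$ over finite separable extensions $L/K$ that split $\xi$. My first step would be to show that it suffices to treat a single splitting field: if $L/K$ splits $\xi$, then I claim $\per(\xi)$ divides $[L:K]$, and then taking the gcd over all such $L$ gives $e \mid d$. (This reduction uses that divisibility is preserved under gcd: if $e$ divides each $[L:K]$, it divides their gcd.)

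So fix a finite separable extension $L/K$ with $\Res_{L/K}(\xi) = 1$, and set $m = [L:K]$. The goal is $\xi^{\di m} = 1$, equivalently $e \mid m$ by Corollary~\ref{c:xi^per}. The natural tool is the corestriction (transfer) map $\Cor_{L/K}\colon \Hon(L,G_L) \to \Hon(K,G)$ together with the composition formula $\Cor_{L/K} \circ \Res_{L/K} = [\text{multiplication by } m]$ on the abelian group $\Hon(K,G)$ — but of course $\Hon(K,G)$ is not a group in general, which is exactly the obstacle the paper's power operation is designed to circumvent. The key point should be that the power operation $\xi \mapsto \xi^{\di m}$ is the functorial replacement for ``multiply by $m$'': since the construction of $\xi^{\di m}$ is (by Theorem~\ref{thm.main1} and the functoriality built into Question~\ref{q:main}) compatible with restriction and corestriction, one should have an identity of the shape
\[
\xi^{\di m} \;=\; \Cor_{L/K}\bigl(\Res_{L/K}(\xi)\bigr)
\]
or at least $\xi^{\di m} = \Cor_{L/K}(\eta)$ for some $\eta$ restricting to $\Res_{L/K}(\xi)$, holding in $\Hon(K,G)$. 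Granting such a compatibility, since $\Res_{L/K}(\xi) = 1$ and corestriction sends the neutral element to the neutral element, we get $\xi^{\di m} = 1$, hence $e \mid m$, and taking gcd over all splitting $L$ yields $\per(\xi) \mid \ind(\xi)$.

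**Where the work is.** The crux is establishing the displayed corestriction–restriction identity for the diamond operation. For a torus $T$ this is the classical formula $\Cor \circ \Res = m$ in $\Hon(K,T)$, and $\xi^{\di m}$ is by definition $m\xi$ there, so the identity holds on the nose. For general reductive $G$, the power operation is built (per the paper's Construction~\ref{con:power}, referenced in the excerpt) by reducing to the torus case via functoriality — typically through an abelianization map $\Hon(K,G) \to \Ho^1_{\mathrm{ab}}(K,G)$ or through a flasque/quasi-split resolution $1 \to T' \to G' \to G \to 1$ — so I would prove the corestriction compatibility by checking it is respected at each stage of that construction: corestriction commutes with the connecting/abelianization maps (a standard fact for $\Ho^1_{\mathrm{ab}}$ in the sense of Borovoi–Kottwitz, or provable via the relevant long exact sequences and the projection formula), and the torus case supplies the base. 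One subtlety I would watch for: over $\R$ the power operation is the idempotent-ish rule stated in the commented-out passage ($\xi^{\di n} = \xi$ for $n$ odd, $1$ for $n$ even), and the local/global statement ultimately reduces to the archimedean and number-field cases; but the corestriction argument is cleanest over number fields and I would need to confirm it descends correctly, possibly handling the real-place contribution separately using that a splitting field $L$ of even degree over a real place forces the local class to die. The genuinely hard part will be verifying that $\Cor_{L/K}$ interacts with the power operation as expected without a group structure to lean on — i.e., pinning down exactly which corestriction-type map makes the identity $\xi^{\di m} = \Cor_{L/K}(\cdots)$ true and proving it functorially — rather than the gcd bookkeeping, which is routine.
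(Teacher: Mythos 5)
Your overall strategy is the paper's: fix a finite separable splitting field $L/K$ of degree $m$, show $\xi^{\di m}=1$, and finish with the gcd bookkeeping (that part is indeed routine). But the step you single out as the crux --- an identity $\xi^{\di m}=\Cor_{L/K}\bigl(\Res_{L/K}(\xi)\bigr)$ holding in $\Hon(K,G)$ itself --- is both unavailable and unnecessary: there is no corestriction map on nonabelian $\Hon$, and the paper never constructs one. What makes the argument go is that, by Construction~\ref{con:power} and the Cartesian square of Theorem~\ref{t:cd}, the class $\xi^{\di m}$ is \emph{defined} as the unique element with $\ab(\xi^{\di m})=\ab(\xi)^m$ and $\loc_\infty(\xi^{\di m})=(\loc_\infty\xi)^{\dit m}$, so proving $\xi^{\di m}=1$ reduces to two coordinate computations. (i) $\ab(\xi)^m=1$: this follows from $\Cor_{L/K}\circ\Res_{L/K}=(x\mapsto x^m)$ applied on the honest abelian group $\Ho^1_\ab$, where corestriction does exist for hypercohomology, together with $\Res_{L/K}(\ab\,\xi)=\ab(\Res_{L/K}\xi)=1$. (ii) $(\loc_\infty\xi)^{\dit m}=1$: here your parity remark is backwards. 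When $m$ is \emph{even}, nothing about splitting is needed, since $x^{\dit m}=1$ for every $x$ in a pointed set; when $m$ is \emph{odd}, the local degrees $[L_w:K_v]$ over a real place $v$ sum to the odd number $m$, so at least one $w$ is real, $L_w=K_v$, and the splitting hypothesis forces $\loc_v\xi=1$. With these two corrections your sketch becomes the paper's proof; the non-archimedean local and function-field cases are the degenerate subcase where only coordinate (i) occurs (there $\ab$ is bijective), and $K=\R$ is handled directly by parity.
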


Let $G$ be a semisimple
group defined over a local or  global   field $K$. We say that
$G$ has {\em the period $2$ property} if $\per(\xi) = 2$ for every $\xi\neq 1$ in  $\Hon(K, G)$.

\begin{theorem}[Theorem \ref{t:2-property}]
\label{t:2-property-intro}
Let $K$ be a  {\emm global}  field, and let $G$ be a semisimple $K$-group.
Then $G$ has the period 2 property if and only if $2\cdot \X^*(\mu)=0$,
where $\X^*$ denotes the character group, $\mu=\ker [G^\ssc\to G]$,
and $G^\ssc$ is the universal cover of $G$.
\end{theorem}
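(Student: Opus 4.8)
The plan is to reduce everything to abelian Galois cohomology through the abelianization map. Write $\rho\colon G^\ssc\to G$ and $\mu=\ker\rho$. Since $G$ is semisimple, the complex $[Z(G^\ssc)\to Z(G)]$ in degrees $-1,0$ defining $\Ho^1_{\ab}(K,G)$ has surjective differential with kernel $\mu$, hence is quasi-isomorphic to $\mu$ placed in degree $-1$; this yields a canonical isomorphism $\Ho^1_{\ab}(K,G)\cong\Ho^2(K,\mu)$ of abelian groups, under which $\ab^1\colon\Hon(K,G)\to\Ho^1_{\ab}(K,G)$ becomes the connecting map $\delta$ of $1\to\mu\to G^\ssc\to G\to1$. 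I will use three facts, valid over any global field $K$ (for the function-field case, see \cite{BK}): (i) $\ab^1$ is surjective; (ii) the power operation satisfies $\ab^1(\xi^{\di n})=n\cdot\ab^1(\xi)$ (a compatibility built into its construction, true on tori and over $\R$); and (iii) the triviality criterion: $\xi\in\Hon(K,G)$ is trivial if and only if $\ab^1(\xi)=0$ and $\Res_{K_v/K}(\xi)=1$ for every real place $v$. (For (iii): $\ab^1(\xi)=0$ forces $\xi=\rho_*(\eta)$ with $\eta\in\Hon(K,G^\ssc)$, so $\xi$ is locally trivial at all non-real places, where $\Hon(K_v,G^\ssc)=1$; the remaining input is the Hasse principle $\Sha^1(K,G)\cong\Sha^1_{\ab}(K,G)$.)

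\emph{Sufficiency.} Suppose $2\cdot\X^*(\mu)=0$; then $\mu$ is killed by $2$, so $\Ho^2(K,\mu)\cong\Ho^1_{\ab}(K,G)$ is killed by $2$. Take $\xi\neq1$ in $\Hon(K,G)$. By (ii), $\ab^1(\xi^{\di 2})=2\cdot\ab^1(\xi)=0$; and for every real place $v$, $\Res_{K_v/K}(\xi^{\di 2})=(\Res_{K_v/K}\xi)^{\di 2}=1$, since over $\R$ the power operation kills every class in even degree. By (iii), $\xi^{\di 2}=1$, so by Corollary~\ref{c:xi^per} the period $\per(\xi)=\min\{n\ge1:\xi^{\di n}=1\}$ lies in $\{1,2\}$; and $\per(\xi)\neq1$ since $\xi\neq1$. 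Hence $\per(\xi)=2$, so $G$ has the period $2$ property.

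\emph{Necessity.} Suppose $2\cdot\X^*(\mu)\neq0$, and let $m$ be the exponent of the finite abelian group $\X^*(\mu)$, so $m\nmid 2$. It suffices to produce $\zeta\in\Ho^2(K,\mu)$ of order not dividing $2$: lifting $\zeta$ along the surjection $\ab^1$ gives $\xi\in\Hon(K,G)$ with $\ab^1(\xi)=\zeta$, and whenever $\xi^{\di n}=1$ we get $n\zeta=\ab^1(\xi^{\di n})=0$, so $\ord(\zeta)\mid\per(\xi)$; hence $\per(\xi)\notin\{1,2\}$ while $\xi\neq1$, and $G$ lacks the period $2$ property. To build $\zeta$: choose a finite Galois extension $L/K$ splitting $\mu$, and use the Chebotarev density theorem to pick two distinct non-archimedean places $v_1,v_2$ of $K$ splitting completely in $L$. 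Then $\Gal_{K_{v_i}}$ acts trivially on $\X^*(\mu)$, so by local duality $\Ho^2(K_{v_i},\mu)\cong\X^*(\mu)^\vee:=\Hom(\X^*(\mu),\Q/\Z)$. In the Poitou--Tate exact sequence, the last arrow $\bigoplus_v\Ho^2(K_v,\mu)\to\Ho^0(K,\mu^D)^\vee$ (with $\mu^D$ the Cartier dual of $\mu$) is, on each $v_i$-component, the transpose of the inclusion $\X^*(\mu)^{\Gal_K}\hookrightarrow\X^*(\mu)$, i.e.\ restriction of functionals. Choose $\psi\in\X^*(\mu)^\vee$ of order $m$ (possible since $\X^*(\mu)^\vee\cong\X^*(\mu)$). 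The family with $\psi$ at $v_1$, $-\psi$ at $v_2$, and $0$ elsewhere maps to $0$ in $\Ho^0(K,\mu^D)^\vee$, hence by exactness is the image of some $\zeta\in\Ho^2(K,\mu)$; since $\zeta$ restricts to $\psi$ at $v_1$, its order is divisible by $m$ and so does not divide $2$.

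The main difficulty is not any single step but the correct handling of the triviality criterion (iii) and, especially, its function-field analogue: over a function field of characteristic $p$ the group $\mu$ may fail to be smooth, so one must work with fppf cohomology throughout and invoke the corresponding Hasse-principle and Poitou--Tate statements, citing \cite{BK}. A secondary point needing care is the compatibility (ii): although it follows from the construction of the power operation, it should be recorded as a separate lemma before being used here. Once the isomorphism $\Ho^1_{\ab}(K,G)\cong\Ho^2(K,\mu)$, the surjectivity of $\ab^1$, and (iii) are available, both implications above are essentially formal.
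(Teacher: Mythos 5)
Your proof is correct, but the ``only if'' direction takes a genuinely different route from the paper. For sufficiency you argue exactly as the paper does in spirit: everything in $\Ho^1_\ab(K,G)\cong\Ho^2_{\rm fppf}(K,\mu)$ is killed by $2$ and everything at the real places is killed by $\di 2$, so the Cartesian-square/triviality criterion forces $\xi^{\di 2}=1$; the paper phrases this through the combinatorial model $\big(M[\V_L]_0\big)_\Gt$ of Theorem~\ref{t:explicit} rather than through $\Ho^2(K,\mu)$, but the content is the same. For necessity the paper localizes: by Chebotarev it finds a finite place $v$ where $G$ becomes an inner form of a split group, uses the explicit bijection $\Hon(K_v,G)\isoto M_\Gt\simeq M$ to exhibit a local class with $\xi_v^{\di 2}\neq 1$, and then globalizes via surjectivity of $\loc_v$ (Kottwitz, Th\v{a}\'{n}g). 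You instead stay entirely in abelian cohomology: you build a class $\zeta\in\Ho^2(K,\mu)$ of order not dividing $2$ by prescribing dual local components $\psi,-\psi$ at two places split in the splitting field of $\mu$ and invoking the tail of the Poitou--Tate sequence, then lift $\zeta$ through the surjection $\ab$. Both arguments are sound; yours avoids the explicit description of $\Hon(K,G)$ from \cite{BK} and the surjectivity of $\loc_v$ on nonabelian $\Hon$, but it buys this at the price of a heavier external input, namely Poitou--Tate duality for the finite flat (possibly non-smooth) group scheme $\mu$ in fppf cohomology over a global function field --- this is available (e.g.\ Milne's \emph{Arithmetic Duality Theorems}, Ch.~III), but it is not in \cite{BK}, so your citation there should be corrected. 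Two further small points: your criterion (ii) is exactly Proposition~\ref{p:ab-n}(iv) of the paper, and your criterion (iii) is most cleanly justified as the injectivity half of the Cartesian square (Theorem~\ref{t:cd}) for number fields together with bijectivity of $\ab$ for function fields, rather than through the $\rho_*$/Hasse-principle detour you sketch (which also works, but needs $\Hon(K_v,G^\ssc)=1$ at all non-archimedean places, i.e.\ Kneser--Bruhat--Tits--Harder).
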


Note that  the ``only if'' assertion of Theorem \ref{t:2-property-intro}, stated for global fields,
fails for non-archimedean local fields; see Remark \ref{r:2-property}.

To motivate the next result of this paper,  we recall the following.

\begin{proposition}
\label{prop.ABHN}
Let $K$ be a local or global field.  Then the period of any Brauer class in $\Br(K)$ equals its index.
\end{proposition}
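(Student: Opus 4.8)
The plan is to reduce to the two cases of local and global fields separately, using the explicit description of the Brauer group in each case. First, in the local case: for a non-archimedean local field $K$, the invariant map gives an isomorphism $\inv\colon \Br(K)\isoto \Q/\Z$, and a class $\alpha$ of order $n$ corresponds to an element of the form $a/n$ with $\gcd(a,n)=1$. Then $\per(\alpha)=n$ by definition, and the standard fact that there is a cyclic division algebra of degree $n$ with that invariant (a cyclic extension $L/K$ of degree $n$ splits $\alpha$, since restriction to $L$ multiplies the invariant by $n$) shows $\ind(\alpha)\mid n$; combined with the elementary inequality $\per(\alpha)\mid\ind(\alpha)$ (Brauer's theorem, or Theorem~\ref{thm:divides} applied to $G=\PGL_n$), we get $\per(\alpha)=\ind(\alpha)=n$. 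For $K=\R$ one checks directly: $\Br(\R)=\Z/2$, the nontrivial class is that of the Hamilton quaternions, which has period $2$ and index $2$. For $K=\C$ the Brauer group is trivial and there is nothing to prove.

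Next, the global case. Here I would invoke the Albert--Brauer--Hasse--Noether theorem: the sequence
\[
0\to \Br(K)\to \bigoplus_v \Br(K_v)\to \Q/\Z\to 0
\]
is exact, where $v$ runs over all places of $K$. Given $\alpha\in\Br(K)$, its period $n$ equals the least common multiple of the local periods $\per(\alpha_v)=\ind(\alpha_v)$ over all $v$ (using the local case just established, and noting that $\alpha_v=0$ for all but finitely many $v$). On the other hand, the Grunwald--Wang theorem (or, more precisely, its consequence on prescribed local behaviour of cyclic extensions) produces a cyclic field extension $L/K$ of degree exactly $n$ whose local degrees $[L_w:K_v]$ are divisible by the local indices $\ind(\alpha_v)$ at every place $v$; then $L$ splits $\alpha_v$ for every $v$, hence splits $\alpha$ by the injectivity in the ABHN sequence, so $\ind(\alpha)\mid [L:K]=n=\per(\alpha)$. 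Combined again with $\per(\alpha)\mid\ind(\alpha)$, we conclude $\per(\alpha)=\ind(\alpha)$.

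The main obstacle is the global case, and specifically the construction of a degree-$n$ splitting field with prescribed local degrees. This is exactly where the Grunwald--Wang theorem enters, and one must be slightly careful about the well-known ``special case'' obstruction in Grunwald--Wang (involving $2$-power roots of unity); however, since the local indices $\ind(\alpha_v)$ are prescribed as divisors of $n$ and one is free to choose $L$ globally cyclic of degree $n$, the standard statement (see, e.g., \cite[Chapter X]{gille-szamuely}, or the account of ABHN in \cite[\S 18]{Pierce}) suffices, because the prescription can be met by a cyclic extension whose degree is precisely the relevant $n$. Alternatively, this proposition is classical and can simply be cited from \cite{Pierce} or \cite{gille-szamuely}; the role of the present paper is that it serves as the model case motivating the general reductive-group statement, so a short proof or a precise citation is all that is needed here.
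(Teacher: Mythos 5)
Your proposal is correct and takes essentially the same route as the paper, which does not prove Proposition \ref{prop.ABHN} but simply cites the classical literature: \cite{Serre-LF} for the non-archimedean local case, and \cite{Pierce}, \cite{Roquette}, \cite{eisentrager} for the global case, the archimedean case being trivial. Your sketch correctly identifies the standard ingredients---the invariant map locally, Albert--Brauer--Hasse--Noether plus Grunwald--Wang globally---and appropriately flags the Grunwald--Wang special-case subtlety, which is exactly what the cited references handle in detail.
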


When $K$ is an archimedean local field, that is,  $\R$ or $\C$, this is obvious. In the case where
$K$ is a non-archimedean local field, Proposition~\ref{prop.ABHN} is proved
in \cite[Section XIII.3,  Corollary 1 of Proposition 7]{Serre-LF}.
In the case where $K$ is a global field, Proposition~\ref{prop.ABHN} is a consequence of the celebrated
theorems of Albert-Brauer-Hasse-Noether and of Grunwald-Wang.
For a proof, see~\cite[Section 18.6]{Pierce} or \cite[Section 5.4.4]{Roquette}
in the case where $K$ is a number field,
and \cite[Theorem 3.6]{eisentrager} when $K$ is an arbitrary global field.

We shall prove a generalization of Proposition~\ref{prop.ABHN} with $\PGL_n$ replaced by
a reductive $K$-group $G$. Let $\pi_1^\alg(G)$ denote the algebraic fundamental group
of $G$ of \cite[Section 1]{Borovoi-Memoir}.
This is a finitely generated abelian group endowed with a natural action of
the absolute Galois group $\Gal(K^s/K)$;
see Section \ref{s:abelian} for the definition of $\pi_1^\alg(G)$.

\begin{theorem}[Theorem~\ref{t:split}]
\label{thm.main2}
Let $G$ be a reductive group over a local or global field $K$ for which
the algebraic fundamental group $M=\pi_1^\alg(G)$ is {\emm split},
that is, the absolute Galois group $\Gal(K^s/K)$ acts on $M$ trivially.
Then $\per(\xi)=\ind(\xi)$ for any $\xi\in \Hon(K,G)$.
\end{theorem}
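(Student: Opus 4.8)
The plan is to reduce the statement to Theorem~\ref{thm:divides} (which already gives $\per(\xi) \mid \ind(\xi)$) plus the reverse divisibility $\ind(\xi) \mid \per(\xi)$, and to prove the latter by constructing, for a given $\xi \in \Hon(K,G)$ with period $n$, a separable field extension $L/K$ of degree $n$ that splits $\xi$. The key leverage is that when $M = \pi_1^\alg(G)$ is split, the abelianization map $\ab^1 \colon \Hon(K,G) \to \Ho^1_{\ab}(K,G)$ has a target that is well-understood: by the theory of the algebraic fundamental group (see Section~\ref{s:abelian} and \cite{Borovoi-Memoir}), $\Ho^1_{\ab}(K,G)$ is a Galois cohomology group of the complex $M$ concentrated in a single degree with trivial Galois action, so it is literally $\Ho^1(K,M)$ for the constant module $M$; and over a local or global field this is computed by class field theory. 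In particular the power operation $\xi^{\di n}$ is defined so that $\ab^1(\xi^{\di n}) = n\cdot\ab^1(\xi)$, so $\per(\xi)$ is controlled by the order of $\ab^1(\xi)$ in $\Ho^1(K,M)$. First I would establish that under the split hypothesis, $\per(\xi)$ equals the order of $\ab^1(\xi)$: one direction is immediate from the definition of the power operation and functoriality; for the other, if $n\cdot\ab^1(\xi)=0$ then $\ab^1(\xi^{\di n})=0$, and one invokes the injectivity/kernel results for $\ab^1$ over local and global fields (the kernel of $\ab^1$ is controlled by $\Hon(K,G^\ssc)$, which vanishes over totally imaginary number fields and nonarchimedean local fields, and over number fields with real places is handled via the archimedean analysis already in the paper).

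Next, to produce the splitting field, I would work with a maximal torus. Choose a maximal $K$-torus $T \subseteq G$ (twisting $G$ by a cocycle representing $\xi$ if necessary so that $\xi$ comes from $\Hon(K,T)$; alternatively use that every class in $\Hon(K,G)$ over a local or global field lifts to a maximal torus, a standard fact — e.g.\ via \cite{BK} or the surjectivity of $\Hon(K,T)\to\Hon(K,G)$ for suitable $T$). Then $\ind(\xi)$ divides $\ind$ of the corresponding torus class, and a class in $\Hon(K,T)=\Ho^1(K,T)$ is split by $L$ iff it dies under restriction. The order of a class in $\Ho^1(K,T)$ and the minimal degree of a splitting extension coincide for local and global fields: this is exactly Proposition~\ref{prop.ABHN} when $T=\mathbf{G}_m$-related, and in general follows from the Poitou–Tate/class field theory description of $\Ho^1(K,T)$ together with a Grunwald–Wang argument allowing one to realize a cyclic (or metacyclic) extension of the prescribed local degrees. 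The split hypothesis on $M$ is what guarantees that no "twisting" obstruction intervenes between the order of $\xi$ in $\Hon(K,G)$ and the order of its image in the torus/abelian picture — concretely, $\pi_1^\alg(T) \to \pi_1^\alg(G) = M$ and the triviality of the Galois action on $M$ forces the relevant cohomological periods to match.

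The main obstacle I anticipate is the Grunwald–Wang step: over a number field one must produce a degree-$n$ extension with prescribed completions at the finitely many places where $\xi$ is locally nontrivial, and the classical Grunwald–Wang theorem has the well-known special-case failure at the prime $2$ (the Wang counterexample). One must check that the split hypothesis on $M$, combined with $\per(\xi)=n$, puts us outside the special case, or else handle the $2$-primary part separately — possibly by passing to the decomposition $\Ho^1(K,M) = \bigoplus_p \Ho^1(K,M)\{p\}$ and treating each prime $p$ individually, using that $\ind$ and $\per$ are both multiplicative over the prime decomposition (which, for $\ind$, is essentially the content of Theorem~\ref{thm:divides} combined with the "same prime factors" statement from the abstract). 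A secondary technical point is verifying that $\ind(\xi)$ computed in $\Hon(K,G)$ really does descend to the torus computation, i.e.\ that lifting $\xi$ to a maximal torus does not increase the index; this should follow from choosing $T$ appropriately (a fundamental or suitably generic maximal torus) so that $\X^*(\pi_1^\alg(T))$ surjects onto $\X^*(M)$ with the trivial action preserved, making the torus-level splitting field a splitting field for $\xi$ of the same degree.
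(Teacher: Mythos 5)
Your proposal diverges substantially from the paper's proof and contains a gap that I don't think your plan resolves.

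The paper's argument is structured around $\Sha$-vanishing, not around lifting to a maximal torus. It first shows (Theorem~\ref{t:Sha}) that if $\xi^{\di n}=1$, one can find a cyclic extension $L/K$ of degree $n\cdot\cp(n,\#\Theta_K)$ such that $\Res_{L/K}(\xi)$ lies in $\Sha^1(L,G)$: the construction uses the local period-index result (Corollary~\ref{c:n[a]-l}) at each of the finitely many places where $\xi$ is locally nontrivial, then glues via Artin--Tate \cite[Theorem X.6]{AT}, which produces a cyclic extension of a prescribed degree with prescribed \emph{local degrees} at finitely many places. Because one is only prescribing local degrees rather than local completions, the Grunwald--Wang special case at $2$ that you worry about simply does not arise. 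Then, because $\Theta_K=\{1\}$ is in particular Sylow-cyclic, $\Sha^1(L,G)=1$ by \cite[Corollary 8.4.5(1)]{BK}, so local splitting is global splitting. The split hypothesis enters precisely through $\cp(n,\#\Theta_K)=1$, giving the degree-$n$ bound.

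Your route via lifting $\xi$ to a maximal torus $T$ has a genuine hole that you flag but do not close. Lemma~\ref{t:Bor98-T} does give a lift $\xi_T\in\Hon(K,T)$ with $i_*(\xi_T)=\xi$, but $\per(\xi_T)$ and $\ind(\xi_T)$ can both be strictly larger than $\per(\xi)$ and $\ind(\xi)$, and nothing guarantees they match: an extension splitting $\xi_T$ splits $\xi$, but not conversely. Your proposed fix --- choose a ``fundamental or suitably generic'' torus so that the periods match --- is not an argument, and there is a conceptual obstruction: even when $M=\pi_1^\alg(G)$ is split, a maximal $K$-torus $T\subseteq G$ need not have split cocharacter lattice (e.g.\ $\SO_n$ over $\R$, see Remark~\ref{r:split}), so $\X_*(T)$ carries a nontrivial Galois action and $\Hon(K,T)$ is not controlled by the split hypothesis. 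Relatedly, your assertion that $\per(\xi)$ equals the order of $\ab(\xi)$ is not quite right over a number field with real places: by the Cartesian square of Theorem~\ref{t:cd}, $\xi^{\di n}=1$ requires both $\ab(\xi)^n=1$ and $(\loc_\infty\xi)^{\dit n}=1$, and the latter is an extra parity condition when $n$ is odd. The paper's route through $\Sha$ sidesteps all of this and is the idea you are missing.
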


\smallskip
\begin{example}
The algebraic $K$-group $\PGL_n$ is split,
and therefore its algebraic fundamental group $\pi_1^\alg(\PGL_n)$  is split;
see Remark \ref{r:split}.
Namely, $\pi_1^\alg(\PGL_n)\cong\Z/n\Z$ (with trivial Galois action).
Thus Theorem~\ref{thm.main2} is indeed a generalization of Proposition~\ref{prop.ABHN}.
\end{example}

Without the splitting assumption on the algebraic fundamental group,
Theorem~\ref{thm.main2} may fail.

\begin{theorem}[Theorems \ref{thm.main3-a} and \ref{thm.main3-b}
in Appendix~\ref{sect.example-local}]
\label{thm.main3}
Let $K$ be
\begin{enumerate}
\item[\rm (a)] a non-archimedean local field of residue characteristic not $2$, or
\item[\rm (b)] a global field of characteristic not 2.
\end{enumerate}
Assume that $K$  contains $\sqrt{-1}$.
Then there exist a $6$-dimensional $K$-torus $T$
and a cohomology class $\xi\in \Hon(K,T)$ such that
$ \per(\xi)=2$, but $4\hs\hs|\ind(\xi)$.
\end{theorem}

Now recall that for an arbitrary field $K$, elements $\xi \in \Hon(K, \PGL_n)$
are in functorial bijective correspondence with isomorphism classes
of central simple algebras of degree $n$ over $K$.
As mentioned above, $\ind(\xi)$ and $\per(\xi)$
are the index and the period of a central simple algebra corresponding to $\xi$.
By Brauer's theorem~\cite[Theorem 2.8.7.2]{gille-szamuely},
the integers $\ind(\xi)$ and $\per(\xi)$ have the same prime factors.
Therefore, $\ind(\xi)$ divides $\per(\xi)^d$ for some
positive integer $d$.
We prove a similar assertion for any reductive group $G$,
but once again, only over a local or global field $K$,
because we only {define} the period in this setting.

\begin{theorem}[Theorems \ref{t:n[a]-l} and \ref{t:ind-per-d}]
\label{thm.main4}
Let $G$ be a reductive group over a local or global field $K$.
Then for every $\xi \in \Hon(K, G)$, the index  $\ind(\xi)$
divides $\per(\xi)^d$ for some positive integer $d$.
\end{theorem}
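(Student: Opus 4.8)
The plan is to reduce the general reductive case to the case of a torus, where the statement can be checked by a direct Galois-cohomological argument. First I would use the abelianization map $\Hon(K,G)\to\Ho^1_{\ab}(K,G)=\Ho^1(K,\pi_1^{\alg}(G))$ (in the hypercohomology sense of the crossed module $G^{\ssc}\to G$), whose fibers are controlled by $\Hon(K,G^{\ssc})$. Over a local or global field, $\Hon(K,G^{\ssc})$ is essentially trivial (by Kneser, Harder, and Chernousov for the local case, and by the Hasse principle together with the triviality over all completions in the global case, the only obstruction coming from the archimedean real places), so that the period and the index of $\xi$ are determined — up to a bounded, explicitly controllable $2$-primary error coming from real places — by the period and index of its image $\xi^{\ab}$ in an abelian group of the form $\Ho^1(K,M)$ with $M=\pi_1^{\alg}(G)$ a finitely generated Galois module. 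The key compatibility here is that the diamond operation is functorial and agrees with the genuine power map after abelianization, which is exactly the content of the construction of $\xi^{\di n}$ earlier in the paper; this lets me transfer $\per(\xi)=\per(\xi^{\ab})$ (up to the prime $2$) and reduce to bounding $\ind$ in terms of $\per$ for classes in $\Ho^1(K,M)$.

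Next, for the abelian/torus case, I would argue as follows. Choosing a finite Galois extension $E/K$ splitting $M$, a class $\eta\in\Ho^1(K,M)$ of period $m$ is killed by $\Res_{E/K}$, so $[E:K]\cdot\eta$ may be controlled via a corestriction–restriction argument: $\Cor_{E/K}\circ\Res_{E/K}=[E:K]$ kills nothing new, but the point is that $\eta$ comes from $\Ho^1(\Gal(E/K),M^{\Gal(K^s/E)})$, a finite group whose exponent has the same prime factors as... — more precisely, I would work prime by prime. Fix a prime $\ell$; the $\ell$-primary part $\eta_\ell$ has period an $\ell$-power $\ell^a$. I want to produce a separable extension $L/K$ of $\ell$-power degree splitting $\eta_\ell$. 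For this I would pass to a pro-$\ell$ situation: let $K_\ell$ be the fixed field of an $\ell$-Sylow subgroup of $\Gal(K^s/K)$; then $\Res_{K_\ell/K}$ is injective on $\ell$-primary torsion (again corestriction–restriction, since $[K_\ell:K]$ is prime to $\ell$), so it suffices to split $\Res_{K_\ell/K}(\eta_\ell)$ by a finite $\ell$-power-degree extension of $K_\ell$ and then note that any such extension, being already of $\ell$-power degree over $K_\ell\subset$ a tower with prime-to-$\ell$ steps, contributes only $\ell$-powers to the relevant gcd. Over $K_\ell$, whose absolute Galois group is a pro-$\ell$ group, standard arguments (finiteness of the relevant cohomology over local/global fields together with the structure of $\Ho^1$ as a direct limit over open subgroups) give that a class of period $\ell^a$ is split by an extension whose degree is a bounded power of $\ell$; collecting the primes dividing $\per(\xi)$ and multiplying, I get $\ind(\xi)\mid\per(\xi)^d$ for suitable $d$.

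Finally, I would reassemble: by Theorem~\ref{thm:divides} we already know $\per(\xi)\mid\ind(\xi)$, so it only remains to show every prime dividing $\ind(\xi)$ divides $\per(\xi)$, and then that the multiplicity is bounded — the latter is automatic once we have a uniform $d$ from the argument above (for instance $d$ depending only on the $K$-group $G$, via the rank of $M$ and $[E:K]$). The main obstacle I anticipate is the passage from $\Hon(K,G)$ to $\Hon(K,M)$ at the level of \emph{index} rather than period: the abelianization map need not be surjective on splitting fields, i.e.\ a field splitting $\xi^{\ab}$ need not split $\xi$, because the fiber over $\xi^{\ab}$ can be nontrivial at the real places. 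Handling this requires the input, available over local and global fields, that $\Hon(L,G^{\ssc})$ and the real-place obstructions are controlled by extensions of $2$-power degree (one can always split the $G^{\ssc}$-part by adjoining square roots locally and globalizing via weak approximation / the Grunwald–Wang theorem), so that the discrepancy between $\ind(\xi)$ and $\ind(\xi^{\ab})$ is purely $2$-primary and bounded — which is harmless for a statement about prime factors and a power bound. This is presumably where the paper's Theorems~\ref{t:n[a]-l} and~\ref{t:ind-per-d} do the careful bookkeeping, and I would follow that route.
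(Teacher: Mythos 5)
Your overall strategy --- reduce to the abelianization via the Cartesian square and the near-triviality of $\Hon(K,G^\ssc)$, then handle the abelian class prime by prime using the fixed field $K_\ell$ of a Sylow pro-$\ell$ subgroup --- is genuinely different from the paper's route, and the reduction step is essentially sound (modulo the imprecision that $\Hon_\ab(K,G)$ is the hypercohomology $\H^1(K,T^\ssc\to T)$, not $\Ho^1(K,\pi_1^\alg(G))$; restriction--corestriction and the $\ell$-primariness of cohomology over a field with pro-$\ell$ absolute Galois group still apply to it). But there is a genuine gap in the Sylow step, and it is aimed at the wrong primes. For $\ell\mid\per(\xi)$ you propose to split the $\ell$-primary part by a finite extension $L'/K_\ell$ of $\ell$-power degree and then claim that descending to $K$ ``contributes only $\ell$-powers to the relevant gcd.'' This is false: the class dies over some finite subextension $F/K$ of $L'$, and $[F:K]=[F:F\cap K_\ell]\cdot[F\cap K_\ell:K]$, where the second factor is prime to $\ell$ but otherwise completely uncontrolled --- it can be divisible by primes not dividing $\per(\xi)$, so this field does not bound the prime-to-$\ell$ part of $\ind(\xi)$. (The word ``bounded'' in ``split by an extension whose degree is a bounded power of $\ell$'' is also doing no work: over $K_\ell$ every finite separable extension automatically has $\ell$-power degree, and no bound is available or needed.) The fix is to run the Sylow argument at the primes $q$ \emph{not} dividing $n=\per(\xi)$: since the theorem is equivalent to the assertion that every prime factor of $\ind(\xi)$ divides $\per(\xi)$, it suffices, for each such $q$, to exhibit one finite separable splitting field of degree prime to $q$. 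Here the argument closes cleanly: $\Ho^{>0}(K_q,-)$ is $q$-primary, so $\Res_{K_q}(\xi_\ab)=1$; for $q$ odd the field $K_q$ admits no real embeddings, so the archimedean obstruction also dies; hence by the Cartesian square $\xi$ dies over a finite subextension $F\subset K_q$, whose degree is automatically prime to $q$. As written, your proposal never produces the splitting fields of degree prime to $q$ for $q\nmid\per(\xi)$, which is exactly what the statement requires.

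For comparison, the paper does not use Sylow subgroups at all. Locally it computes directly with the transfer on the coinvariants $M_{\G,\tors}$ of $M=\pi_1^\alg(G)$ (Theorem \ref{t:n[a]-l}); globally it first restricts to a cyclic extension of $n$-power degree with full local degree at the bad places (Artin--Tate) to land in $\Sha^1$, where $\ab$ is bijective, then lifts the abelian class to an honest torus $T^0$ via a quasi-trivial resolution and splits it using $T^0[n]$-torsors (degree dividing $n^{\dim T^0}$); when $\Char(K)\mid n$ this last step produces inseparable extensions, and the entire Appendix \ref{A:Zev} is needed to restore separability. The payoff of the paper's heavier machinery is the explicit upper bound on $d$ (Remark \ref{rem.upper-bound-on-d}) and concrete cyclic/unramified splitting fields. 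A correctly executed Sylow argument would give only the qualitative statement, but would do so more briefly and would sidestep the characteristic-$p$ separability problem entirely (all subextensions of $K_q/K$ are separable) --- an issue your proposal does not mention but which your corrected approach would render moot.
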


Combining Theorems~\ref{thm:divides} and~\ref{thm.main4}, we obtain:

\begin{corollary}
\label{c:main}
Let $G$ be a reductive group over a local or global field $K$.
Then for every $\xi \in \Hon(K, G)$,
the  integers $\per(\xi)$ and $\ind(\xi)$ have the same prime factors.
\end{corollary}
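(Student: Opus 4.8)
The plan is simply to combine Theorems~\ref{thm:divides} and~\ref{thm.main4}, which sandwich $\per(\xi)$ and $\ind(\xi)$ against one another. Fix a reductive group $G$ over a local or global field $K$ and a class $\xi\in\Hon(K,G)$; we may assume $\xi\neq 1$, since otherwise $\per(\xi)=\ind(\xi)=1$. Theorem~\ref{thm:divides} gives $\per(\xi)\mid\ind(\xi)$, so every prime dividing $\per(\xi)$ divides $\ind(\xi)$. Theorem~\ref{thm.main4} produces a positive integer $d$ with $\ind(\xi)\mid\per(\xi)^d$, so every prime dividing $\ind(\xi)$ divides $\per(\xi)^d$, hence divides $\per(\xi)$. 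Thus $\per(\xi)$ and $\ind(\xi)$ have exactly the same set of prime divisors, which is the assertion of the corollary.

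Since the deduction is this short, the real work is in the two input theorems, and it is worth recording the plan for each. For Theorem~\ref{thm:divides} the approach is the corestriction--restriction argument underlying Brauer's ``period divides index'', transported to the power operation: if a finite separable extension $L/K$ of degree $m$ splits $\xi$, i.e.\ $\Res_{L/K}(\xi)=1$, then applying corestriction --- which is available over local and global fields, where $\Hon$ of a reductive group carries a canonical abelian group structure away from the real places, or, in full generality, after passing to the abelianized cohomology $\Ho^1_{\ab}$ --- converts the identity $\Cor_{L/K}\circ\Res_{L/K}=[L:K]$ into $\xi^{\di m}=1$. By Corollary~\ref{c:xi^per} this forces $\per(\xi)\mid m$, and taking the greatest common divisor over all finite separable splitting fields $L$ yields $\per(\xi)\mid\ind(\xi)$.

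Theorem~\ref{thm.main4} is the substantive ingredient, and the genuine obstacle lives there. Writing $n=\per(\xi)$, one has to show that $\ind(\xi)$ is divisible only by primes dividing $n$; equivalently, for every prime $p\nmid n$ one must construct a finite separable splitting field of $\xi$ of degree prime to $p$. The natural route is a reduction along the algebraic fundamental group: replace $G$ by a $z$-extension $1\to Z\to G'\to G\to 1$ whose derived group is simply connected, use the triviality of $\Hon$ of simply connected groups over non-archimedean local fields (Kneser) and the Hasse principle over global fields (Kneser--Harder--Chernousov) to reduce the splitting problem --- up to the contribution of the real places --- to abelianized cohomology, and thereby to the case of a $K$-torus $T$. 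For tori one then needs a quantitative form of the Grunwald--Wang theorem: a class in $\Hon(K,T)$ of period $n$ can be split by an extension whose degree involves only the prime divisors of $n$, the classical $2$-primary special case of Grunwald--Wang being the delicate point that must be tracked so that no extra prime is forced into the degree. The main difficulty, and the step I expect to be hardest, is to run this reduction uniformly in $G$ while controlling the degrees of all auxiliary extensions so that only primes dividing $\per(\xi)$ ever appear; granting that, Corollary~\ref{c:main} follows formally as in the first paragraph.
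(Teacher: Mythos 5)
Your first paragraph is exactly the paper's proof of Corollary~\ref{c:main}: the paper states it as an immediate consequence of Theorems~\ref{thm:divides} and~\ref{thm.main4}, via the same divisibility sandwich $\per(\xi)\mid\ind(\xi)\mid\per(\xi)^d$. (Your sketches of how the two input theorems are themselves established diverge in places from the paper's actual arguments --- for instance, the paper proves Theorem~\ref{thm.main4} in the global case via Theorem~\ref{t:Sha} and a resolution of $\pi_1^\alg(G)$ by tori rather than via $z$-extensions and Grunwald--Wang --- but since those theorems are taken as inputs here, this does not affect the correctness of your deduction of the corollary.)
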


Theorem~\ref{thm.main4} suggests the following question.

\begin{question} \label{q.period-index}
Let $G$ be a reductive group over a local or global field $K$.
What is the smallest positive integer $d$ such that $\ind(\xi)$ divides $\per(\xi)^d$ for
every $\xi \in \Hon(K, G)$?
\end{question}

Note that Question~\ref{q.period-index}  may be viewed as a variant of the
{\em period-index problem}. In the case of central simple algebras
(that is, in the case where $G = \PGL_n$) over a local or global field $K$, the period-index problem
is settled by Proposition~\ref{prop.ABHN},
which says that $d$ can always be taken to be $1$.
Over other fields, the period-index problem for central simple algebras is open
and remains an active area of research;
for a brief introduction, we refer the reader to~\cite[Section 4]{abgv}.
In the case of a reductive group $G$ over a local or global field $K$, Theorem~\ref{thm.main2} settles
the period-index problem assuming that
the algebraic fundamental group of $G$ is split. For an arbitrary reductive group $G$,
Question~\ref{q.period-index} remains open;
however, we give upper bounds  for $d$. See Corollary~\ref{c:n[a]-l}(b)
in the case of a non-archimedean local field,
and in Remark \ref{rem.upper-bound-on-d} in the case of a global field.

The remainder of this paper is structured as follows.

In Section~\ref{s:abelian} we briefly recall the definition of abelian cohomology
and the abelianization map for a reductive group defined over an arbitrary field $K$.
In Sections~\ref{s:infinite} we discuss the localization map at infinity
in the case where $K$ is a number field.
In Section~\ref{s:Cartesian} we
explain how the abelianization map and the localization map fit together into a Cartesian square.
The constructions of Sections  \ref{s:abelian}--\ref{s:Cartesian} are known; our
presentation highlights their functoriality in both $G$ and $K$.

In Section~\ref{s:real} we define the power operation on $\Hon(K,G)$ in the case
where $K=\R$ is the field of real numbers; see Definition \ref{d:real}.
In Section \ref{s:number} we define the power operation on $\Hon(K,G)$
where $K$ is a non-archimedean local field or a global function field (see Definition \ref{d:nal-gf})
and in the main case where $K$ is a number field (see Construction~\ref{con:power}).
The remainder of Section~\ref{s:number} is devoted to
proving Theorem~\ref{thm.main1} and investigating further uniqueness and functorial properties
of our power operation. We also deduce Theorem \ref{t:non-ex}
from Proposition \ref{p:Gille}.

In Section \ref{s:explicit} we express the power operation over a global field $K$
in terms of the explicit description of $\Hon(K,G)$ given in \cite{BK}.
In Section~\ref{s:period} we use the power operation
to define  the period and the index of an element of $\Hon(K, G)$
in the case where $G$ is a reductive group over a local or global field
$K$, and we prove Theorem~\ref{thm:divides}.
In Section \ref{sect.examples} we discuss the period 2 property
and prove Theorem \ref{t:2-property-intro}.

After some preparatory work in Section~\ref{s:transfer},
in Section~\ref{sect.local} we prove Theorem~\ref{thm.main4} in the local case.
Theorem~\ref{thm.main2} is proved in Section~\ref{sect.main2}.
Finally, in Section ~\ref{sect.main4-0} we prove Theorem ~\ref{thm.main4} in the global case.

In Appendix ~\ref{sect.example-local},
the first-named author proves Theorem~\ref{thm.main3}.
In Appendix \ref{app:Gille}, Philippe Gille proves Proposition~\ref{p:Gille},
from which Theorem \ref{t:non-ex} follows.

\subsection*{Acknowledgments}
We are grateful to the referee for quick and thorough reading,
and for constructive comments which helped us to improve the exposition.

We thank Benjamin Antieau and Benedict Williams
for stimulating conversations about Adams operations
which led to Question~\ref{q:main}.
We are grateful to Kasper Andersen, Tyler Lawson, and Loren Spice
for answering Borovoi's questions in MathOverflow.
We thank Victor Abrashkin, Ofer Gabber, David Harari,
Boris Kunyavski\u{\i}, Andrei S. Rapinchuk, Zev Rosengarten,  Loren Spice,
Nguy\^{e}\~{n} Qu\^{o}\'{c} Th\v{a}\'{n}g, and Angelo Vistoli
for stimulating discussions and email correspondence.
Special thanks to Philippe Gille for writing Appendix \ref{app:Gille}.

This paper was conceived during the first-named author’s visit
to the University of British Columbia, and
written, in part, during his visits to the Max-Planck-Institut f\"ur Mathematik, Bonn,
and Institut des Hautes \'Etudes Scientifiques (IHES).
He thanks these institutions  for their hospitality, support, and
excellent working conditions.

\subsection*{Notation}
For a field $K$, we fix  an algebraic closure $\ov K$ of $K$
and let $K^s$ denote the separable closure of $K$ in $\ov K$.
In this paper, all finite extensions of $K$ that we consider, are contained in $\ov K$.
Similarly, all finite {\em separable} extensions of $K$ that we consider, are contained in $K^s$.
Therefore, for two finite separable extensions $F/K$ and $L/K$,
we may consider the intersection $F\cap L$ and the composite $FL$.

\setcounter{section}{1}

\section{The algebraic fundamental group and abelian Galois cohomology}
\label{s:abelian}

In this section,  $G$ is a reductive group over an arbitrary field $K$.
We recall the definitions of the algebraic fundamental group $\pi_1^\alg(G)$ and the
first abelian Galois cohomology group $\Ho^1_\ab(K,G)$.
We  show that $\Ho^1_\ab(K,G)$ depends functorially  on $G$ and  $K$.
We refer to \cite[Section 1]{Borovoi-Memoir} and \cite[Chapter 5]{BK} for details;
see also \cite[Section 5]{GA}.

Let $G$ be a reductive group over a field $K$.
We recall the definition of $\pi_1^\alg(G)$ from \cite{Borovoi-Memoir}.
Let $G^\ssc$ denote the universal cover of the derived subgroup $[G,G]$ of $G$;
see Borel and Tits~\cite[Proposition (2.24)(ii)]{Borel-Tits72}
or Conrad, Gabber and Prasad~\cite[Corollary A.4.11(1)]{CGP}.
The group $G^\ssc$ is simply connected, which explains the superscript $^\ssc$.
We consider the composite homomorphism
\[\rho\colon G^\ssc\onto [G,G]\into G.\]
For a maximal torus  $T\subseteq G$, we denote
$T^\ssc=\rho^{-1}(T)\subseteq G^\ssc$\hs,
which is a maximal torus of $G^\ssc$.
Let $\X_*(T)$ denote the group of cocharacters of $T$ defined over $K^s$.
Since the kernel $\ker[\rho\colon T^\ssc\to T]$ is finite,
the induced homomorphism on cocharacters $\rho_*\colon \X_*(T^\ssc)\to \X_*(T)$ is injective.
The {\em algebraic fundamental group} of $G$ is defined by
\[\pi_1^\alg(G)=\X_*(T)/\rho_*\X_*(T^\ssc).\]
In particular, if $G=T$ is a torus, then $\pi_1^\alg(T)=\X_*(T)$.

The Galois group $\G=\Gal(K^s/K)$ naturally acts on $\pi_1(G)$,
and the  $\Gal(K^s/K)$-module $\pi_1^\alg(G)$
is well defined (does not depend on the choice of $T$
up to a transitive system of isomorphisms);
see \cite[Lemma 1.2]{Borovoi-Memoir} and \cite[Proposition 2.2.1]{Fu}.

\begin{proposition} \label{prop.ss}
Let $G$ be a {\emm semisimple} group defined over a field $K$.
Let $M=\pi_1^\alg(G)$ be the algebraic fundamental group of $G$,
and let  $\mu=\ker[G^\ssc\to G]$
(which is a finite group of multiplicative type, not necessarily smooth).
Let $\X^*(\mu)$ denote the character group of $\mu$.
Then there is a canonical isomorphism of Galois modules
\[M\cong\Hom\big(\X^*(\mu),\Q/\Z\big).\]
\end{proposition}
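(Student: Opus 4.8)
The statement relates $\pi_1^\alg(G) = \X_*(T)/\rho_*\X_*(T^\ssc)$ for semisimple $G$ to the Cartier dual of the kernel $\mu = \ker[G^\ssc \to G]$. The strategy is to work inside a maximal torus. Fix a maximal torus $T \subseteq G$ and set $T^\ssc = \rho^{-1}(T)$, a maximal torus of $G^\ssc$. The key exact sequence of diagonalizable groups over $K^s$ is
\[
1 \to \mu \to T^\ssc \xrightarrow{\rho} T \to 1,
\]
which is exact on the right because $G$ is semisimple (so $\rho$ is an isogeny). Applying the contravariant exact functor $\X^*(-) = \Hom(-,\GG_m)$ of characters on the category of diagonalizable $K^s$-group schemes, one gets an exact sequence
\[
0 \to \X^*(T) \xrightarrow{\rho^*} \X^*(T^\ssc) \to \X^*(\mu) \to 0,
\]
where surjectivity on the right uses that $\Ext^1$ of diagonalizable groups by $\GG_m$ into character groups vanishes (equivalently, $\X^*$ is exact on diagonalizable groups — this is standard, e.g.\ SGA3 or \cite{Borel-Tits72}).

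\textbf{From characters to cocharacters.} Now apply $\Hom_{\Z}(-,\Z)$ to the short exact sequence of character lattices. Since $G$ is semisimple, $T$ and $T^\ssc$ are both maximal tori of semisimple groups, hence $\X^*(T)$ and $\X^*(T^\ssc)$ are finitely generated free abelian groups, and the canonical pairing identifies $\Hom_\Z(\X^*(T),\Z) \cong \X_*(T)$, and likewise for $T^\ssc$. The six-term $\Ext$–$\Hom$ exact sequence for $\Hom_\Z(-,\Z)$ applied to
$0 \to \X^*(T) \to \X^*(T^\ssc) \to \X^*(\mu) \to 0$
reads
\[
0 \to \X_*(T) \xrightarrow{\rho_*} \X_*(T^\ssc) \to \Ext^1_\Z(\X^*(\mu),\Z) \to \Ext^1_\Z(\X^*(T^\ssc),\Z) = 0,
\]
using $\Hom_\Z(\X^*(\mu),\Z) = 0$ (as $\X^*(\mu)$ is finite) at the left end and freeness of $\X^*(T^\ssc)$ at the right end. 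This yields a canonical isomorphism
\[
\pi_1^\alg(G) = \X_*(T)/\rho_*\X_*(T^\ssc) \;\xrightarrow{\ \sim\ }\; \Ext^1_\Z\big(\X^*(\mu),\Z\big).
\]
Finally, for any finite abelian group $A$ one has a canonical isomorphism $\Ext^1_\Z(A,\Z) \cong \Hom(A,\Q/\Z)$, coming from the exact sequence $0 \to \Z \to \Q \to \Q/\Z \to 0$ and $\Hom_\Z(A,\Q) = \Ext^1_\Z(A,\Q) = 0$. Applying this with $A = \X^*(\mu)$ gives the desired canonical isomorphism $M \cong \Hom(\X^*(\mu),\Q/\Z)$.

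\textbf{Galois equivariance and well-definedness.} The last point to check is that the isomorphism is canonical in the required sense: independent of the choice of $T$ up to a transitive system of isomorphisms, and $\Gal(K^s/K)$-equivariant. Galois equivariance is automatic since every map above is induced by the Galois-equivariant homomorphism $\rho$ and by functorial constructions ($\X^*$, $\Hom_\Z(-,\Z)$, $\Ext^1_\Z(-,\Z)$, the connecting maps). Independence of $T$ follows from the corresponding statement for $\pi_1^\alg(G)$ itself (\cite[Lemma 1.2]{Borovoi-Memoir}) together with the fact that $\mu = \ker[G^\ssc \to G]$ does not involve $T$ at all, so $\X^*(\mu)$ is intrinsic; one checks that for two maximal tori the comparison isomorphism of $\pi_1^\alg$ intertwines with the identity on $\Ext^1_\Z(\X^*(\mu),\Z)$.

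\textbf{Main obstacle.} None of the individual steps is deep; the only subtlety — and the thing I would be most careful about — is the exactness of $\X^*(-)$ on diagonalizable group schemes when $\mu$ is \emph{non-smooth} (the parenthetical warning in the statement). One must use the full antiequivalence between diagonalizable $K^s$-group schemes of finite type and finitely generated abelian groups (SGA3, Exp.\ VIII), under which the short exact sequence of group schemes corresponds exactly to a short exact sequence of abelian groups; this is what makes $0 \to \X^*(T) \to \X^*(T^\ssc) \to \X^*(\mu) \to 0$ genuinely exact even in bad characteristic. Once that antiequivalence is invoked, the rest is formal homological algebra over $\Z$.
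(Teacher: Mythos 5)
Your proof is correct and follows essentially the same route as the paper: both start from the exact sequence $0\to\X^*(T)\to\X^*(T^\ssc)\to\X^*(\mu)\to 0$ and then dualize, the paper via an explicit perfect-pairing lemma for a finite-index sublattice and you via the $\Hom$--$\Ext$ sequence for $\Hom_\Z(-,\Z)$ together with $\Ext^1_\Z(A,\Z)\cong\Hom(A,\Q/\Z)$ for finite $A$ --- the same homological content in two packagings. One transcription slip: dualizing the inclusion $\X^*(T)\hookrightarrow\X^*(T^\ssc)$ yields $0\to\X_*(T^\ssc)\labelto{\rho_*}\X_*(T)\to\Ext^1_\Z(\X^*(\mu),\Z)\to 0$, not the sequence with $\X_*(T)$ and $\X_*(T^\ssc)$ in the order you displayed; your stated conclusion $M=\X_*(T)/\rho_*\X_*(T^\ssc)\cong\Ext^1_\Z(\X^*(\mu),\Z)$ is the one the correctly oriented sequence gives, so the argument stands.
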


Our proof of Proposition~\ref{prop.ss} will rely on the following elementary lemma.

\begin{lemma}
\label{l:duality}
Let $B$ be a lattice (that is, a finitely generated free abelian group),
and let $\iota\colon A\hookrightarrow B$ be a sublattice of finite index.
Consider the dual lattices
$$ A^\vee=\Hom(A,\Z),\quad\ B^\vee=\Hom(B,\Z),$$
and the canonical injective homomorphism
$\iota^\vee\colon B^\vee\hookrightarrow A^\vee$.
Then there is a canonical perfect pairing of finite abelian groups
$$A^\vee/B^\vee\times B/A\to \Q/\Z$$
where we identify $B^\vee$ with $\iota^\vee(B^\vee)\subseteq A^\vee$
and  $A$ with $\iota(A)\subseteq B$.
\end{lemma}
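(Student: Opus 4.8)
The plan is to prove the lemma by direct linear algebra over $\Z$, via the Smith normal form of the inclusion $\iota$. First I would choose a basis $e_1,\dots,e_r$ of $B$ and a basis $f_1,\dots,f_r$ of $A$ such that $\iota(f_i)=d_i e_i$ for positive integers $d_i$ with $d_1\mid d_2\mid\cdots\mid d_r$; this is possible precisely because $\iota$ has finite index, so all $d_i\ge 1$. With respect to the dual bases $e_i^\vee$ of $B^\vee$ and $f_i^\vee$ of $A^\vee$, the dual map $\iota^\vee$ sends $e_i^\vee$ to $d_i f_i^\vee$. Hence both quotients are explicitly $B/A\cong\bigoplus_i \Z/d_i\Z$ (with the class of $e_i$ as generator of the $i$-th summand) and $A^\vee/B^\vee\cong\bigoplus_i \Z/d_i\Z$ (with the class of $f_i^\vee$ as generator), and in particular both are finite of order $[B:A]=\prod_i d_i$, consistent with a perfect pairing.

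Next I would define the pairing intrinsically, so that ``canonical'' is justified: given $\bar\phi\in A^\vee/B^\vee$ represented by $\phi\in A^\vee=\Hom(A,\Z)$, and $\bar b\in B/A$ represented by $b\in B$, choose the positive integer $N=[B:A]$ (or any $N$ with $Nb\in\iota(A)$), write $Nb=\iota(a)$ for a unique $a\in A$, and set
\[
\langle \bar\phi,\bar b\rangle = \frac{\phi(a)}{N} \in \Q/\Z .
\]
I would check this is well defined: independence of the lift $b$ (changing $b$ by an element of $\iota(A)$ changes $a$ by $N$ times an element of $A$, hence changes $\phi(a)/N$ by an integer), independence of the lift $\phi$ (changing $\phi$ by $\iota^\vee(\psi)$ for $\psi\in B^\vee$ changes $\phi(a)$ by $\psi(\iota(a))=\psi(Nb)=N\psi(b)$, an integer multiple of $N$), and bilinearity, which is immediate. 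Equivalently and more conceptually, the pairing is the connecting map arising from applying $\Hom(-,\Z)$ and $\Hom(-,\Q/\Z)$ to $0\to A\to B\to B/A\to 0$ together with $\Q/\Z=\Q/\Z$; I may phrase it either way.

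Finally I would verify non-degeneracy, i.e.\ perfectness. In the Smith basis, $\langle \bar f_i^\vee,\bar e_j\rangle$: here $d_i e_i=\iota(f_i)$, so taking $N=d_i$ and $b=e_i$ gives $a=f_i$, whence $\langle\bar f_i^\vee,\bar e_i\rangle=f_i^\vee(f_i)/d_i=1/d_i$, while for $i\neq j$ one gets $0$. Thus the Gram matrix of the pairing with respect to the chosen generators is $\mathrm{diag}(1/d_1,\dots,1/d_r)$, and the induced map $A^\vee/B^\vee\to\Hom(B/A,\Q/\Z)$ sends $\bar f_i^\vee$ to the character $\bar e_j\mapsto \delta_{ij}/d_i$, which is an isomorphism of each cyclic summand $\Z/d_i\Z$ onto $\Hom(\Z/d_i\Z,\Q/\Z)$. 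Hence the pairing is perfect. The only mild subtlety is the word ``canonical'': the Smith normal form is used only to exhibit perfectness and finiteness, not in the definition of the pairing, so no choices enter the statement; I expect the main (very minor) obstacle is simply presenting the well-definedness check cleanly, not any real mathematical difficulty.
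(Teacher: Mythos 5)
Your proof is correct and complete. A minor observation: the paper itself does not supply an argument for this lemma but simply cites a MathOverflow answer, so there is no in-paper proof to compare yours against; your Smith-normal-form computation combined with the intrinsically defined pairing $\langle\bar\phi,\bar b\rangle=\phi(a)/N$ (where $Nb=\iota(a)$) is the standard route, and the well-definedness checks and the diagonal Gram matrix $\mathrm{diag}(1/d_1,\dots,1/d_r)$ do establish both finiteness and perfectness. One could alternatively phrase the pairing as the connecting isomorphism $A^\vee/B^\vee = \mathrm{Ext}^1_{\Z}(B/A,\Z)\cong\Hom(B/A,\Q/\Z)$ coming from applying $\Hom(-,\Z)$ to $0\to A\to B\to B/A\to 0$ and to $0\to\Z\to\Q\to\Q/\Z\to 0$, which makes canonicity manifest without a basis, but your argument is equally valid.
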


\begin{proof} See ~\cite{Borovoi23-MO}. \end{proof}

\begin{proof}[Proof of Proposition \ref{prop.ss}]
Let $T\subset G$ be a maximal torus, and write $T^\ssc=\rho^{-1}(T)\subset G^\ssc$.
Then by the definition of the algebraic fundamental group $M=\pi_1^\alg(G)$ we have
\[M=\X_*(T)/\X_*(T^\ssc).\]
On the other hand, from the short exact sequence
\[1\to\mu\to T^\ssc\to T\to 1\]
we obtain a short exact sequence
\[0\to\X^*(T)\to\X^*(T^\ssc)\to \X^*(\mu)\to 0;\]
see Milne \cite[Theorem 12.9(b)]{Milne}.
It follows that there is a canonical isomorphism
\[ \X^*(\mu)=\X^*(T^\ssc)/\X^*(T)=\X_*(T^\ssc)^\vee/X_*(T)^\vee.\]
Now by Lemma \ref{l:duality} we have a Galois-equivariant perfect pairing
\[\X^*(\mu)\times M\to\Q/\Z,\]
and the proposition follows.
\end{proof}

\begin{example}
Let $G=\PGL_n$\hs.   Then $\mu\cong\mu_n$, \,$\X^*(\mu)\cong\Z/n\Z$,  \,and
\[\pi_1^\alg(G)\cong \Hom(\X^*(\mu), \Q/\Z)\cong \Hom(\Z/n\Z, \Q/\Z)\cong \tfrac1n \Z/\Z\cong\Z/n\Z.\]
\end{example}

We recall the definition of  the {\em abelian Galois cohomology}
$\Ho^1_\ab(K,G)$; see \cite[Chapter 5]{BK} for details.
By definition,
\[\Ho^1_\ab(K,G)=\HH^1(K,G^\ssc\to G)=\Zl^1(K,G^\ssc\to G)/\sim\]
where $\Zl^1(K,G^\ssc\to G)$ is the set of 1-hypercocycles
and $\sim$ is a certain equivalence relation.
A 1-hypercocycle is a pair $(c^\ssc,c)$
of locally constant maps
\begin{align*}
c^\ssc\colon \G_K\times \G_K\to G^\ssc(K^s),
\quad\ c\colon \G_K\to G(K^s)
\end{align*}
where $\G_K=\Gal(K^s/K)$ and where the pair $(c^\ssc,c)$
satisfies the 1-hypercocycle conditions of \cite[Section 3.3.2]{Borovoi-Memoir}.

The inclusion map $i\colon T\into G$ induces a morphism of complexes
\[ (T^\ssc\to T)\hs\to\hs (G^\ssc\to G),\]
which is a {\em quasi-isomorphism}: the induced homomorphisms
of the kernels and cokernels are isomorphisms.
It follows that the map on hypercohomology
\[\HH^1(K,T^\ssc\to T)\to \HH^1(K,G^\ssc\to G)\]
is bijective; see \cite[Proposition 5.6]{Noohi}.
The set $\HH^1(K,T^\ssc\to T)$ has a natural structure of an abelian group,
and this abelian group depends only on $\pi_1^\alg(G)$.
Thus we obtain a structure of an abelian group on $\Ho^1_\ab(K,G)$.
This group structure can be described in term of $G$ only
(not mentioning  $T$); see \cite[Section 5.4]{BK}.
We obtain a well-defined abelian group $\Ho^1_\ab(K,G)$
(depending only on $\pi_1^\alg(G)$).

\begin{remark}
If $G$ is a torus, then $T=G$ and $\Ho^1_\ab(K,G)=\Hon(K,G)$.
If $G$ is semisimple and $\mu=\ker\big[G^\ssc\to G\big]$,
then we have a canonical isomorphism
\[ \Ho^1_\ab(K,G)\isoto\Ho^2_{\rm fppf}(K,\mu),\]
where $\Ho^2_{\rm fppf}(K,\mu)$ denotes the second flat cohomology
of the finite group scheme $\mu$ (which may not be smooth).
In particular, if $G$ is semisimple and simply connected, then $\mu=1$ and  we have $\Ho^1_\ab(K,G)=1$.
\end{remark}

Let $\varphi\colon G\to H$ be a homomorphism of reductive $K$-groups.
It induces a homomorphism $\varphi^\ssc\colon G^\ssc\to H^\ssc$
and thus a morphism of pairs
\[(\varphi^\ssc,\varphi)\colon\,(G^\ssc\to G)\to(H^\ssc\to H).\]
By definition, the induced map
\[\varphi_\ab\colon \Ho^1_\ab(K,G)\to\Ho^1_\ab(K,H)\]
sends class of a hypercocycle $(c^\ssc\to c)$ to the class of
\[(\varphi^\ssc\circ c^\ssc, \varphi\circ c)\hs\in\Zl^1(K,H^\ssc\to H).\]
It is easy to see that $\varphi_\ab$ is a homomorphism of abelian groups.

Let $\iota\colon K\into  L$ be a field extension.
Choose a separable closure $L^s$ of $L$,
and let $K^s\subseteq L^s$ denote the separable closure of $K$ in $L^s$.
We obtain an induced homomorphism of absolute Galois groups
\[\iota^*\colon\G_L\coloneqq \Gal(L^s/L)\to \Gal(K^s/K)\eqqcolon\G_K\hs.\]
We define a homomorphism
\[\iota_\ab\colon \Ho^1_\ab(K,G)\to \Ho^1_\ab( L,G)\]
by sending the class of a hypercocycle $(c^\ssc,c)\in \Zl^1(K, G^\ssc\to G)$
to the class of
\[ \big(c^\ssc\circ(\iota^*\times\iota^*),\hs c\circ\iota^*\big)\in \Zl^1( L, G^\ssc\to G).\]
Here $c^\ssc\circ(\iota^*\times\iota^*)$ is the map
that sends $(\gamma_1,\gamma_2)\in\G_ L\times\G_L$ to
\[ c^\ssc\big(\iota^*(\gamma_1),\iota^*(\gamma_2)\big)\in G^\ssc(K^s)\subseteq G^\ssc(L^s).\]
This map $\iota_\ab$ is well defined.
It is easy to see that  $\iota_\ab$ is a homomorphism of abelian groups.

With the above definitions of $\varphi_\ab$ and $\iota_\ab$\hs,
the correspondence  $(K,G)\ \functor\, \Ho^1_\ab(K,G)$ is a functor in $G$ and $K$.

Following \cite[Section 3]{Borovoi-Memoir} and \cite[Chapter 5]{BK}, we define the abelianization map
\[\ab\colon \Hon(K,G)\to \Ho^1_\ab(K,G).\]
This map sends the class of a cocycle $c\in\Zl^1(K,G)$
to the class of $(1,c)\in\Zl^1(K,G^\ssc\to G)$.

\begin{proposition}\label{p:ab}
The map
\[ \ab\colon \Hon(K,G)\to \Ho^1_\ab(K,G)\]
where $G$ is a reductive $K$-group,
is functorial in both $G$ and in $K$. That is,

\begin{enumerate}

\item[\rm (1)]
The following diagram, induced by a homomorphism $\varphi\colon G\to H$
of reductive $K$-groups, commutes:

\[
\xymatrix@C=12mm{
\Hon(K,G)\ar[r]^-\ab\ar[d]_-{\varphi_*}   &\Ho^1_\ab(K,G)\ar[d]^-{\varphi_*}\\
\Hon(K,H)\ar[r]^-\ab                      &\Ho^1_\ab(K,H)
}
\]

\smallskip
\item[\rm (2)]
The following diagram, induced by a field extension $\iota\colon K\into L$, commutes:
\[
\xymatrix@C=12mm{
\Hon(K,G)\ar[r]^-\ab\ar[d]_-{\iota_*}   &\Ho^1_\ab(K,G)\ar[d]^-{\iota_*}\\
\Hon(L,G)\ar[r]^-\ab                    &\Ho^1_\ab(L,G)
}
\]
\end{enumerate}

\end{proposition}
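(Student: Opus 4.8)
The plan is to reduce both commutativity statements to equalities of (hyper)cocycles, using that every map in the two diagrams --- the abelianization $\ab$, the functoriality maps $\varphi_*$ and $\iota_*$ on $\Hon$, and their abelian counterparts $\varphi_\ab$ and $\iota_\ab$ --- is given by an explicit cocycle-level formula, as recorded in the subsections above. Recall that $\ab$ sends the class of $c\in\Zl^1(K,G)$ to the class of the hypercocycle $(1,c)$, where $1$ denotes the constant map with value the identity element. Since $\ab$, $\varphi_\ab$ and $\iota_\ab$ are all already known to descend to well-defined maps on cohomology, it is enough to check each square on an arbitrary cocycle representative $c\in\Zl^1(K,G)$; ``equal on cocycles'' then yields ``equal on classes''.

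For part (1), let $\varphi^\ssc\colon G^\ssc\to H^\ssc$ be the induced homomorphism on simply connected covers. Chasing $c$ along the top-then-right edge of the first square produces the hypercocycle $(1,\varphi\circ c)$, while chasing it along the left-then-bottom edge produces $\big(\varphi^\ssc\circ 1,\ \varphi\circ c\big)$. Since $\varphi^\ssc$ is a group homomorphism, $\varphi^\ssc\circ 1$ is again the constant map with value the identity of $H^\ssc(K^s)$, so the two hypercocycles literally coincide; hence so do their classes, and the square commutes.

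For part (2), I would first note that the formation of $G^\ssc$ and of $\rho\colon G^\ssc\to G$ commutes with the base change along $\iota\colon K\into L$, so that the right-hand vertical arrow $\iota_\ab$ is meaningful. Writing $\iota^*\colon\G_L\to\G_K$ for the induced map on absolute Galois groups, chasing $c$ one way around the second square gives $(1,\ c\circ\iota^*)$ and the other way gives $\big(1\circ(\iota^*\times\iota^*),\ c\circ\iota^*\big)$; but $1\circ(\iota^*\times\iota^*)$ is still the constant map with value the identity, now viewed in $G^\ssc(L^s)$ via $G^\ssc(K^s)\subseteq G^\ssc(L^s)$. So again the two hypercocycles agree on the nose, and the square commutes.

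There is no genuine obstacle here: the argument is a formal diagram chase, and the one slightly delicate point --- that ``agree on cocycles'' implies ``agree on cohomology classes'', i.e. the well-definedness of $\ab$, $\varphi_\ab$ and $\iota_\ab$ --- has already been taken care of when those maps were defined. If desired, one can add that parts (1) and (2) together say precisely that $\ab$ is a morphism of functors from $\Hon(-,-)$ to $\Ho^1_\ab(-,-)$ on the category of pairs $(K,G)$.
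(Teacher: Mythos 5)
Your proof is correct; the paper itself omits the argument, stating only that it is ``straightforward'' and left as an exercise, and your cocycle-level diagram chase (using that $\varphi^\ssc\circ 1=1$ and $1\circ(\iota^*\times\iota^*)=1$, together with the already-established well-definedness of $\ab$, $\varphi_\ab$ and $\iota_\ab$) is exactly the intended verification.
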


The proof of Proposition~\ref{p:ab} is straightforward; we leave it as an exercise for the reader.

\section{The localization map at infinity}
\label{s:infinite}

Let $K$ be a number field and $G$ be a linear algebraic group over $K$.
Let $K_v$ denote the completion of $K$ at a place $v$.
Then we have a field extension $\iota_v \colon K \into K_v$
and the induced {\em localization map}
\[\loc_v=\iota_{v,*}\colon\, \Hon(K,G)\to\Hon(K_v,G).\]
Now let $\V_\infty(K)$ denote the set of infinite (that is, archimedean) places of $K$.
We set
\[ \Hon(K_\infty, G)=\!\!\prod_{v\in\V_\infty(K)}\!\! \Hon(K_v,G) \]
and combine the localization maps $\loc_v$ for $v \in \V_\infty(K)$ into a single map,
the {\em localization map at infinity}
\begin{equation}\label{e:loc-inf}
\loc_\infty\colon\  \Hon(K,G)\to \Hon(K_\infty,G),\qquad \xi\,\mapsto
   \Big( \loc_v(\xi)\Big)_{v\in\V_\infty(K)}\in \!\! \prod_{v\in \V_\infty(K)}\!\! \Hon(K_v,G)\,=\,\Hon(K_\infty,G).
\end{equation}

A homomorphism of linear algebraic $K$-groups $\varphi\colon G\to H$
induces a map (a morphism of pointed sets)
\[\varphi_{\infty}\colon  \Hon(K_\infty, G)\to  \Hon(K_\infty, H),\]
which is the product over $v\in \V_\infty(K)$  of the maps $\varphi_v\colon \Hon(K_v,G)\to\Hon(K_v,H)$
where $\varphi_v$ sends the class of $c_v\in\Zl^1(K_v,G)$ to the class of $\varphi\circ c_v\in \Zl^1(K_v,H)$.
Note that for any $v\in\V(K)$, the following diagram commutes:
\begin{equation}\label{e:varphi-loc_v}
\begin{aligned}
\xymatrix@C=12mm{
\Hon(K,G)\ar[r]^-{\varphi_*}\ar[d]_-{\loc_v}  &\Hon(K,H)\ar[d]^-{\loc_v} \\
\Hon(K_v,G)\ar[r]^-{\varphi_v}  &\Hon(K_v,H) .
}
\end{aligned}
\end{equation}

Let $\iota\colon K\to L$ be a finite extension of $K$.
We define an induced map (a morphism of pointed sets)
\[\iota_\infty\colon \Hon(K_\infty, G)\to \Hon(L_\infty,G)\]
as follows.

For a place $v\in \V_\infty(K)$ and a place $w\in\V_\infty(L)$ over $v$,
consider the commutative diagram of fields
\[
\xymatrix@C=12mm{
K\ar[r]^-\iota\ar[d] &L\ar[d]\\
K_v\ar[r]         &L_w
}
\]
and the induced commutative diagram in cohomology
\begin{equation}\label{e:loc-v-w}
\begin{aligned}
\xymatrix@C=12mm{
\Hon(K,G)\ar[r]^-{\iota_*}\ar[d]_{\loc_v} &\Hon(L,G)\ar[d]^{\loc_w}\\
\Hon(K_v,G)\ar[r]^-{\iota_{v,w}} &\Hon(L_w,G)
}
\end{aligned}
\end{equation}
Note that, if $L_w\simeq \C$, then $\Hon(L_w,G)=1$ and the map $\iota_{v,w}$ sends
all elements of $\Hon(K_v,G)$ to $1$.
If $L_w\simeq \R$, then also $K_v\simeq\R$
and the map $\iota_{v,w}$ is bijective.

We combine the maps $\iota_{v, w}$ into a single map
\begin{align*}
\iota_{\infty} \colon \ \Hon(K_\infty,G) =\prod_{v\in \V_\infty(K)}\Hon(K_v,G)
 \ &\lra    \prod_{w\in \V_\infty(L)}\Hon(L_w,G)=\Hon(L_\infty,G),\\
   \quad\big(\xi_v\big)_{v\in\V_\infty(K)}\ &\longmapsto\  \big(\iota_{v(w),w}\hs(\xi_v)\big)_{w\in \V_\infty(L)}
\end{align*}
where $v(w)$ denotes the restriction of $w$ to $K$.

\begin{proposition}\label{p:loc}
The map $\loc_\infty$ of \eqref{e:loc-inf}
is functorial in both $G$ and $K$. That is,
\begin{enumerate}
\item[\rm (1)]
For any number field $K$, the diagram
\[
\xymatrix@C=12mm{
\Hon(K,G)\ar[r]^-{\varphi_*}\ar[d]_-{\loc_\infty}  &\Hon(K,H)\ar[d]^-{\loc_\infty} \\
\Hon(K_\infty, G)\ar[r]^{\varphi_\infty}              &\Hon(K_\infty,H);
}
\]
induced by a homomorphism $\varphi\colon G\to H$ of reductive $K$-groups,
commutes.

\smallskip
\item[\rm (2)] For any embedding of number fields $\iota\colon K\into L$,
and any reductive $K$-group $G$, the diagram
\[
\xymatrix@C=12mm{
\Hon(K,G)\ar[r]^{\iota_*}\ar[d]_-{\loc_\infty}   &\Hon(L,G)\ar[d]^-{\loc_\infty}\\
\Hon(K_\infty,G)\ar[r]^{\iota_{\infty}}                 &\Hon(L_\infty,G)
}
\]
induced by $\iota$, commutes.
\end{enumerate}
\end{proposition}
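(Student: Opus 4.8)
The plan is to unwind the definitions and check compatibility of the localization maps with pushforward along $\varphi$ and with restriction along $\iota$ at each archimedean place separately, then assemble.

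For part (1), fix a number field $K$ and a homomorphism $\varphi\colon G\to H$. Since $\loc_\infty$ is by construction the product over $v\in\V_\infty(K)$ of the maps $\loc_v$, and $\varphi_\infty$ is by construction the product of the maps $\varphi_v$, it suffices to verify that for each single place $v\in\V_\infty(K)$ the square
\[
\xymatrix@C=12mm{
\Hon(K,G)\ar[r]^-{\varphi_*}\ar[d]_-{\loc_v}  &\Hon(K,H)\ar[d]^-{\loc_v} \\
\Hon(K_v,G)\ar[r]^-{\varphi_v}  &\Hon(K_v,H)
}
\]
commutes. But this is exactly diagram~\eqref{e:varphi-loc_v}, which we have already observed commutes: on cocycles, $\loc_v$ sends the class of $c$ to the class of $c\circ\iota_v^*$, while $\varphi_*$ sends it to the class of $\varphi\circ c$; both compositions applied in either order give the class of $\varphi\circ c\circ\iota_v^*$. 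Taking the product over $v\in\V_\infty(K)$ yields the claim.

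For part (2), fix an embedding $\iota\colon K\into L$ of number fields and a reductive $K$-group $G$. Again, by the definitions of $\loc_\infty$ and of $\iota_\infty$, it suffices to check commutativity place by place: for every $w\in\V_\infty(L)$, writing $v=v(w)$ for the restriction of $w$ to $K$, we must verify that $\loc_w\circ\iota_* = \iota_{v,w}\circ\loc_v$ as maps $\Hon(K,G)\to\Hon(L_w,G)$. This is precisely the commutative square~\eqref{e:loc-v-w}, which comes from applying $\Hon(-,G)$ to the commutative square of field embeddings relating $K, L, K_v, L_w$; the two special cases $L_w\simeq\C$ (both routes land in the trivial group) and $L_w\simeq\R$ (then $K_v\simeq\R$ and $\iota_{v,w}$ is a bijection) were already noted and present no difficulty. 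Since $\Hon(L_\infty,G)=\prod_{w\in\V_\infty(L)}\Hon(L_w,G)$ and the $w$-component of $\iota_\infty\circ\loc_\infty$ is $\iota_{v(w),w}\circ\loc_{v(w)}$ while the $w$-component of $\loc_\infty\circ\iota_*$ is $\loc_w\circ\iota_*$, the place-by-place identity assembles to the desired commutative square.

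Honestly there is no real obstacle here: the statement is a bookkeeping assertion, and the content has already been isolated in diagrams~\eqref{e:varphi-loc_v} and~\eqref{e:loc-v-w}. The only point requiring a modicum of care is matching up the index sets in part (2): one must keep track that a place $v\in\V_\infty(K)$ may have several places $w\in\V_\infty(L)$ above it, so the comparison is naturally organized by summing/producting over $w$ and grouping by $v=v(w)$, exactly as in the definition of $\iota_\infty$. Once the indexing is set up correctly, both parts follow immediately from functoriality of Galois cohomology applied to the relevant commutative squares of fields.
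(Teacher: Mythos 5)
Your proof is correct and follows the same route as the paper's: the paper's proof simply notes that (1) follows from the commutativity of diagram~\eqref{e:varphi-loc_v} and (2) from that of diagram~\eqref{e:loc-v-w}, which is exactly the place-by-place reduction you carry out. Your extra care with the index sets (grouping places $w$ of $L$ by their restriction $v(w)$) is a fine, if slightly more verbose, elaboration of the same argument.
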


\begin{proof}
Assertion (1) follows from the commutativity of  diagram \eqref{e:varphi-loc_v},
and assertion (2) follows from the commutativity of  diagram \eqref{e:loc-v-w}.
\end{proof}

\section{The abelianization map and a Cartesian square}
\label{s:Cartesian}

\begin{lemma}
Let $G$ be a reductive group over a number field $K$.
Then the following diagram commutes:
\begin{equation}\label{e:Cartesian}
\begin{aligned}
\xymatrix@C=8mm@R=10mm{
\Ho^1(K,G) \ar[r]^-{\ab} \ar[d]_-{\loc_\infty}    &\Ho^1_\ab(K,G)\ar[d]^-{\loc_\infty}\\
\Ho^1(K_\infty,G) \ar[r]^-{\ab}                   &\Ho^1_\ab(K_\infty,G) ,
}
\end{aligned}
\end{equation}
where
\[ \Ho^1(K_\infty,G)=\hskip-3mm\prod_{v\in \V_\infty(K)}\hskip-3mm\Ho^1(K_v,G)
     \qquad \text{and} \qquad
    \Ho^1_\ab(K_\infty,G)=\hskip-3mm\prod_{v\in \V_\infty(K)}\hskip-3mm
              \Ho^1_\ab(K_v,G)\]
\end{lemma}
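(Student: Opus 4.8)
The plan is to reduce the statement to pure functoriality of the abelianization map, which was already established in Proposition~\ref{p:ab}. Recall that the bottom row and the right-hand column of~\eqref{e:Cartesian} are, by definition, products over the finite set $\V_\infty(K)$ of the corresponding maps for the single completions $K_v$; similarly, $\loc_\infty$ on the left and on the right is the map with $v$-th component equal to $\loc_v = \iota_{v,*}$, where $\iota_v\colon K\hookrightarrow K_v$ is the inclusion of $K$ into its completion. Since a map into a finite product commutes with another map precisely when each of its components does, it suffices to check, for each fixed $v\in\V_\infty(K)$, that the square
\[
\xymatrix@C=12mm{
\Ho^1(K,G)\ar[r]^-{\ab}\ar[d]_-{\loc_v} &\Ho^1_\ab(K,G)\ar[d]^-{\loc_v}\\
\Ho^1(K_v,G)\ar[r]^-{\ab} &\Ho^1_\ab(K_v,G)
}
\]
commutes. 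But this square is exactly the special case of diagram~(2) of Proposition~\ref{p:ab} applied to the field extension $\iota_v\colon K\hookrightarrow K_v$, so its commutativity is already known.

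Alternatively, and perhaps more transparently, one can argue directly on hypercocycles, which amounts to unwinding the definitions given in Section~\ref{s:abelian}. Starting from the class of a cocycle $c\in\Zl^1(K,G)$, going right along the top edge produces the class of $(1,c)\in\Zl^1(K,G^\ssc\to G)$, and then applying $\loc_v$ restricts along $\iota_v^*\colon\G_{K_v}\to\G_K$ to give the class of $(1,\,c\circ\iota_v^*)$. Going the other way, $\loc_v(c)$ is the class of $c\circ\iota_v^*\in\Zl^1(K_v,G)$, and its abelianization is the class of $(1,\,c\circ\iota_v^*)\in\Zl^1(K_v,G^\ssc\to G)$. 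The two results agree on the nose, so the diagram commutes. I would present the first argument as the main line and mention the second as a remark, since the first makes clear that this lemma is not a new computation but a bookkeeping consequence of functoriality already recorded in Propositions~\ref{p:ab} and~\ref{p:loc}.

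There is essentially no obstacle here: the only point requiring a word of care is that the product-over-$\V_\infty(K)$ conventions for $\ab$ and for $\loc_\infty$ on the right-hand side are compatible, i.e.\ that $\ab\colon\Ho^1(K_\infty,G)\to\Ho^1_\ab(K_\infty,G)$ really is the product of the maps $\ab\colon\Ho^1(K_v,G)\to\Ho^1_\ab(K_v,G)$, which is immediate from the definitions. Everything else is formal.
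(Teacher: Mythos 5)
Your proof is correct and is essentially identical to the paper's: both reduce to the single-place square for each $v\in\V_\infty(K)$ and invoke Proposition~\ref{p:ab}(2) for the extension $\iota_v\colon K\hookrightarrow K_v$. The additional cocycle-level verification is a fine sanity check but not needed.
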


\begin{proof}
It suffices to show that for any $v\in \V_\infty(K)$, the following diagram commutes:
\[
\xymatrix@C=8mm@R=8mm{
\Ho^1(K,G) \ar[r]^-{\ab} \ar[d]_-{\loc_v}    &\Ho^1_\ab(K,G)\ar[d]^-{\loc_v}\\
\Ho^1(K_v,G) \ar[r]^-{\ab}                   &\Ho^1_\ab(K_v,G) .
}
\]
This follows from Proposition \ref{p:ab}(2) applied to the field extension $\iota_v\colon K\into K_v$\hs.
\end{proof}

\begin{theorem}[\hs{\cite[Theorem 5.11]{Borovoi-Memoir}}\hs]
\label{t:cd}
Let $G$ be a reductive group over a number field $K$. Then
the commutative  diagram \eqref{e:Cartesian} is a Cartesian square,
that is, it identifies the pointed set $\Ho^1(K,G)$ with the fibered product
of  $\Ho^1_\ab(K,G)$
and   $\Hon(K_\infty,G)$
over  $\Ho^1_\ab(K_\infty,G)$.
\end{theorem}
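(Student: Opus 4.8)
The plan is to prove that the map
\[ \Phi\colon\ \Hon(K,G)\lra \Ho^1_\ab(K,G)\times_{\Ho^1_\ab(K_\infty,G)}\Hon(K_\infty,G),\qquad \xi\mapsto\bigl(\ab(\xi),\loc_\infty(\xi)\bigr), \]
which is well defined by the commutativity of \eqref{e:Cartesian}, is a bijection. I would use the following facts, all available in \cite{Borovoi-Memoir} (see also \cite{BK}): the abelianization exact sequence of pointed sets $\Hon(K,G^\ssc)\xrightarrow{\rho_*}\Hon(K,G)\xrightarrow{\ab}\Ho^1_\ab(K,G)$, valid over $K$ and over every completion, and its compatibility with localization and with the twisting bijections $\tau_\xi\colon\Hon(K,{}_\xi G)\isoto\Hon(K,G)$ --- in particular, under the canonical identification $\Ho^1_\ab(K,{}_\xi G)=\Ho^1_\ab(K,G)$ for inner forms one has $\ab(\tau_\xi\zeta)=\ab(\xi)+\ab(\zeta)$; the surjectivity of $\ab$ over a number field and over each local field; the Hasse principle for a semisimple simply connected $K$-group $S$, namely $\Hon(K,S)\isoto\prod_{v\in\V_\infty(K)}\Hon(K_v,S)$ together with $\Hon(K_v,S)=1$ for every non-archimedean $v$ (Kneser); and the theorem of Kottwitz and Borovoi that $\ab$ induces an isomorphism $\Sha^1(K,G)\isoto\Sha^1_\ab(K,G)$ on Tate--Shafarevich groups.

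\emph{Surjectivity of $\Phi$.} Given a pair $(\eta,\beta)$ with $\loc_\infty(\eta)=\ab(\beta)$, I would first lift $\eta$ to some $\xi_0\in\Hon(K,G)$ using surjectivity of $\ab$, and then replace $G$ by the inner form $G'={}_{\xi_0}G$: via $\tau_{\xi_0}$ and the transformation rule for $\ab$, the problem reduces to finding $\zeta\in\Hon(K,G')$ with $\ab(\zeta)=0$ and $\loc_\infty(\zeta)=\beta'$, where $\beta'=\tau_\infty^{-1}(\beta)$ satisfies $\ab(\beta')=0$ (a short computation from the fiber-product condition). For each $v\in\V_\infty(K)$ the local abelianization sequence would let us write the $v$-component $\beta'_v=\rho_*(\gamma_v)$ with $\gamma_v\in\Hon(K_v,(G')^\ssc)$, and the Hasse principle then provides $\gamma\in\Hon(K,(G')^\ssc)$ with $\loc_v(\gamma)=\gamma_v$ for all archimedean $v$. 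Then $\zeta:=\rho_*(\gamma)$ has $\ab(\zeta)=0$ and $\loc_v(\zeta)=\rho_*(\gamma_v)=\beta'_v$, so $\loc_\infty(\zeta)=\beta'$; untwisting produces $\xi\in\Hon(K,G)$ with $\Phi(\xi)=(\eta,\beta)$.

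\emph{Injectivity of $\Phi$.} Suppose $\ab(\xi)=\ab(\xi')$ and $\loc_\infty(\xi)=\loc_\infty(\xi')$. I would twist by $\xi$ and write $\xi'=\tau_\xi(\zeta)$; the transformation rules then give $\ab(\zeta)=1$ and $\loc_\infty(\zeta)=1$, so it suffices to show $\zeta=1$. By the abelianization sequence $\zeta=\rho_*(\gamma)$ for some $\gamma\in\Hon(K,({}_\xi G)^\ssc)$. For every non-archimedean $v$ we have $\Hon(K_v,({}_\xi G)^\ssc)=1$ by Kneser's theorem, hence $\loc_v(\gamma)=1$ and so $\loc_v(\zeta)=1$; combined with the archimedean case this shows $\zeta\in\Sha^1(K,{}_\xi G)$. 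Since $\ab(\zeta)=1$ and $\ab$ is injective on $\Sha^1$, we get $\zeta=1$, hence $\xi'=\xi$.

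\emph{Main obstacle.} The one genuinely non-formal input is the last fact invoked in the injectivity step: the injectivity of $\ab$ on the Tate--Shafarevich group. This cannot be extracted from the abelianization sequence and the Hasse principle for simply connected groups alone --- a naive attempt to adjust the lift $\gamma$ within its $\rho_*$-fiber so as to make it archimedean-locally trivial runs into circularity --- and it genuinely requires the local--global duality theory (the Poitou--Tate machinery for the complex $T^\ssc\to T$ identifying $\Sha^1_\ab$, together with the Hasse principle for $G^\ssc$), as carried out in \cite{Borovoi-Memoir} and, for the $\Sha$-comparison, going back to \cite{Kottwitz-Duke}. Everything else is formal manipulation of the abelianization sequence, twisting, and localization, relying on the functoriality established in Propositions~\ref{p:ab} and~\ref{p:loc}.
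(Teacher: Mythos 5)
The paper itself offers no proof of this statement: Theorem~\ref{t:cd} is quoted verbatim from \cite[Theorem 5.11]{Borovoi-Memoir}, so the only comparison available is with the argument in that reference. Your reconstruction is essentially the standard one and is correct: well-definedness of $\Phi$ from commutativity of \eqref{e:Cartesian}; surjectivity by lifting $\eta$ via surjectivity of $\ab$, twisting to reduce to the neutral fiber, writing the archimedean components as $\rho_*(\gamma_v)$ via exactness of $\Hon(K_v,G^\ssc)\to\Hon(K_v,G)\to\Ho^1_\ab(K_v,G)$, and globalizing $\gamma$ by the Kneser--Harder--Chernousov bijection $\Hon(K,G^\ssc)\isoto\prod_{v\in\V_\infty(K)}\Hon(K_v,G^\ssc)$; injectivity by twisting and pushing the residual class into $\Sha^1$. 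All the twisting computations you leave as ``short computations'' do check out.

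The one place where you and the cited source genuinely diverge is the injectivity step. You close it by invoking the bijectivity of $\ab\colon\Sha^1(K,G)\to\Sha^1_\ab(K,G)$ as a black box. Be aware that the most natural reference for that statement, \cite[Theorem 5.12]{Borovoi-Memoir}, sits immediately downstream of the Cartesian square in that memoir (and indeed follows from it formally, as the paper's own use of Theorem 5.12 in Section~\ref{sect.main4-0} illustrates), so quoting it here risks circularity unless you point to an independent proof. You do address this correctly by attributing the $\Sha$-comparison to Kottwitz's duality-theoretic computation of $\ker^1$ in \cite{Kottwitz-Duke}, which predates and does not use the Cartesian square; with that attribution the argument is non-circular. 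The alternative, which avoids importing the $\Sha$-comparison altogether, is to control the fiber of $\rho_*$ over $\zeta$ directly: the fibers of $\Hon(K,G^\ssc)\to\Hon(K,G)$ are orbits of the group $\Ho^0_\ab(K,G)$, and real approximation for $\Ho^0_\ab$ lets one replace $\gamma$ by a lift $\gamma'$ of $\zeta$ with $\loc_\infty(\gamma')=1$, whence $\gamma'=1$ by the Hasse principle and $\zeta=1$. That route trades your appeal to Kottwitz for a real-approximation lemma; either way the deep inputs (surjectivity of $\ab$, the Hasse principle for simply connected groups, and one local--global duality statement) are the same, and your identification of the $\Sha$-comparison as the single non-formal obstacle is exactly right.
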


Theorem \ref{t:cd} says that one can identify $\Hon(K,G)$ with the set of pairs
\[\Big\{(\xi_\ab,\hs\xi_\infty)\in \Ho^1_\ab(K,G)\times \Hon(K_\infty,G)\ \Big|\
\loc_\infty(\xi_\ab)=\ab(\xi_\infty)\Big\}.\]

\begin{remark}
All arrows in  diagram \eqref{e:Cartesian} are surjective.
Both horizontal arrows are surjective
by \cite[Theorems 5.7 and 5.4]{Borovoi-Memoir}.
The left-hand vertical arrow  is surjective by \cite[Proposition 6.17 on p.~337]{PR}.
This tells us that the right-hand vertical arrow is also surjective because
diagram~\eqref{e:Cartesian} is commutative.
\end{remark}

\begin{remark}
One readily checks that the commutative diagram \eqref{e:Cartesian}
is functorial in both $G$ and $K$. Since we shall not need this fact, we
leave the details of the proof to an interested reader.
\end{remark}

\section{Real Galois cohomology}
\label{s:real}

In this section we define the power (diamond) operation
for reductive $\R$-groups, thus proving Theorem \ref{thm.main1} for $K=\R$.

Let $(X,1)$ be a pointed set where $1=1_X\in X$
is the distinguished point.
For $x\in X$, consider the subset $\langle \, x \, \rangle \coloneqq \{ 1, \, x  \}$ of $X$.
This subset has a unique group structure, with $1$ being the identity element.
This group has order $1$ when $x = 1$, and order $2$ if $x \neq 1$.
Let $x^{\dit n}$ denote the $n$-th power of $x$ in the group $\langle \, x \, \rangle$.
That is,
 \[
 x^{\dit   n}=
 \begin{cases}
 x &\text{when $n$ is odd,}\\
 1 &\text{when $n$ is even.}
 \end{cases}
 \]
Clearly, we have
\begin{equation*}
x^{\dit m} \ast x^{\dit n} := x^{\dit (m + n)} \quad {\rm and} \quad
x^{\dit  nm}=(x^{\dit  n})^{\dit  m}
\end{equation*}
for any $m, n \in \Z$, where $*$ denotes multiplication in $\langle x\rangle$.

\begin{lemma}\label{l:morphism}
The operation $\nabla$ is functorial:
if $f\colon (X,1_X)\to (Y,1_Y)$ is a morphism of pointed sets, then we have
$f(x^{\dit  n})=f(x)^{\dit  n}$ for all $x\in X,\ n\in \Z$.
\end{lemma}

\begin{proof}
If $n$ is even, then
\[f(x^{\dit n})=f(1_X)=1_Y=f(x)^{\dit n}.\]
If $n$ is odd, then
\[f(x^{\dit n})=f(x)=f(x)^{\dit n}.\qedhere\]
\end{proof}

Let $A$ be a group {\em of exponent dividing $2$}, that is, a group such that $a^2=1_A$ for all $a\in A$.
It is well known that then $A$ is abelian.
 Indeed, for $a,b\in A$, we have \[ab =(ab)^{-1}=b^{-1}a^{-1}=ba.\]

\begin{lemma}\label{l:exp2}
Let $A$ be a group of exponent dividing $2$.
If we regard $A$ as a pointed set with distinguished point $1_A$, then
\[ a^{\dit  n}=a^n\quad\ \text{for all}\  \,a\in A,\ n\in\Z.\]
\end{lemma}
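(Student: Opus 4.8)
The plan is to compute the ordinary group power $a^n$ directly, using only the hypothesis $a^2 = 1_A$, and to match the result against the definition of $a^{\dit n}$ from \ref{ss:def}, namely $a^{\dit n} = a$ for $n$ odd and $a^{\dit n} = 1_A$ for $n$ even. First I would record the immediate consequence $a^{-1} = a$ of $a^2 = 1_A$, so that the claim for negative $n$ follows from the claim for non-negative $n$ (writing $a^{-m} = (a^{-1})^m = a^m$); this reduces everything to $n \ge 0$.

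Next I would split into the two parity cases. If $n = 2k$ with $k \ge 0$, then $a^n = (a^2)^k = 1_A^{\,k} = 1_A = a^{\dit n}$. If $n = 2k+1$ with $k \ge 0$, then $a^n = (a^2)^k \cdot a = 1_A \cdot a = a = a^{\dit n}$. Combining the two cases with the reduction to $n \ge 0$ gives $a^{\dit n} = a^n$ for all $n \in \Z$, which is exactly the assertion.

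I do not anticipate any real obstacle here: the statement is elementary and the only input needed is $a^2 = 1_A$, which also forces $A$ abelian (as already noted in the preceding paragraph of the excerpt, though commutativity is not even required for this particular computation). The one point worth stating carefully is the handling of negative exponents, which is why I would isolate $a^{-1} = a$ at the outset rather than repeating the parity split for negative $n$.
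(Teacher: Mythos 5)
Your proof is correct and follows essentially the same route as the paper's, which also just splits on the parity of $n$ and computes both sides directly. Your extra care with negative exponents (via $a^{-1}=a$) is a reasonable elaboration of a step the paper leaves implicit, but it does not change the argument.
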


\begin{proof}
We  compute both sides directly. If $n$ is even, then $a^{\dit n}=1_A=a^n$. If $n$ is odd,
then $a^{\dit n}=a=a^n$.
\end{proof}

\begin{definition}\label{d:real}
Let $G$ be a reductive $\R$-group.
We define the power (diamond) operation  the pointed set $\Hon(\R,G)$
\[
(\xi,n)\mapsto \xi^{\di n}\coloneqq \xi^{\dit n}\quad\ \text{for}\ \,\xi\in \Hon(\R,G),\ n\in\Z
\]
to be the operation $\dit$ defined above.

It is easy to see that this operation satisfies all of the requirements of Question \ref{q:main}.
In particular, functoriality in $G$ follows from Lemma~\ref{l:morphism}. If $G=T$ is an $\R$-torus,
then $\Hon(\R,T)$ is a group of exponent dividing $2$,
and  Lemma \ref{l:exp2} tells us that
\[\xi^{\di n}\coloneqq\xi^{\dit n}=\xi^n \quad\ \text{for all}\ \ \xi\in \Hon(\R,T),\ n\in\Z.\qedhere \]
\end{definition}

\begin{lemma}\label{l:mz}
Let $G$ be a  reductive $\R$-group. Then
\[\ab(\xi^{\di n})=(\ab\,\xi)^n \quad\ \text{for all}\ \,\xi\in \Hon(\R,G),\ n\in \Z.\]
\end{lemma}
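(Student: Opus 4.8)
The plan is to reduce the statement to Lemma~\ref{l:AG'G} by embedding the abelianization map into a central extension. Recall that for a reductive $\R$-group $G$ with maximal torus $T$, the abelian cohomology $\Hon_\ab(\R,G)$ may be computed as $\H^1(\R,T^\ssc\to T)$, and that $T^\ssc=\rho^{-1}(T)$ sits in a central extension; more usefully, by the definition of $\Hon_\ab$ there is a connecting-type description relating $\ab$ to a boundary map. Concretely, I would use the standard device: choose a $z$-extension or simply work with the central isogeny data. The cleanest route: since $\Hon_\ab(K,G)\cong\Ho^2_{\rm fppf}(K,\mu)$ when $G$ is semisimple, and in general $\Hon_\ab$ is built from the torus part, I would first treat the semisimple case and the torus case separately, then combine.

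First I would dispose of the two extreme cases. If $G=T$ is a torus, then $\ab$ is the identity map $\Hon(\R,T)\to\Hon(\R,T)=\Hon_\ab(\R,T)$, and since $\Hon(\R,T)$ is a group of exponent dividing $2$ (it is $\H^1$ of an abelian $\R$-group), Lemma~\ref{l:exp2} gives $\xi^{\di n}=\xi^{\dit n}=\xi^n$ directly, so both sides equal $\xi^n$. If $G$ is semisimple, let $\mu=\ker[G^\ssc\to G]$ and consider the central exact sequence $1\to\mu\to G^\ssc\to G\to 1$. The associated boundary map $\delta\colon\Hon(\R,G)\to\Ho^2_{\rm fppf}(\R,\mu)\cong\Hon_\ab(\R,G)$ is, up to this identification, exactly the abelianization map $\ab$ (this is the content of the remark following Subsection~\ref{ss:H^1_ab}). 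Now Lemma~\ref{l:AG'G} — whose proof only used that $\delta$ is a morphism of pointed sets and that $\Ho^2$ of an abelian $\R$-group has exponent dividing $2$, both of which hold for $\Ho^2_{\rm fppf}(\R,\mu)$ as well (or one passes to a smooth resolution of $\mu$) — yields $\delta(\xi^{\di n})=\delta(\xi)^n$, i.e. $\ab(\xi^{\di n})=(\ab\,\xi)^n$.

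For the general reductive case I would reduce to these two. Pick a maximal torus $T\subseteq G$ and set $T^\ssc=\rho^{-1}(T)$; then the diagram of $\R$-groups with rows $1\to T^\ssc\to G^\ssc\times T\to G\to 1$ realizes $\Hon_\ab(\R,G)=\H^1(\R,T^\ssc\to T)$ via the quasi-isomorphism $(T^\ssc\to T)\to(G^\ssc\to G)$. Functoriality of $\ab$ in $G$ (Proposition~\ref{p:ab}(1)) lets me compare the power operation on $\Hon(\R,G)$ with abelian cohomology through this auxiliary group; since $^{\di n}$ on the pointed set $\Hon(\R,G)$ is literally $^{\dit n}$ (Subsection~\ref{sect.real1}), Lemma~\ref{l:morphism} applied to the morphism of pointed sets $\ab$ gives $\ab(\xi^{\dit n})=(\ab\,\xi)^{\dit n}$, and then Lemma~\ref{l:exp2} applied to the abelian group $\Hon_\ab(\R,G)$ — which has exponent dividing $2$, being a subquotient assembled from $\H^1$ and $\H^2$ of $\R$-tori — converts $(\ab\,\xi)^{\dit n}$ into $(\ab\,\xi)^n$. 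So in fact the argument collapses to a single clean line once one knows $\Hon_\ab(\R,G)$ has exponent dividing $2$.

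The main obstacle, and the one real point to verify carefully, is precisely that last fact: $\Hon_\ab(\R,G)$ is a $2$-torsion group. This follows because $\Hon_\ab(\R,G)=\H^1(\R,T^\ssc\to T)$ fits in an exact sequence $\Hon(\R,T)\to\Hon_\ab(\R,G)\to\Ho^2(\R,T^\ssc)$ of abelian groups, and both $\Hon(\R,T)$ and $\Ho^2(\R,T^\ssc)$ have exponent dividing $2$ by \cite[Section IV.6, Corollary 1 of Proposition 8]{CF} (real cohomology in positive degree is $2$-torsion, and $\R$-tori reduce via Shapiro's lemma and the structure of induced modules to $\H^*(\Z/2,\cdot)$). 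A subgroup extension of $2$-torsion groups is $2$-torsion, so $\Hon_\ab(\R,G)$ is $2$-torsion. Given this, the proof is: $\ab(\xi^{\di n})=\ab(\xi^{\dit n})=(\ab\,\xi)^{\dit n}=(\ab\,\xi)^n$, using in turn the definition of $^{\di n}$ on the pointed set $\Hon(\R,G)$, Lemma~\ref{l:morphism} for the pointed-set morphism $\ab$, and Lemma~\ref{l:exp2} for the exponent-$2$ abelian group $\Hon_\ab(\R,G)$.
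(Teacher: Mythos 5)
Your final one-line argument
$\ab(\xi^{\di n})=\ab(\xi^{\dit n})=(\ab\,\xi)^{\dit n}=(\ab\,\xi)^n$
is exactly the paper's proof: it combines Lemma~\ref{l:morphism} (applied to the pointed-set morphism $\ab$) with Lemma~\ref{l:exp2} (applied to the abelian group $\Hon_\ab(\R,G)$), and the whole weight rests on the fact that $\Hon_\ab(\R,G)$ has exponent dividing $2$. The preliminary detours through the torus case, the semisimple case and $\Ho^2_{\rm fppf}(\R,\mu)$ are harmless but unnecessary, as you yourself observe.

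The genuine gap is in your verification of the key fact. You argue that $\Hon_\ab(\R,G)=\H^1(\R,T^\ssc\to T)$ sits in an exact sequence $\Hon(\R,T)\to\Hon_\ab(\R,G)\to\Ho^2(\R,T^\ssc)$ with both outer terms of exponent dividing $2$, and conclude that the middle term is $2$-torsion because ``a subgroup extension of $2$-torsion groups is $2$-torsion.'' That last assertion is false: $\Z/4\Z$ is an extension of $\Z/2\Z$ by $\Z/2\Z$ but has exponent $4$. Your exact sequence only shows that $2\eta$ lies in the image of $\Hon(\R,T)$ for every $\eta\in\Hon_\ab(\R,G)$, hence that $4\eta=0$ — not that $2\eta=0$. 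To close the gap you should either cite the fact directly, as the paper does (\cite[Proposition 8.21 and Lemma 1.3]{BT-IJM}), or prove it by restriction--corestriction for the degree-$2$ extension $\C/\R$: the composite $\Cor_{\C/\R}\circ\Res_{\C/\R}$ is multiplication by $2$ on $\H^1(\R,T^\ssc\to T)$, and $\H^1(\C,T^\ssc\to T)=0$ because the absolute Galois group of $\C$ is trivial and the complex $T^\ssc\to T$ is concentrated in degrees $-1,0$; hence $2\cdot\Hon_\ab(\R,G)=0$. With that repair your proof coincides with the paper's.
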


\begin{proof}
Let $T\subseteq G$ be a fundamental torus,
that is, a maximal torus containing a maximal compact torus.
Then by Kottwitz~\cite[Lemma 10.2]{Kottwitz-86},
the natural map $\Hon(\R,T)\to\Hon(\R,G)$ is surjective;
see also \cite[Theorem 3.1]{Borovoi-CiM}.
This permits one to reduce the lemma to the case of a torus,
where it becomes obvious.
\end{proof}

\section{The power map: construction, functoriality and uniqueness}
\label{s:number}

In this section we define the power (diamond) operation
for reductive $K$-groups where $K$ is a non-archimedean local field or a global field,
thus proving Theorem \ref{thm.main1} for these fields.
We also prove Theorem \ref{t:non-ex} that shows
that it is impossible to define a diamond operation
over an arbitrary field.

Let $G$ be a reductive group over a local or global field  $K$.
Let $\xi\in \Hon(K,G)$.
When $K=\R$, we defined the power operation $\di$ in Sections \ref{s:real}.

When $K$ is a non-archimedean local field or a  global  function field,
the abelianization map
\[\ab\colon \Hon(K,G)\to\Ho^1_\ab(K,G) \]
is bijective by~\cite[Corollary 5.4.1]{Borovoi-Memoir}
and~\cite[Theorem 5.8(i)]{GA}; see also~\cite[Theorem 5.5.2(1) and Corollary 5.5.7(3)]{BK}.

\begin{definition}\label{d:nal-gf}
When $K$ is a non-archimedean local field or a  global  function field,
we define the power map
\[(\xi,n)\mapsto \xi^{\di  n}\colon \Hon(K,G)\times\Z\to\Hon(K,G)\]
by the formula
\[ \ab(\xi^{\di n})=\ab(\xi)^n\quad\ \text{for}\ \, \xi\in\Hon(K,G),\  n\in\Z.\]
\end{definition}

\begin{lemma}\label{l:eq}
Let $G$ be a reductive group over a number field $K$.
Let $\xi\in \Hon(K,G)$.
Write
\[\xi_{\infty}=\locinf(\xi)\in\Hon(K_\infty,G)\quad\ \text{and} \quad\ \xi_{\abb}=\abb(\xi)\in \Ho^1_\ab(K,G).\]
Then
\begin{equation}\label{e:loc-ab}
\abb \big( (\xi_{\infty})^{\di n} \big) =\locinf\! \big( (\xi_{\abb})^n \big).
\end{equation}
for any $n \in \Z$.
\end{lemma}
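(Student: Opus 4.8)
The plan is to compare the two sides of \eqref{e:loc-ab} after pushing everything into $\Hon_\ab(K_\infty,G)$, using the compatibility of the abelianization map with localization (the commutative square \eqref{e:Cartesian}). First I would observe that $\ab\colon\Hon_\ab(K,G)\to\Hon_\ab(K_\infty,G)$ need not be injective, so it is not enough to check the identity after applying $\locinf$; rather, I will work directly. The key point is that on the left-hand side the diamond operation is the elementary operation $\dit$ on the pointed set $\Hon(K_\infty,G)=\prod_{v\in\V_\infty(K)}\Hon(K_v,G)$, and this is compatible with the componentwise archimedean power operation, while on the right-hand side $(\xi_\ab)^n$ is the honest $n$-th power in the abelian group $\Hon_\ab(K,G)$.

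The steps, in order, would be: (1) Apply $\locinf$ to the right-hand side and use the fact that $\locinf\colon\Hon_\ab(K,G)\to\Hon_\ab(K_\infty,G)$ is a group homomorphism, so $\locinf((\xi_\ab)^n)=\big(\locinf(\xi_\ab)\big)^n$ in the abelian group $\Hon_\ab(K_\infty,G)=\prod_v\Hon_\ab(K_v,G)$. (2) By the commutativity of \eqref{e:Cartesian}, $\locinf(\xi_\ab)=\ab(\xi_\infty)$, so $\locinf((\xi_\ab)^n)=\ab(\xi_\infty)^n$, the $n$-th power in the abelian group $\Hon_\ab(K_\infty,G)$. (3) Now compute the left-hand side: $\ab\big((\xi_\infty)^{\di n}\big)$. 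Since the diamond on $\Hon(K_\infty,G)$ is defined componentwise as $\dit$ of Subsection \ref{ss:def}, and since $\ab$ is a morphism of pointed sets on each factor $\Hon(K_v,G)\to\Hon_\ab(K_v,G)$, Lemma \ref{l:morphism} gives $\ab\big((\xi_\infty)^{\dit n}\big)=\ab(\xi_\infty)^{\dit n}$. (4) Finally, invoke Lemma \ref{l:mz} (or directly the fact that each $\Hon_\ab(K_v,G)$ has exponent dividing $2$, together with Lemma \ref{l:exp2}) to conclude $\ab(\xi_\infty)^{\dit n}=\ab(\xi_\infty)^n$, the $n$-th power in the group. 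Comparing with step (2), both sides equal $\ab(\xi_\infty)^n$.

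Strictly speaking, the statement \eqref{e:loc-ab} is an equality in $\Hon_\ab(K_\infty,G)$, so after step (2) I would already have that $\locinf\big((\xi_\ab)^n\big)=\ab(\xi_\infty)^n$, and steps (3)–(4) show $\ab\big((\xi_\infty)^{\di n}\big)=\ab(\xi_\infty)^n$ as well; hence the two sides agree. I expect the only subtlety—and the one worth spelling out—to be making sure that the diamond operation appearing on the left is interpreted as the componentwise $\R$-cohomology power operation of Section \ref{s:real} (which it is, since each $K_v$ with $v\in\V_\infty(K)$ is $\R$ or $\C$, and over $\C$ the set is trivial), so that Lemmas \ref{l:morphism} and \ref{l:exp2} apply factor by factor; everything else is bookkeeping with the functoriality already established in Proposition \ref{p:ab} and diagram \eqref{e:Cartesian}.
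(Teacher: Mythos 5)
Your proof is correct and takes essentially the same route as the paper: you invoke the commutativity of the Cartesian square \eqref{e:Cartesian}, the fact that $\loc_\infty$ on $\Ho^1_\ab$ is a group homomorphism, and Lemma \ref{l:mz} (which you usefully unpack into Lemmas \ref{l:morphism} and \ref{l:exp2}). The only nit is the preamble's worry about an injectivity of a map $\ab\colon\Hon_\ab(K,G)\to\Hon_\ab(K_\infty,G)$ (presumably you meant $\loc_\infty$): this concern is moot since \eqref{e:loc-ab} is already an identity in $\Hon_\ab(K_\infty,G)$ and your steps prove it directly, never relying on any such injectivity.
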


Here  $(\xi_{\infty})^{\di n}$ denotes $\xi_{\infty}$
raised to the power $n$ in the pointed set $\prod_{v|\infty} \Hon(K_v,G)$
as defined in Section~\ref{s:real}. By
$(\xi_{\abb})^n$ we mean the $n$-th power of $\xi_{\abb}$ in the abelian group $\Ho^1_\ab(K,G)$.

\begin{proof}
By Lemma~\ref{l:mz} we have
\begin{equation}\label{e:ab-inf}
\abb\big((\xi_{\infty})^{\di n}\big)=\big(\abb(\xi_{\infty})\big)^n.
\end{equation}
Since diagram~\eqref{e:Cartesian} in Section \ref{s:Cartesian} commutes, we have
\begin{equation} \label{e.inf-inf}
\abb(\xi_{\infty}) =
\locinf(\xi_{\abb}). \end{equation}
Moreover, since $\loc_{\infty} \colon \Ho^1_\ab(K,G)
\longrightarrow \prod_{v|\infty}\Ho^1_\ab(K_v,G)$
is a homomorphism of abelian groups, we have
\begin{equation}\label{e:inf-ab}
\big(\locinf(\xi_{\abb})\big)^n =
\loc_\infty\big((\xi_{\abb})^n\big).
\end{equation}
Now~\eqref{e:loc-ab} follows from \eqref{e:ab-inf},~\eqref{e.inf-inf} and \eqref{e:inf-ab}.
\end{proof}

\begin{construction}\label{con:power}
\label{constr:mz}
Let $G$ be a reductive group over a number field $K$.
We define the {\em power map}
\[(\xi,n)\mapsto \xi^{\di  n}\,\colon\, \Hon(K,G)\times \Z\to \Hon(K,G)\]
as follows.
For $\xi\in \Hon(K,G)$ and $n\in\Z$,
let $(\locinf\xi)^{\di n}$ and $(\abb\xi)^n$ be as in Lemma \ref{l:eq}.
By this lemma, equality~\eqref{e:loc-ab} holds.
Since by Theorem \ref{t:cd},  diagram
\eqref{e:Cartesian} is Cartesian, we see that
there exists a unique element
$\eta\in\Hon(K,G)$ such that $\locinf\eta=(\locinf\xi)^{\di n}$
and $\abb\eta=(\abb\xi)^n$. We set $\xi^{\di n}=\eta$.

This construction also works when $K$ is a  global  function field.
Then we have $\V_\infty(K)=\varnothing$,
and similarly to the case of a totally imaginary number field,
we have  $\loc_\infty(\xi)=1$. For a  global  function field, the
power map obtained through this construction coincides with
the power map given by Definition \ref{d:nal-gf}.
\end{construction}

\begin{example}\label{ex:torus}
Assume that $G$ is abelian, that is, $G=T$ is a torus.
Then the abelianization map $\abb\colon \Hon(K,T)\to\Ho^1_\ab(K,T)=\Hon(K,T)$ is the identity map,
whence $\xi^{\di n}=\xi^n$ for all $\xi\in \Hon(K,T)$ and $n\in\Z$.
\end{example}

In the remainder of this section we will explore the functorial and uniqueness properties of the power operation defined in Construction~\ref{con:power}. In particular, we will prove functoriality in $G$ in Proposition~\ref{d6}, functoriality in $K$ in Proposition~\ref{p:ab-n}(i) and uniqueness in
Proposition~\ref{p:uniqueness}.
This, in combination with Example~\ref{ex:torus} and Proposition~\ref{p:ab-n}(v), will complete the proof of Theorem~\ref{thm.main1}. Note that we will also establish some stronger functorial properties of our power operation, beyond those required by Question~\ref{q:main}.

\begin{proposition}
\label{d6}
For reductive groups $G$ over a  global  field $K$,
the power operation $\di$ is functorial in $G$.
In other words, for any homomorphism of reductive $K$-groups $\varphi\colon G\to H$,
any $\xi\in \Hon(K,G)$, and any $n\in \Z$, we have
\[\varphi_*(\xi^{\di n})=\varphi_*(\xi)^{\di n}.\]
\end{proposition}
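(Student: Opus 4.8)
The plan is to split into the three cases covered by the definitions. When $K$ is a non-archimedean local field or a global function field, the abelianization map $\ab\colon\Hon(K,G)\to\Hon_\ab(K,G)$ is bijective, the power operation is defined by $\ab(\xi^{\di n})=\ab(\xi)^n$, and $\ab$ is functorial in $G$ by Proposition~\ref{p:ab}(1). So for $\varphi\colon G\to H$ we compute
\[
\ab\big(\varphi_*(\xi^{\di n})\big)=\varphi_*\big(\ab(\xi^{\di n})\big)=\varphi_*\big(\ab(\xi)^n\big)=\big(\varphi_*\ab(\xi)\big)^n=\ab\big(\varphi_*(\xi)\big)^n=\ab\big(\varphi_*(\xi)^{\di n}\big),
\]
using in the second-to-last step that $\varphi_*\colon\Hon_\ab(K,G)\to\Hon_\ab(K,H)$ is a homomorphism of abelian groups, and in the last step the defining property of $\di$ on $H$. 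Injectivity of $\ab$ for $H$ then gives the claim. The case $K=\R$ (needed as an ingredient for the number field case, and it fits the ``global field'' statement only vacuously, but it is convenient to record it) follows from Lemma~\ref{l:morphism}: $\varphi_*$ is a morphism of pointed sets, so $\varphi_*(\xi^{\dit n})=\varphi_*(\xi)^{\dit n}$, and $\xi^{\di n}=\xi^{\dit n}$ by definition in Section~\ref{s:real}.

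For the main case where $K$ is a number field, I would use Construction~\ref{con:power}: the element $\xi^{\di n}$ is the unique class in $\Hon(K,G)$ whose image under $\locinf$ is $(\locinf\xi)^{\di n}$ and whose image under $\ab$ is $(\ab\xi)^n$. To prove $\varphi_*(\xi^{\di n})=\varphi_*(\xi)^{\di n}$ it suffices, by the uniqueness part of the Cartesian-square characterization (Theorem~\ref{t:cd}) applied over $H$, to check that $\varphi_*(\xi^{\di n})$ has the correct image in $\Hon_\ab(K,H)$ and in $\Hon(K_\infty,H)$. For the abelian component: by Proposition~\ref{p:ab}(1) the square relating $\ab$ and $\varphi_*$ commutes, so $\ab\big(\varphi_*(\xi^{\di n})\big)=\varphi_*\big(\ab(\xi^{\di n})\big)=\varphi_*\big((\ab\xi)^n\big)=\big(\varphi_*\ab\xi\big)^n=\big(\ab\varphi_*\xi\big)^n$, which is exactly the abelian component of $\varphi_*(\xi)^{\di n}$. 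For the infinite component: by Proposition~\ref{p:loc}(1) the square relating $\locinf$ and $\varphi_*$ commutes, so $\locinf\big(\varphi_*(\xi^{\di n})\big)=\varphi_\infty\big(\locinf(\xi^{\di n})\big)=\varphi_\infty\big((\locinf\xi)^{\di n}\big)$, and since $\varphi_\infty$ is a morphism of pointed sets (a product of the $\varphi_v$), Lemma~\ref{l:morphism} gives $\varphi_\infty\big((\locinf\xi)^{\di n}\big)=\big(\varphi_\infty\locinf\xi\big)^{\di n}=\big(\locinf\varphi_*\xi\big)^{\di n}$, which is the infinite component of $\varphi_*(\xi)^{\di n}$. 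By the uniqueness in Construction~\ref{con:power} (over $H$), the two classes coincide.

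The argument is essentially a diagram chase, so I do not expect a serious obstacle; the only point requiring care is the bookkeeping of which maps are genuine group homomorphisms (the $\varphi_*$ and $\locinf$ on abelian cohomology) versus merely morphisms of pointed sets (the $\varphi_v$, $\varphi_\infty$, and $\locinf$ on nonabelian cohomology), so that Lemma~\ref{l:morphism} is invoked exactly where the target is only a pointed set and the homomorphism property is invoked exactly where the target is a group. One should also note at the outset, as in Construction~\ref{con:power}, that the global function field case is subsumed by the Cartesian-square argument with $\V_\infty(K)=\varnothing$, so in fact only two genuinely distinct arguments are needed: the number-field (Cartesian-square) one and the non-archimedean-local one via bijectivity of $\ab$.
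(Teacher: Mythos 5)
Your proposal is correct and follows essentially the same route as the paper: the paper's proof establishes exactly the two equalities $\ab(\varphi_*(\xi^{\di n}))=\ab(\varphi_*(\xi)^{\di n})$ and $\loc_\infty(\varphi_*(\xi^{\di n}))=\loc_\infty(\varphi_*(\xi)^{\di n})$ via Propositions~\ref{p:ab}(1) and \ref{p:loc}(1), the homomorphism property of $\varphi_\ab$, and the pointed-set property of $\varphi_\infty$, then concludes by the Cartesian square. Your separate treatment of the function-field case via bijectivity of $\ab$ is a harmless variant already noted as equivalent in Construction~\ref{con:power}.
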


\begin{proof}
We need to establish the following equalities:
\begin{align}
\ab\big(\varphi_*(\xi^{\di n})\big)&=\ab\big(\varphi_*(\xi)^{\di n}\big), \quad\ \text{and} \label{f6-ab}\\
\loc_\infty\big(\varphi_*(\xi^{\di n})\big)&=\loc_\infty\big(\varphi_*(\xi)^{\di n}\big).\label{f6-loc}
\end{align}

To prove \eqref{f6-ab},
write $\varphi_\ab$ for the induced map $\Ho^1_\ab(K,G)\to\Ho^1_\ab(K,H)$.
We have
\begin{align*}
\ab\big(\varphi_*(\xi^{\di n})\big)&=\varphi_\ab\big(\ab(\xi^{\di n})\big)\quad\ \text{by Proposition \ref{p:ab}(1)}\\
   &=\varphi_\ab(\ab(\xi)^n)   \quad\ \ \,\text{by the definition of the power map  $\di n$}\\
   &=\varphi_\ab(\ab(\xi))^n     \quad \text{because $\varphi_\ab$ is a group homomorphism}\\
   &=\ab(\varphi_*(\xi))^n   \quad\ \text{by Proposition \ref{p:ab}(1)}\\
   &=\ab\big(\varphi_*(\xi)^{\di n}\big)   \quad\ \, \text{by the definition of the power map  $\di n$}.
\end{align*}

To prove \eqref{f6-loc},
write $\varphi_\infty$ for the induced map $\Hon(K_\infty,G)\to\Hon(K_\infty, H)$.
We have
\begin{align*}
\loc_\infty\big(\varphi_*(\xi^{\di n})\big)
   &= \varphi_\infty\big(\loc_\infty(\xi^{\di n})\big)\quad\ \text{by Proposition \ref{p:loc}(1)}\\
   &=\varphi_\infty\big(\loc_\infty(\xi)^{\di n}\big)
      \quad\ \text{by the definition of the power map  $\di n$}\\
   &=\varphi_\infty(\loc_\infty\,\xi)^{\di n}\quad\, \text{because $\varphi_\infty$ is a morphism of pointed sets} \\
   &=\loc_\infty(\varphi_*(\xi))^{\di n}\quad\hm\text{by Proposition \ref{p:loc}(1)}\\
   &=\loc_\infty(\varphi_*(\xi)^{\di n}\big)\quad\ \ \text{by the definition of the power map  $\di n$},
\end{align*}
as desired.
\end{proof}

\begin{proposition} \label{p:ab-n}
Let $G$ be a reductive group over a  global field $K$,
$\xi \in \Hon(K, G)$ and $n \in \Z$.
\begin{enumerate}

\item[\rm (i)] The operation $\di$ is functorial in $K$
with respect to finite separable extensions of  global  fields.
In other words, for any  finite separable
extension of  global  fields $\iota\colon K\into L$, we have
\begin{equation*}
\iota_*(\xi^{\di n})=\iota_*(\xi)^{\di n}.
\end{equation*}

\item [\rm (ii)]
For every place $v$ of $K$, we have
\[\loc_v(\xi^{\di n})=(\loc_v\hs\xi)^{\di n}\hs;\]
see Definition~\ref{d:nal-gf} for the meaning of\/ $(\loc_v\hs\xi)^{\di n}$ when $v$ is non-archimedean.

\item[\rm (iii)]
For any short exact sequence of $K$-groups
$1\to A \to G'\to G\to 1$
where $G'$ and $G$ are reductive and $A$ is central in $G'$ and smooth, we have
\[\delta(\xi^{\di n})=\delta(\xi)^n. \]
Here $\delta\colon \Hon(K,G)\to \Ho^2(K,A)$ is the connecting map.\smallskip

\item[\rm(iv)]
$\displaystyle \ab(\xi^{\di n})=\ab(\xi)^n$ for any $\xi\in \Hon(K,G)$ and $n\in \Z$.

\item[\rm(v)] $1^{\di n} = 1$ and $(\xi^{\di m})^{\di n} = \xi^{\di mn}$ for every $m, n \in \Z$.

\end{enumerate}
\end{proposition}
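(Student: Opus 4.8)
The plan is to prove parts (i)--(v) by reducing each assertion to the commutativity of diagram~\eqref{e:Cartesian} (Theorem~\ref{t:cd}) together with the two functoriality statements already in hand: Proposition~\ref{p:ab} for the abelianization map and Proposition~\ref{p:loc} for the localization map at infinity. The key observation is that, by Construction~\ref{con:power}, an element $\eta\in\Hon(K,G)$ is uniquely determined by the pair $\big(\ab(\eta),\loc_\infty(\eta)\big)$; hence to prove that two cohomology classes agree it suffices to check that their images under $\ab$ and under $\loc_\infty$ agree. Throughout, I will freely use that $\xi^{\di n}$ is characterized by $\ab(\xi^{\di n})=\ab(\xi)^n$ and $\loc_\infty(\xi^{\di n})=\loc_\infty(\xi)^{\di n}$, where on the target the $\di n$-operation on $\Hon(K_\infty,G)$ is the one of Section~\ref{s:real} applied componentwise.

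For part~(iv), the formula $\ab(\xi^{\di n})=\ab(\xi)^n$ is nothing but the defining property of $\xi^{\di n}$ in Construction~\ref{con:power}, so there is nothing to prove beyond unwinding the definition (and, in the function-field case, it is Definition~\ref{d:nal-gf}). For part~(v): the identity $1^{\di n}=1$ follows because $\ab(1)=1$, $1^n=1$ in the abelian group $\Ho^1_\ab(K,G)$, $\loc_\infty(1)=1$, and $1^{\di n}=1$ in each pointed set $\Hon(K_v,G)$; uniqueness in the Cartesian square then forces $1^{\di n}=1$. For $(\xi^{\di m})^{\di n}=\xi^{\di mn}$, I compute $\ab\big((\xi^{\di m})^{\di n}\big)=\ab(\xi^{\di m})^n=(\ab(\xi)^m)^n=\ab(\xi)^{mn}=\ab(\xi^{\di mn})$ using part~(iv) and the group structure on $\Ho^1_\ab(K,G)$, and similarly $\loc_\infty\big((\xi^{\di m})^{\di n}\big)=\big(\loc_\infty(\xi)^{\di m}\big)^{\di n}=\loc_\infty(\xi)^{\di mn}=\loc_\infty(\xi^{\di mn})$ using the identity $x^{\dit mn}=(x^{\dit m})^{\dit n}$ established in Subsection~\ref{ss:def}; then invoke uniqueness. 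Part~(iii) follows exactly as in the proof of Lemma~\ref{l:AG'G}, but over $K$ rather than $\R$: the connecting map $\delta$ is a morphism of pointed sets, so $\delta(\xi^{\di n})=\delta(\xi^{\di n})$, and one checks $\ab$- and $\loc_\infty$-compatibility; alternatively, and more directly, one uses that $\delta$ factors through $\ab$ composed with the abelian connecting map $\Ho^1_\ab(K,G)\to\Ho^2(K,A)$ (valid since $A$ is central and smooth), reducing to part~(iv) and the additivity of the abelian connecting homomorphism.

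The substantive parts are (i) and (ii), and the main obstacle there is keeping track of how $\loc_\infty$ interacts with a finite extension $\iota\colon K\into L$, i.e.\ the map $\iota_\infty$ of Section~\ref{s:infinite}. For part~(ii), if $v$ is archimedean the claim is immediate from the definition of $\loc_\infty$ and Lemma~\ref{l:morphism}; if $v$ is non-archimedean one must compare $\loc_v(\xi^{\di n})$ with $(\loc_v\,\xi)^{\di n}$ as defined via Definition~\ref{d:nal-gf}, which amounts to checking $\ab_{K_v}\big(\loc_v(\xi^{\di n})\big)=\ab_{K_v}(\loc_v\,\xi)^n$; this follows from part~(iv) over $K$ together with the functoriality of $\ab$ in the field (Proposition~\ref{p:ab}(2)) applied to $\iota_v\colon K\into K_v$, since $\loc_v\circ\ab=\ab\circ\loc_v$. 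For part~(i), I would verify the two equalities $\ab\big(\iota_*(\xi^{\di n})\big)=\ab\big(\iota_*(\xi)^{\di n}\big)$ and $\loc_\infty\big(\iota_*(\xi^{\di n})\big)=\loc_\infty\big(\iota_*(\xi)^{\di n}\big)$, and then appeal to uniqueness in the Cartesian square over $L$. The first reduces, via Proposition~\ref{p:ab}(2), to the statement that $\iota_\ab\colon\Ho^1_\ab(K,G)\to\Ho^1_\ab(L,G)$ is a group homomorphism (noted in Section~\ref{s:abelian}) together with part~(iv); the second reduces, via Proposition~\ref{p:loc}(2), to showing that $\iota_\infty\colon\Hon(K_\infty,G)\to\Hon(L_\infty,G)$ commutes with the $\di n$-operation, which in turn follows from Lemma~\ref{l:morphism} because each component map $\iota_{v,w}$ is a morphism of pointed sets. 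The only real care needed is the bookkeeping of places $w\mid v$ and the fact that $\Hon(L_w,G)=1$ whenever $L_w\simeq\C$, which makes the relevant component of $\iota_\infty$ trivial and hence trivially compatible with powers. I expect the place-bookkeeping in part~(i) to be the most error-prone step, but it is routine given Proposition~\ref{p:loc}(2).
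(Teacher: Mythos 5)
Your route is genuinely different from the paper's. The paper proves all five parts in three lines by invoking Lemma~\ref{t:Bor98-T}: every $\xi$ is of the form $i_*(\xi_T)$ for a maximal torus $i\colon T\into G$, the identities hold for $\xi_T$ because $\Hon(K,T)$ is an abelian group (Example~\ref{ex:torus}), and functoriality in $G$ (Proposition~\ref{d6}) transports them to $\xi$. You instead verify each identity directly from the defining Cartesian-square characterization, checking the $\ab$- and $\loc_\infty$-components separately. For parts (i), (ii), (iv) and (v) your verification is correct and complete: (iv) is indeed the definition, (v) and (ii) follow as you say, and in (i) the two computations via Proposition~\ref{p:ab}(2), Proposition~\ref{p:loc}(2), the group-homomorphism property of $\iota_\ab$, and Lemma~\ref{l:morphism} applied componentwise to $\iota_\infty$ are exactly what is needed (the $\dit n$ operation on the product pointed set is componentwise, so the bookkeeping of places is harmless). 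What your approach buys is independence from the nontrivial surjectivity statement of Lemma~\ref{t:Bor98-T}; what it costs is length, and one real problem in part (iii).

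For (iii), your first suggestion --- ``exactly as in the proof of Lemma~\ref{l:AG'G}, but over $K$'' --- does not work: that proof uses that $\Ho^2(\R,A)$ has exponent dividing $2$, which identifies $\delta(\xi)^{\dit n}$ with $\delta(\xi)^n$; over a general global field $\Ho^2(K,A)$ has no such property, and moreover $\xi^{\di n}\neq\xi^{\dit n}$, so the computation collapses (your displayed ``$\delta(\xi^{\di n})=\delta(\xi^{\di n})$'' is a tautology, symptomatic of this). Your fallback --- that $\delta$ factors as $\Hon(K,G)\labelto{\ab}\Hon_\ab(K,G)\to\Ho^2(K,A)$ with the second map a group homomorphism --- is true and does finish the argument, but it is a nontrivial compatibility of the abelianization map with connecting maps that you assert without proof or reference (it is established in \cite[Section 3]{Borovoi-Memoir}). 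Either cite that fact explicitly, or handle (iii) the way the paper does: pull back the central extension to a maximal torus $T$ through which $\xi$ factors, note that the preimage $T'$ of $T$ in $G'$ is commutative so the connecting map $\Hon(K,T)\to\Ho^2(K,A)$ is a homomorphism, and use functoriality of connecting maps together with Proposition~\ref{d6}.
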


Our proof of Proposition~\ref{p:ab-n} will be based on the following lemma.

\begin{lemma}\label{t:Bor98-T}
For any reductive group $G$ over a local or global field $K$,
and for any cohomology class $\xi\in \Hon(K,G)$,
there exists a maximal torus $i\colon T\into G$ over $K$ and a cohomology class $\xi_T\in \Hon(K,T)$
such that $\xi=i_*(\xi_T)$.
\end{lemma}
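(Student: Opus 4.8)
The statement asserts that over a global field $K$, every class $\xi \in \Hon(K,G)$ comes from a class in $\Hon(K,T)$ for some maximal torus $T \subseteq G$ defined over $K$. The strategy is to twist $G$ by a cocycle representing $\xi$ and reduce to the problem of finding a maximal torus in the twisted group that is ``anisotropic enough'' or satisfies a local condition guaranteeing a preimage; this is precisely the content of Kottwitz's argument (and of \cite[Lemma 5.4]{Borovoi-Memoir}, \cite{Borovoi98}). First I would choose a cocycle $c \in \Zl^1(K,G)$ representing $\xi$ and form the inner twist $G' = {}_cG$. By the standard twisting bijection $\Hon(K,G') \isoto \Hon(K,G)$ sending the neutral class to $\xi$, it suffices to exhibit a maximal $K$-torus $T' \subseteq G'$ such that the neutral class of $\Hon(K,G')$ lies in the image of $\Hon(K,T') \to \Hon(K,G')$ — but the neutral class always lifts to the neutral class of $\Hon(K,T')$, so in fact \emph{any} maximal $K$-torus would do for the trivial class. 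The real point is that we must transport a \emph{maximal torus of $G'$} back to a maximal torus of $G$ through which $\xi$ factors; equivalently, we need a maximal $K$-torus $T \subseteq G$ and a class $\xi_T \in \Hon(K,T)$ with $i_*(\xi_T) = \xi$, which by the twisting dictionary corresponds to finding a maximal $K$-torus $T' \subseteq G'$ whose image in $\Hon(K,G')$ hits the base point — and here one must be careful that the torus in $G$ obtained by untwisting $T'$ is defined over $K$, which forces a constraint on $T'$.

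The clean way to organize this is the local–global approach. Over each completion $K_v$, by a theorem of Kneser (for non-archimedean $v$) and by the real case, the localized class $\loc_v(\xi)$ comes from a maximal torus of $G$ over $K_v$; more usefully, one uses that there exists a maximal $K$-torus $T \subseteq G$ which is ``sufficiently generic'' so that the natural map $\Hon(K,T) \to \Hon(K,G)$ is surjective onto the relevant part. Concretely, I would invoke the result that for $G$ reductive over a global field, maximal tori $T$ with prescribed local behavior at finitely many places exist (by Harder's or Kneser's density arguments, or \cite[Corollary 7.3]{PR}), and then appeal to the Hasse principle packaged in Theorem~\ref{t:cd}: the class $\xi$ is determined by $\ab(\xi) \in \Hon_\ab(K,G)$ together with $\loc_\infty(\xi)$. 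Choose $T$ so that (a) $\pi_1^\alg(T) \to \pi_1^\alg(G)$ is surjective, forcing surjectivity of $\Hon_\ab(K,T) = \Hon(K,T)_{\text{?}} \to \Hon_\ab(K,G)$ on the abelian side, and (b) at each archimedean place $v$, $T$ is a maximal $K_v$-torus containing a maximal torus of a fundamental Cartan so that $\Hon(K_v,T) \to \Hon(K_v,G)$ is surjective. Then lift $\xi_{\ab}$ and $\xi_\infty$ through $T$ and glue via the Cartesian square.

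The main obstacle is matching the archimedean data and the abelian data \emph{simultaneously} through a \emph{single} $K$-torus $T$ — i.e. showing the two lifts one obtains can be taken compatible so that Theorem~\ref{t:cd} produces an honest class in $\Hon(K,T)$ mapping to $\xi$. This requires knowing that $\Hon(K,T) \to \Hon_\ab(K,T) \times \prod_{v\mid\infty}\Hon(K_v,T)$ is itself compatible with the corresponding square for $G$ (functoriality of diagram~\eqref{e:Cartesian}, which the paper notes holds), plus a surjectivity statement for the torus that is strong enough at the archimedean places. I expect the cleanest route is simply to cite the existing result \cite[Lemma 5.4 or Theorem 5.5]{Borovoi-Memoir} (or Kottwitz \cite[Lemma~10.2]{Kottwitz-Duke}, or \cite{Borovoi98}) that this ``reduction to a torus'' holds for reductive groups over number fields and global function fields, since that is exactly the assertion of the lemma; the work above is then just recalling why such a $T$ exists. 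If a self-contained argument is wanted, the genericity-of-tori input of Prasad–Rapinchuk type combined with the Hasse principle is the substitute, and verifying the archimedean surjectivity $\Hon(K_v,T)\onto\Hon(K_v,G)$ for a suitably chosen fundamental torus is the technical heart.
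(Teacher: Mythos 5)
Your proposal ultimately lands on the same move the paper makes: this is not proved from scratch but cited. The paper's proof is a one-line reference, to \cite[Theorem 5.10]{Borovoi-Memoir} for number fields and to \cite[Corollary 1.10]{Thang} for global function fields; both assert that for any \emph{finite} subset $X\subseteq\Hon(K,G)$ there is a single maximal $K$-torus $i\colon T\into G$ with $i_*\big(\Hon(K,T)\big)\supseteq X$, which in particular gives the lemma for $X=\{\xi\}$. You correctly guess that the right move is ``cite the known reduction to a torus,'' so in spirit you match the paper. However, your citations are off: the relevant statement in \cite{Borovoi-Memoir} is Theorem~5.10, not Lemma~5.4 or Theorem~5.5 (the latter are the surjectivity of $\ab$ onto $\Hon_\ab$ and the abelianization Hasse principle, respectively, which are ingredients rather than the conclusion), and you do not address the global function field case at all, for which the paper needs Th\v{a}ng's result since \cite{Borovoi-Memoir} is written for number fields.

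A few remarks on the exploratory part of your write-up, which you sensibly flag as only a plan. The twisting detour at the start is a red herring as written: observing that the neutral class of $\Hon(K,G')$ lifts through any maximal torus of $G'$ gives nothing, because the torus $T'\subseteq G'$ you would want is not an arbitrary one but one whose ``untwist'' is a $K$-torus of $G$ through which $\xi$ factors, and you do not supply a criterion for when that happens. Your condition (a), that $\pi_1^{\alg}(T)\to\pi_1^{\alg}(G)$ be surjective, is automatic for any maximal torus $T\subseteq G$ (one has $\pi_1^{\alg}(G)=\X_*(T)/\rho_*\X_*(T^{\ssc})$ by definition), so it imposes no constraint; the real constraint is archimedean, your condition (b), and that a single $\eta\in\Hon(K,T)$ can be chosen hitting both $\ab(\xi)$ and $\loc_\infty(\xi)$ compatibly, which you correctly identify as the technical heart. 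That matching argument is precisely what is carried out in \cite[Theorem 5.10]{Borovoi-Memoir} via the Cartesian square and a genericity-of-tori input, so your plan is on the right track but leaves a nontrivial gap that the citation is meant to close.
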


\begin{proof}
This follows from \cite[Lemma 10.2]{Kottwitz-86}
in the local field case (including the case $K=\R$),
from \cite[Theorem 5.10]{Borovoi-Memoir} in the number field case,
and from \cite[Corollary 1.10]{Thang} in the function field case.
\end{proof}

\begin{proof}[Proof of Proposition~\ref{p:ab-n}]
By Lemma~\ref{t:Bor98-T} there exists a maximal torus $i\colon T\into G$
defined over $K$, and a cohomology class  $\xi_T\in\Hon(K,T)$ such that $\xi=i_*(\xi_T)$.
Since $\Hon(K, T)$ is an abelian group, the identity we want to prove in each part
is valid for $\xi_T$; see Example~\ref{ex:torus}. Now apply $i_*$ and use Proposition~\ref{d6}
to deduce the desired identity for $\xi$.

For example, to prove (iii), we observe that
since $A$ is central in $G'$, we have $A\subseteq T'$
where $T'$ is the preimage of $T$ in $G'$.
Consider the commutative diagram
\begin{equation*}
\begin{aligned}
\xymatrix{
1 \ar[r] &A\ar[r]\ar@{=}[d] &T'\ar[r]\ar[d]^-{i'} &T\ar[r]\ar[d]^-i &1\\
1 \ar[r] &A\ar[r]           &G'\ar[r]             &G\ar[r]              &1
}
\end{aligned}
\end{equation*}
whose rows are central short exact sequences of algebraic groups.
Moreover, the algebraic groups in the top row are {\em abelian}.
This diagram induces a commutative diagram of Galois cohomology sets
\begin{equation*}
\begin{aligned}
\xymatrix@C=10mm{
\Hon(K,T)\ar[r]^{\delta_T}\ar[d]_-{i_*} & \Ho^2(K,A)\ar@{=}[d]\\
\Hon(K,G)\ar[r]^\delta           & \Ho^2(K,A)
}
\end{aligned}
\end{equation*}
in which the connecting map $\delta_T$ is a homomorphism of abelian groups.
Here the commutativity of the diagram means that
\begin{equation}\label{e:com}
\delta_T=\delta\circ i_*\hs.
\end{equation}
We wish to compute $$\delta(x^{\di n})=\delta\big(i_*(\xi_T)^{\di n}\big).$$
By Proposition \ref{d6}  (functoriality in $G$), we have
\[ i_*(\xi_T)^{\di n}=i_*(\xi_T^{\di n})=i_*(\xi_T^n),\]
whence
\[ \delta(x^{\di n}) = \delta\big(i_*(\xi_T^n)\big).\]
By \eqref{e:com} we have
\[\delta\big(i_*(\xi_T^n)\big)=\delta_T(\xi_T^n).\]
Since the map $\delta_T$ is a  group homomorphism, we have
\[ \delta_T(\xi_T^n)=\delta_T(\xi_T)^n.\]
By \eqref{e:com} we have
\[\delta_T(\xi_T)=\delta\big(i_*(\xi_T)\big)=\delta(\xi),\]
and therefore,
\[ \delta_T(\xi_T)^n=\delta(\xi)^n.\]
Thus
\[\delta(x^{\di n})=
\delta\big(i_*(\xi_T)^{\di n}\big)=
\delta\big(i_*(\xi_T^n)\big)=
\delta_T(\xi_T^n)=
\delta_T(\xi_T)^n=
\delta(\xi)^n,
\]
as desired.
\end{proof}

\begin{example} \label{ex.PGLn} Consider the short exact sequence
$1 \to \GG_m \to \GL_n \to \PGL_n \to 1$
defined over a field $K$.
It is well known that
\begin{itemize}
\item the elements of $\Hon(K, \PGL_n)$ are in a natural bijective
correspondence with the set of isomorphism classes of central simple algebras of degree $n$ over $K$; see~\cite[p.~396]{involutions};

\item
the second cohomology group $\Ho^2(K, \GG_m)$ is naturally isomorphic to the Brauer group $\Br(K)$; see~\cite[Theorem 4.4.3]{gille-szamuely};
and

\item the connecting map $\delta \colon \Hon(K, \PGL_n) \to \Ho^2(K, \GG_m)$ taking
a central simple algebra $A$ to its Brauer class $[A]$ is injective;
see \cite[Lemma 2.4.4 and Proposition 2.7.9]{gille-szamuely}
or the paragraph following formula (29.10)  on p.~396 in~\cite{involutions}.
\end{itemize}
For $G = \PGL_n$, Proposition~\ref{p:ab-n}(iii)
tells us that for any  global  field $K$, our power operation $\di$ on $\Hon(K, \PGL_n)$
is induced by the power operation in the Brauer group $\Br(K)$.
That is, if $\xi \in \Hon(K, \PGL_n)$ is represented by the central simple $K$-algebra $A$ of degree $n$,
then $\xi^{\di d}$ is represented by the unique (up to isomorphism)
central simple $K$-algebra $B$ of degree $n$
that is Brauer-equivalent to
$A^{\otimes d}$.
\end{example}

We conclude this section by showing that the power operation $\di$
of Construction~\ref{con:power} is, in fact, unique.

\begin{proposition} \label{p:uniqueness}
Let $K$ be a local or global  field and $n$ be an integer.
Suppose that for any reductive group $G$ over $K$ we have a power map
$p_{n,G} \colon \Hon(K, G) \to \Hon(K, G)$ with the following two properties:
\begin{enumerate}
\item[\rm (i)] $p_{n,G}$ is functorial in $G$, and\smallskip
\item[\rm (ii)] It $T$ is a torus defined over $K$, then
$p_{n,T}(\xi) = \xi^n$ for every $\xi \in \Hon(K,T)$.
Here $\xi^n$ denotes the $n$-th power of $\xi$ in the abelian group $\Hon(K, T)$.
\end{enumerate}
Then $p_{n,G}(\xi) = \xi^{\di n}$ for every $\xi \in \Hon(K, G)$.
\end{proposition}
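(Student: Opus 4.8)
The plan is to reduce to the case of a torus by means of Lemma~\ref{t:Bor98-T}, exactly as in the proof of Proposition~\ref{p:ab-n}. Fix a reductive $K$-group $G$ and a class $\xi\in\Hon(K,G)$. By Lemma~\ref{t:Bor98-T} there is a maximal torus $i\colon T\into G$ defined over $K$ and a class $\xi_T\in\Hon(K,T)$ with $\xi=i_*(\xi_T)$. The whole argument then consists in transporting the power operation through $i_*$ in both directions: first through the hypothetical operation $p_{n,\bullet}$, using hypothesis (i), and then back through the operation $\di$ of Construction~\ref{con:power}, using its functoriality in $G$ established in Proposition~\ref{d6}.

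Concretely, I would argue as follows. Applying hypothesis (i) to the homomorphism $i\colon T\to G$ gives
\[ p_{n,G}(\xi)=p_{n,G}\big(i_*(\xi_T)\big)=i_*\big(p_{n,T}(\xi_T)\big). \]
By hypothesis (ii), $p_{n,T}(\xi_T)=\xi_T^{\,n}$, the $n$-th power in the abelian group $\Hon(K,T)$, and by Example~\ref{ex:torus} this equals $\xi_T^{\di n}$. Finally, Proposition~\ref{d6} (functoriality of $\di$ in $G$) yields
\[ i_*\big(\xi_T^{\di n}\big)=\big(i_*(\xi_T)\big)^{\di n}=\xi^{\di n}. \]
Concatenating these identities gives $p_{n,G}(\xi)=\xi^{\di n}$, which is the assertion of the proposition.

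I do not anticipate a genuine obstacle here: the statement is a formal consequence of Lemma~\ref{t:Bor98-T} together with two functoriality facts (hypothesis (i) for $p_{n,\bullet}$ and Proposition~\ref{d6} for $\di$) and the already proved identity $\xi_T^{\di n}=\xi_T^{\,n}$ on tori (Example~\ref{ex:torus}). The only point worth flagging is that the argument uses \emph{only} hypotheses (i) and (ii) as stated — it does not appeal to $1^{\di n}=1$ or to the composition rule $(\xi^{\di m})^{\di n}=\xi^{\di mn}$, which are properties of $\di$ verified separately in Proposition~\ref{p:ab-n}(v) — so the uniqueness holds under exactly the two hypotheses listed.
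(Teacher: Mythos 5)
Your proof is correct and is essentially identical to the paper's: both reduce to a maximal torus via Lemma~\ref{t:Bor98-T}, apply hypothesis (i) to $i_*$, use (ii) together with Example~\ref{ex:torus} on the torus, and finish with the functoriality of $\di$ from Proposition~\ref{d6}. No issues.
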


In particular, the functoriality in $K$ (Proposition \ref{p:ab-n}(i)) is a formal consequence of (i) and (ii).

\begin{proof} Let $\xi \in \Hon(K, G)$. By Lemma \ref{t:Bor98-T} there exists
a maximal torus $i\colon T \into G$ defined over $K$ and a cohomology class
$\xi_T \in \Hon (K, T)$ such that $\xi=i_*(\xi_T)$.
By (ii) we have $p_{n,T}(\xi_T) = \xi_T^n = \xi_T^{\di n}$.
Now (i) tells us that
\[
p_{n,G}(\xi)=p_{n,G}\big(i_*(\xi_T)\big) =i_*\big(p_{n,T}(\xi_T)\big)
  =i_*( \xi_T^{n})=\big(i_*(\xi_T)\big)^{\di n}=\xi^{\di n}. \qedhere\]
\end{proof}

We conclude this section with a proof of Theorem \ref{t:non-ex},
which asserts that it is impossible to define
a power map as in Question \ref{q:main} over an arbitrary field.
As we shall see, Theorem \ref{t:non-ex} is an easy consequence
of Proposition~\ref{p:Gille} due to Philippe Gille.

\begin{proof}[Deduction of Theorem \ref{t:non-ex} from Proposition \ref{p:Gille}]
Assume the contrary: there exists a functorial power map $\di n$
satisfying condition (i) of Question~\ref{q:main}. Let $G$ be a reductive $K$-group,
$\iota\colon T\into G$ be a $K$-torus in $G$ and
$\iota_*\colon \Hon(K,T)\to \Hon(K,G)$ be the induced map.
As we remarked after the statement of
Question~\ref{q:main}, $1_\sG^{\di n} = 1_\sG$ for any integer $n$,
where  $1_\sG\in \Hon(K,G)$ denotes the neutral cohomology class.
This tells us that if $\xi\in \Hon(K,T)$ is such that  $\iota_*(\xi)=1_\sG$, then
\[ \iota_*(\xi^n)=1_\sG^{\di n}= 1_\sG\hs. \]
On the other hand, by Proposition~\ref{p:Gille}, there exist
$K$, $G$, $\iota\colon T\into G$, and $\xi\in \Hon(K,T)$
such that $\iota_*(\xi) = 1_\sG$ but $\iota_*(\xi^2) \neq 1_\sG$.
This contradiction completes the proof of Theorem \ref{t:non-ex}.
\end{proof}

\section{An explicit description of the power map}
\label{s:explicit}

In Construction \ref{con:power}, when defining the power map ${\di n}$,
we used the description of the pointed set $\Hon(K,G)$ as the fiber product
of $\Ho^1_\ab(K,G)$ and $\Hon(K_\infty,G)$.
Let $T\subset G$ be a maximal torus.
Then
\[\Ho^1_\ab(K,G)\coloneqq\HH^1(K,T^\ssc\to T)=
      \HH^1\big(\G(K^s/K), \big(\X_*(T^\ssc)\to \X_*(T)\big)\otimes (K^s)^\times\big).\]
In this section we give a simpler description of the pointed set $\Hon(K,G)$,
without mentioning the $\G(K^s/K)$-module $(K^s)^\times$; see Theorem \ref{t:BK} below.
This leads to a description of the power map for $\Hon(K, G)$ in Proposition~\ref{t:explicit}
that is more concrete  and more amenable to computations than our original definition.
We shall use this description  in Section~\ref{sect.examples}.

Let $\G$ be a group and let $A$ be a $\G$-module,
that is, an abelian group on which $\G$ acts by $(\ga,a)\mapsto \ga\cdot a$.
Consider the functor
\[A\,\functor\, A_\Gt\hs.\]
Here $A_\G$ is the {\em group of coinvariants of $\G$ in $A$}, that is,
\[A_\G\coloneqq A/\langle \ga\cdot a-a\ |\ a\in A,\, \ga\in\G\rangle,\]
and $A_\Gt\coloneqq (A_\G)_\tors$, the torsion subgroup of $A_\G$.

Let $G$ be a reductive group over a local or  global  field $K$.
In each case, there is an explicit description of the pointed set $\Hon(K,G)$.
Let $d\in\Z$ be an integer.
Below we give an explicit description of the power map
\begin{equation}\label{e:di-d}
\di d\colon \Hon(K,G)\to\Hon(K,G),\quad \xi \, \mapsto \, \xi^{\di d}
\end{equation}
in each case.

When $K=\R$,  there is an explicit description of $\Hon(\R,G)$ in \cite[Theorem 7.14]{BT-IJM}.
However, we do not need it here. The map  $\di d$ of \eqref{e:di-d} is the identity map if $d$ is odd,
and sends all elements to 1 if $d$ is even.

In the other cases, consider the $\G(K^s/K)$-module $M\coloneqq\pi_1^\alg(G)$.
Choose a finite Galois extension $L/K$ in $K^s$ such that
$\G(K^s/L)$ acts on $M$ trivially.
Write $\G=\G(L/K)$, which is a finite group naturally acting on $M$.

When $K$ is a non-archimedean local field,  we have a canonical bijection
\begin{equation} \label{e.bk6.5.2}
\Hon(K,G)\isoto M_\Gt\hs  ;
\end{equation}
see~\cite[Theorem 5.5.2(1)]{BK}.

\begin{proposition}\label{p:explicit}
When $K$ is a non-archimedean local field,
the following diagram is commutative:
\[
\xymatrix@C=14mm{
\Hon(K,G)\ar[r]^-{\di d} \ar[d]_-\sim  &\Hon(K,G)\ar[d]^-\sim\\
M_\Gt\ar[r]^-{\bdot d}             &M_\Gt
}
\]
where \,$\bdot d\colon M_\Gt\to M_\Gt$ \,is the endomorphism of multiplication by $d$.
\end{proposition}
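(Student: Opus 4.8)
The plan is to reduce the statement to Definition~\ref{d:nal-gf}. For $K$ non-archimedean local, that definition characterises the operation $\di d$ on $\Hon(K,G)$ by the identity
\[\ab(\xi^{\di d})=\ab(\xi)^d,\qquad \xi\in\Hon(K,G),\]
where $\ab\colon\Hon(K,G)\to\Hon_\ab(K,G)$ is the abelianization map, which is moreover bijective in the local case. So what I really need is to understand how the canonical bijection $\Hon(K,G)\isoto M_\Gt$ of \eqref{e.bk6.5.2} is built out of $\ab$ and the group structures on the two sides.

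First I would recall the precise form of \cite[Theorem 6.5.2(1)]{BK}: the bijection \eqref{e.bk6.5.2} is the composite of $\ab$ with an \emph{isomorphism of abelian groups}
\[j\colon\ \Hon_\ab(K,G)=\H^1(K,T^\ssc\to T)\ \isoto\ M_\Gt,\]
the latter identification being the one furnished by local abelian duality of Tate--Nakayama type (see \cite[Section 6]{BK}, with antecedents in \cite[Section 5]{Borovoi-Memoir}). In particular, each of the two vertical ``$\sim$'' arrows in the square of Proposition~\ref{p:explicit} is $j\circ\ab$.

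Granting this, commutativity is a one-line formal check. Fix $\xi\in\Hon(K,G)$ and let $m=j(\ab(\xi))\in M_\Gt$ be its image under \eqref{e.bk6.5.2}. Then
\[j\big(\ab(\xi^{\di d})\big)=j\big(\ab(\xi)^d\big)=d\cdot j(\ab(\xi))=d\cdot m=(\bdot d)(m),\]
where the first equality is Definition~\ref{d:nal-gf} and the second uses that $j$ is a group homomorphism. This is exactly the assertion that the square commutes.

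The only step that genuinely requires care — hence the main, if mild, obstacle — is the claim in the second paragraph: that the canonical bijection of \cite[Theorem 6.5.2(1)]{BK} really does factor as $j\circ\ab$ with $j$ a \emph{group} isomorphism, and is not merely a set bijection that happens to coincide with it. I would settle this by reading it off the construction of \eqref{e.bk6.5.2} in \cite[Section 6]{BK}: there $\Hon_\ab(K,G)$ is identified with $M_\Gt$ as abelian groups, and $\ab$ is then shown to be bijective, so the asserted factorisation is automatic. No genuinely new argument is needed beyond this bookkeeping; one could alternatively avoid $j$ altogether by transporting the group structure from $\Hon_\ab(K,G)$ to $M_\Gt$ along the composite bijection, but invoking the standard duality isomorphism is cleaner.
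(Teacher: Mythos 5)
Your proof is correct and is essentially the paper's own argument: the paper stacks two rectangles, the top commuting by Definition~\ref{d:nal-gf} and the bottom because the Tate--Nakayama isomorphism $\phi$ of \cite[Theorem 5.2.8(1)]{BK} (your $j$, up to direction) is a group isomorphism. Your equation chain $j(\ab(\xi^{\di d}))=j(\ab(\xi)^d)=d\cdot j(\ab(\xi))$ is exactly that diagram written out.
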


\begin{proof}
We have a diagram
\[
\xymatrix@C=14mm{
\Hon(K,G)\ar[r]^-{\di d} \ar[d]^\ab_-\sim     &\Hon(K,G)\ar[d]^-\ab_-\sim\\
\Hon_\ab(K,G)\ar[r]^-{\bdot d} \ar@{<-}[d]^-\phi_-\sim  &\Hon_\ab(K,G)\ar@{<-}[d]^-\phi_-\sim\\
M_\Gt\ar[r]^-{\bdot d}             &M_\Gt
}
\]
where $\phi$ is the isomorphism of \cite[Theorem 5.5.2(1)]{BK}.
Here the top rectangle commutes by the definition of the power map $\di d$
over a non-archimedean local field,
and the bottom rectangle commutes because $\phi$ is a group isomorphism.
Thus the diagram of the proposition commutes, which completes the proof.
\end{proof}

Now let $K$ be a  global  field (a number field or a function field), and let
$M$, $L/K$, and $\G$ be as above.
Let $\V_L$ denote the set of all places of $L$.
Consider the group of {\em finite} formal linear combinations
\[ M[\V_L]=\bigg\{\sum_{w\in \V_L} m_w\cdot w\ \big |\  m_w\in M\bigg\},\]
(here $m_w=0$ for almost all $w$),  and the subgroup
\[ M[\V_L]_0=\bigg\{\sum_{w\in \V_L} m_w\cdot w\in M[\V_L]
   \ \ \,  \Big | \ \sum_{w\in \V_L} m_w=0\bigg\}. \]
The finite group $\G=\G(L/K)$ acts on $M$ and on $\V_L$,
and so it naturally acts on $M[\V_L]$ and on $M[\V_L]_0$.
Consider the functor
\[G\ \functor\ M\ \functor \big(M[\V_L]_0\big)_\Gt\hs.\]
This functor does not depend on the choice of $L$; see \cite[Lemma 4.3.1]{BK}.
For any $v\in\V_\infty(K)$, choose a place $\bv$ of $L$ over $v$
and let $\G_{\bv}$ be the corresponding decomposition group,
that is, the stabilizer of $\bv$ in $\G$.
Now consider the functor
\[G\ \functor\ M\ \functor\!\!\! \bigoplus_{v\in\V_\infty(K)}\!\!\! M_\Gbvt\hs.\]
We have natural maps
\begin{equation}\label{e:H1-BK}
\xymatrix@1{
\big(M[\V_L]_0\big)_\Gt\ \ar[r]^-{l_\infty}
     &\!\!\displaystyle{\bigoplus\limits_{v\in\V_\infty(K)}\!\!\! M_\Gbvt}\ \,
     &\ \Hon(K_\infty\hs, G),\ar[l]_(.4){\theta_\infty}
}
\end{equation}
defined as follows. For each $v\in \V_\infty(K)$, we consider the ``localization'' homomorphism
\[ l_v\colon \big(M[\V_L]_0\big)_\Gt\to\,  M_\Gbvt\, ,
\quad \ \Bigg[\,\sum_{w\in\V_L} m_w\cdot w\,\Bigg]\ \mapsto \,[m_\bv]\hs;\]
then the homomorphism $l_\infty$ in \eqref{e:H1-BK}
is the product of the homomorphisms $l_v$ over $v\in \V_\infty(K)$.
Moreover, for each $v\in \V_\infty(K)$, we consider the map
\begin{equation}\label{e:Ab-Ho}
\theta_v\colon\Hon(K_v,G)\labelto{\ab} \Ho^1_\ab(K_v,G)\overset\sim\longleftrightarrow
    \Ho^{-1}(\G\hm_\bv,M)\hs\into\hs M_\Gbvt\hs,
\end{equation}
where $\Ho^{-1}(\G\hm_\bv,M)\labelto\sim \Ho^1_\ab(K_v,G)$ is the Tate-Nakayama isomorphism;
see \cite[Proposition 8.21]{BT-IJM}.
See~\cite[Chapter IV, Section 6]{CF} for the definition of the Tate cohomology groups $ \Ho^{-1}$.
The rightmost arrow in \eqref{e:Ab-Ho} is a special case of the inclusion of
Tate (-1)-cohomology
$$\Ho^{-1}(\Delta,A)\into A_{\Delta,\Tors}\into A_\Delta$$
for a group $\Delta$ and a $\Delta$-module $A$.
Then the map $\theta_\infty$ in \eqref{e:H1-BK} is
the product of the maps $\theta_v$ over $v\in \V_\infty(K)$.

\begin{definition}
The pointed set $\cH^1(G/K)$ is the fiber product of the two maps in \eqref{e:H1-BK}.
\end{definition}

We need the following result:

\begin{theorem}[\hs{\cite[Theorem 5.5.10]{BK}}\hs]
\label{t:BK}
For a reductive group $G$  over a  global  field $K$,
with algebraic fundamental group $M=\pi_1^\alg(G)$,
there is a canonical bijection
$\Hon(K,G)\isoto\cH^1(G/K)$,
functorial in $G$ and $K$.
\end{theorem}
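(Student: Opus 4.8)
The plan is to deduce the theorem from the Cartesian square of Theorem~\ref{t:cd}, together with the global and local Tate--Nakayama descriptions of abelian Galois cohomology in terms of the $\G$-module $M=\pi_1^\alg(G)$. I would first dispose of the function field case, where $\V_\infty(K)=\varnothing$: here the abelianization map $\ab\colon\Hon(K,G)\to\Hon_\ab(K,G)$ is a bijection and $\cH^1(G/K)=\big(M[\V_L]_0\big)_\Gt$, so it is enough to produce a canonical isomorphism $\Hon_\ab(K,G)=\H^1(K,T^\ssc\to T)\isoto\big(M[\V_L]_0\big)_\Gt$ for a maximal torus $T\subset G$. One computes the hypercohomology of the two-term complex of tori $T^\ssc\to T$ via the idele class group and invokes global class field theory (Poitou--Tate duality for complexes of $K$-tori); the output is expressed through $M=\X_*(T)/\rho_*\X_*(T^\ssc)$ as $\big(M[\V_L]_0\big)_\Gt$. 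The point one must check is that this isomorphism is independent of $T$ and $L$, which follows from the quasi-isomorphism $(T^\ssc\to T)\to(G^\ssc\to G)$ recalled in Subsection~\ref{ss:H^1_ab}.

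For a number field $K$, Theorem~\ref{t:cd} identifies $\Hon(K,G)$ with the fibered product of $\Hon_\ab(K,G)$ and $\Hon(K_\infty,G)$ over $\Hon_\ab(K_\infty,G)$ along $\loc_\infty$ and $\ab$. I would then identify, compatibly with these maps: (a) $\Hon_\ab(K,G)\isoto\big(M[\V_L]_0\big)_\Gt$ exactly as above; (b) for each $v\in\V_\infty(K)$, the local Tate--Nakayama isomorphism $\Ho^{-1}(\G_\bv,M)\isoto\Hon_\ab(K_v,G)$, which yields $\Hon_\ab(K_\infty,G)\isoto\bigoplus_{v\in\V_\infty(K)}M_\Gbvt$; (c) that $\loc_\infty$ on abelian cohomology corresponds to the combinatorial localization $l_\infty$ of~\eqref{e:H1-BK}, namely $\big[\sum_w m_w\cdot w\big]\mapsto([m_\bv])_v$, which expresses the compatibility of the global and local duality pairings with restriction; and (d) that the abelianization map $\Hon(K_\infty,G)\to\Hon_\ab(K_\infty,G)$ becomes $\theta_\infty$, which is immediate from the definition of the $\theta_v$ in~\eqref{e:H1-BK}. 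Granting (a)--(d), the Cartesian square of Theorem~\ref{t:cd} transports term by term onto the defining square of $\cH^1(G/K)$, giving the canonical bijection $\Hon(K,G)\isoto\cH^1(G/K)$.

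For functoriality in $G$ and $K$, I would observe that every ingredient used --- the abelianization map, the Cartesian square (functorial in $G$ and $K$, as noted after Theorem~\ref{t:cd}), the global and local Tate--Nakayama isomorphisms, and the combinatorial maps $l_\infty$ and $\theta_\infty$ --- is natural in $G$ and with respect to finite separable extensions of global fields; assembling these naturalities yields functoriality of the resulting bijection.

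The main obstacle, I expect, is step (a) together with (c): showing that $\H^1(K,T^\ssc\to T)$ is canonically $\big(M[\V_L]_0\big)_\Gt$ and that localization at a place $v$ is the naive map $[\sum_w m_w\cdot w]\mapsto[m_\bv]$. This is the real content of the statement and relies on the full machinery of global class field theory; once it is in hand, the rest of the argument is formal diagram-chasing anchored on the Cartesian square of Theorem~\ref{t:cd}.
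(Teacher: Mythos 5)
This theorem is not proved in the paper at all: it is imported from \cite[Theorem 6.5.12]{BK}, and the surrounding text only recalls how the map $\phi_\cH=(\phi_M,\loc_\infty)$ is assembled. So there is no internal proof to measure your attempt against; what I can say is that your sketch follows the route actually taken in the cited source and implicitly in Section~\ref{s:explicit}: the Cartesian square of Theorem~\ref{t:cd} (itself \cite[Theorem 5.11]{Borovoi-Memoir}), the global Tate--Nakayama description of $\H^1(K,T^\ssc\to T)$ in terms of $M=\pi_1^\alg(G)$, the local Tate--Nakayama isomorphisms $\Ho^{-1}(\G_\bv,M)\isoto\Hon_\ab(K_v,G)$ at the archimedean places, and the compatibility of $\loc_v$ with $l_v$. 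Your steps (a)--(d) are the correct skeleton, and you rightly locate the substance in (a) and (c).

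One point needs to be stated more carefully. In (a) you assert a canonical isomorphism $\Hon_\ab(K,G)\isoto\big(M[\V_L]_0\big)_\Gt$, but the paper (quoting \cite[Theorem 5.3.18(1)]{BK}) records only that $\psi_M\colon\Hon_\ab(K,G)\to\big(M[\V_L]_0\big)_\Gt$ is an \emph{injective} homomorphism; for a number field with real places the identification of its image is exactly where the archimedean local conditions enter, and it is not an unconditional bijection onto the target. This does not derail the transport of the Cartesian square: any pair $(x,\eta)$ in the fiber product $\cH^1(G/K)$ satisfies $l_\infty(x)=\theta_\infty(\eta)\in\bigoplus_v\Ho^{-1}(\G_\bv,M)$, so the fiber product taken over $\bigoplus_v M_\Gbvt$ agrees with the one over $\Hon_\ab(K_\infty,G)$ appearing in Theorem~\ref{t:cd} --- \emph{provided} one knows that $\im\psi_M$ consists precisely of the classes whose archimedean localizations lie in the images of the local Tate--Nakayama maps. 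That identification of $\im\psi_M$, not injectivity alone, is what makes your map surjective onto $\cH^1(G/K)$, and it should be invoked explicitly rather than folded into a claimed isomorphism. With that adjustment (and with the function-field case handled as you do, where $\ab$ is bijective and the archimedean conditions are vacuous, so $\psi_M$ is indeed bijective), your outline is a faithful reconstruction of the proof of \cite[Theorem 6.5.12]{BK}.
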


To specify the bijection of the theorem, we need to construct two maps:
\[\phi_M\colon \Hon(K,G)\to \big(M[\V_L]_0\big)_\Gt\quad \text{and}
\quad\ \loc_\infty\colon \Hon(K,G)\to\Hon(K_\infty,G).\]
The map $\phi_M$ is the composition
\[\Hon(K,G)\labelto{\ab} \Ho^1_\ab(K,G)\labelto{\psi_M} \big(M[\V_L]_0\big)_\Gt\hs\]
where $\psi_M$ is the injective {\em homomorphism}
of \cite[Theorem 4.3.14(1) and Remark 4.3.8]{BK}.
The  (surjective) infinite localization map $\loc_\infty$
is the map given by formula \eqref{e:loc-inf} in Section \ref{s:infinite}.
We obtain a map
\[\phi_\cH=(\phi_M,\loc_\infty)\colon\, \Hon(K,G)\isoto\cH^1(G/K),\]
which is bijective by the theorem.

\begin{proposition}\label{t:explicit}
Let $G$ be a reductive group with algebraic fundamental group $M=\pi_1^\alg(G)$,
defined over a  global  field $K$, and let $d\in\Z$ be an integer.
Then the power map $$\di d\colon \Hon(K,G)\to\Hon(K,G)$$
fits into the following commutative diagram:
\[
\xymatrix@C=18mm{
\Hon(K,G)\ar[r]^-{\di d}\ar[d]^-{\phi_\cH}_-\sim & \Hon(K,G)\ar[d]^-{\phi_\cH}_-\sim\\
\cH^1(G/K)\ar[r]^-{(\hs\bdot d,\hs\dit d\hs)}            & \cH^1(G/K)
}
\]
where the self-map $(\hs\bdot d,\hs\dit d\hs)\colon \cH^1(G/K) \to \cH^1(G/K)$
is given by the maps
\[ \bdot d\colon \big(M[\V_L]_0\big)_\Gt\to \big(M[\V_L]_0\big)_\Gt\hs\quad\ \text{and}
\quad\ \dit d\colon \Hon(K_\infty,G) \to \Hon(K_\infty,G).\]
Here $\bdot d$ is the endomorphism of multiplication by $d$
in the abelian group $\big(M[\V_L]_0\big)_\Gt$\hs,
and $\dit d$ is the map of Section \ref{s:real}.
\end{proposition}

\begin{proof}
We need to show that the following two diagrams commute:
\begin{equation}\label{e:di-d-bdot-d}
\begin{aligned}
\xymatrix@C=12mm{
\Hon(K,G)\ar[r]^-{\di d}\ar[d]^-{\phi_M} & \Hon(K,G)\ar[d]^-{\phi_M}
& \Hon(K,G)\ar[r]^-{\di d}\ar[d]^-{\loc_\infty} & \Hon(K,G)\ar[d]^-{\loc_\infty}\\
 \big(M[\V_L]_0\big)_\Gt\ar[r]^-{\bdot d}            & \big(M[\V_L]_0\big)_\Gt
& \Hon(K_\infty,G)\ar[r]^-{\dit d}            & \Hon(K_\infty,G)
}
\end{aligned}
\end{equation}
The diagram at right commutes by the definition of the power map $\di d$.
To prove the commutativity of the diagram at left, we consider the following diagram:
\[
\xymatrix@C=13mm{
\Hon(K,G)\ar[r]^-{\di d}\ar[d]^-\ab & \Hon(K,G)\ar[d]^-\ab\\
\Ho^1_\ab(K,G) \ar[r]^-{x\hs\mapsto x^d}\ar[d]^-{\psi_M} & \Ho^1_\ab(K,G)\ar[d]^-{\psi_M} \\
\big(M[\V_L]_0\big)_\Gt\ar[r]^-{\bdot d}            & \big(M[\V_L]_0\big)_\Gt
}
\]
In this diagram, the top rectangle commutes by the definition of the power map $\di d$,
and the bottom rectangle commutes because $\psi_M$ is a group homomorphism.
This proves the commutativity of the diagram at left in \eqref{e:di-d-bdot-d},
and completes the proof of the proposition.
\end{proof}

\section{The period divides the index}
\label{s:period}

Let $G$ be a reductive group over a local or global field $K$ and let $\xi\in \Hon(K,G)$.
In this section we recall the definitions of the period and the index of $\xi$ from the Introduction
and show that the period divides the index.

\begin{definition} \label{def:index}
 Let $\xi\in \Hon(K,G)$ where  $G$ is a reductive group over an arbitrary field $K$.
The {\em index} $\ind(\xi)$ is the greatest common divisor
of the degrees $[L:K]$ as $L/K$ ranges over finite field extensions  that split $\xi$.
\end{definition}

\begin{remark} \label{rem.degree} One can also define the {\em separable  index} $\ind_\sep(\xi)$
as the greatest common divisor  of the degrees $[L: K]$
where $L/K$ ranges over all finite {\em separable} field extensions splitting $\xi$.
Note that if $X \to \Spec(K)$ is a torsor representing
$\xi \in H^1(K, G)$, then $\ind(\xi)$  (resp.,  $\ind_\sep(\xi)$\hs)
is the same as the index $\delta(X/K)$ of $X$ (resp., the separable index $\delta_\sep(X/K)$\hs)
in the sense of~\cite[Section 9]{GLL}.
Therefore, we have
\[ \ind(\xi)=\delta(X/K)=\delta_\sep(X/K)=\ind_\sep(\xi),\]
where the middle equality is a special case of \cite[Theorem 9.2]{GLL}
(in fact, \cite[Theorem 9.2]{GLL} establishes this equality for any regular,
generically smooth, non-empty scheme $X$ of finite type over $K$).
We thus conclude that $\ind(\xi)$ is the greatest common divisor  of the degrees $[L: K]$
of all finite {\em separable} field extensions $L/K$ splitting $\xi$.
\end{remark}

\begin{definition} \label{def:period}
Suppose $K$ is a local or  global  field.
By the {\em period} $\per(\xi)$ we mean the greatest common divisor
of all positive  integers $d$ such that $\xi^{\di d}=1\in \Hon(K,G)$.
\end{definition}

\begin{example} \label{ex.PGLn-b} Suppose $G = \PGL_n$ and $K$ is a  global  field.
Recall from Example~\ref{ex.PGLn} that elements of
$\Hon(K, \PGL_n)$ are in a natural bijective correspondence
 with isomorphism classes of central simple algebras
of degree $n$ over $K$. Denote by $A$ a central simple algebra associated to
$\xi \in \Hon(K, \PGL_n)$.
Then a field extension $L/K$ splits $\xi$ if and only if it splits $A$.
In particular, the index of $\ind(\xi)$
is the same as the index of $A$;
see~\cite[Proposition 4.5.1]{gille-szamuely}.
Moreover, by Example~\ref{ex.PGLn}, for any integer $d$ the class
$\xi^{\di d} \in \Hon(K, \PGL_n)$ corresponds to a unique (up to isomorphism) central simple $K$-algebra $B$
of degree $n$ that is Brauer-equivalent to $A^{\otimes d}$.
In particular, $\xi^{\di d} = 1$ in $\Hon(K, \PGL_n)$
if and only if $A^{\otimes d}$ is split over $K$.
We conclude that the period of $\xi$ is the same as the period of $A$,
or, equivalently, as the period of the class of $A$
in the Brauer group $\Br(K)$; see \cite[Definition 2.8.6]{gille-szamuely}.
\end{example}

\begin{proposition}\label{p:xi^per}
Let $\xi\in\Hon(K,G)$ where $G$ is a reductive group over a local or  global  field $K$.
Then
$\displaystyle \xi^{\di \per(\xi)}=1$.
\end{proposition}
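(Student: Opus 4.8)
The plan is to prove that the subset
$$S_\xi = \{\, n \in \Z \ \mid\ \xi^{\di n} = 1 \,\}$$
of $\Z$ is a subgroup containing a positive integer. Granting this, $S_\xi = m\Z$ for a unique $m \ge 1$, and then $\per(\xi)$ --- by Definition~\ref{def.period-index}(b) the greatest common divisor of the positive integers in $S_\xi = \{m, 2m, 3m, \dots\}$ --- equals $m$, whence $\xi^{\di \per(\xi)} = \xi^{\di m} = 1$, as required. So everything reduces to this structural claim about $S_\xi$.

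To prove it I would use the explicit descriptions of $\Hon(K,G)$ and of the power map from Section~\ref{s:explicit}, together with the definition of $\di$ over $\R$ from Section~\ref{s:real}. The uniform picture is that $\xi$ is encoded, via an \emph{injective} map, by a pair $(a, \xi_\infty)$ in which $a$ lies in a \emph{torsion} abelian group and $\xi_\infty$ lies in a (finite) product of pointed sets, in such a way that $\xi^{\di d}$ is encoded by $\big(d\cdot a,\ \xi_\infty^{\dit d}\big)$. Precisely: if $K = \C$ then $\Hon(K,G) = 1$ and there is nothing to prove; if $K = \R$ take $a = 0$ and $\xi_\infty = \xi$, so that $\xi^{\di d} = \xi^{\dit d}$ by definition; if $K$ is a non-archimedean local field take $\xi_\infty = 1$ and $a$ the image of $\xi$ under the bijection $\Hon(K,G) \isoto M_\Gt$, so that $\xi^{\di d}$ corresponds to $d\cdot a$ by Proposition~\ref{p:explicit}; and if $K$ is a global field (number field or function field) take $(a, \xi_\infty) = \big(\phi_M(\xi), \loc_\infty(\xi)\big) \in \cH^1(G/K)$, which determines $\xi$ since $\phi_\cH$ is bijective and which is carried to $\big(d\cdot a,\ \xi_\infty^{\dit d}\big)$ by Theorem~\ref{t:explicit}. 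In each case the ambient abelian group ($0$, or $M_\Gt$, or $(M[\V_L]_0)_\Gt$) is torsion by construction.

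It then remains to observe that $\xi^{\di d} = 1$ holds exactly when $d\cdot a = 0$ and $\xi_\infty^{\dit d} = 1$. The first condition describes the subgroup $\Ann_\Z(a) = \ord(a)\cdot\Z$ of $\Z$, with $\ord(a)$ finite since $a$ is a torsion element. The second condition, since $x^{\dit d}$ depends only on the parity of $d$ and equals the neutral element precisely when $d$ is even or $x$ is neutral, describes the subgroup $\Z$ (if $\xi_\infty$ is neutral) or $2\Z$ (otherwise). Hence $S_\xi$ is an intersection of two subgroups of $\Z$ each containing a positive integer, so it is itself a subgroup of $\Z$ containing a positive integer, which finishes the proof.

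I expect the genuine content --- and the reason the argument cannot be made purely formally from the axioms $1^{\di n} = 1$ and $(\xi^{\di m})^{\di n} = \xi^{\di mn}$ --- to be precisely the closure of $S_\xi$ under addition (equivalently, under greatest common divisors): those axioms give only closure under multiples, and upgrading this to subgroup-hood requires the explicit parametrization of Section~\ref{s:explicit}. The remaining bookkeeping (splitting into the archimedean, non-archimedean local, and global cases, and noting that the relevant coinvariant-type groups are torsion) is routine.
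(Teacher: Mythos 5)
Your argument is correct. The route is a close cousin of the paper's but packaged differently: you show that $S_\xi = \{n \in \Z : \xi^{\di n} = 1\}$ is a subgroup of $\Z$ containing a positive integer and conclude formally, whereas the paper checks directly that $\per(\xi) \in S_\xi$ by separating the abelian part from the archimedean part and, for the latter, splitting into the cases $\per(\xi)$ even and $\per(\xi)$ odd. The underlying facts are the same (the abelian coordinate is genuinely additive in $n$, and the archimedean coordinate depends only on the parity of $n$); your ``subgroup'' framing simply absorbs the parity case distinction into one sentence. The main technical difference is which machinery you cite: you go through the explicit parametrizations of Section~\ref{s:explicit} (the bijection $\Hon(K,G)\isoto M_\Gt$ with Proposition~\ref{p:explicit} for non-archimedean local $K$, and $\cH^1(G/K)$ with Theorem~\ref{t:explicit}, hence Theorem~\ref{t:BK}, for global $K$), while the paper works at the level of the abelianization map and the Cartesian square, using $\ab(\xi^{\di n})=(\ab\,\xi)^n$ from Proposition~\ref{p:ab-n}(iv), the bijectivity of $\ab$ in the non-archimedean local and global function field cases, and Theorem~\ref{t:cd} for number fields. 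Your version is marginally heavier on prerequisites but has the small virtue of making explicit why $S_\xi$ contains a positive integer (the groups $M_\Gt$ and $(M[\V_L]_0)_\Gt$ are torsion by construction), a point the paper leaves implicit in defining $\per(\xi)$ as a gcd.
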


\begin{proof}
First consider the case $K=\R$.
If $\per(\xi)=2$, then clearly we have $\xi^{\di \per(\xi)}=\xi^{\dit 2}=1$.
If $\per(\xi)=1$, then there exists an {\em odd} integer  $d>0$ such that $\xi^{\di d}=1$,
whence $\xi=1$ and $\xi^{\di \per(\xi)}=1$.

For a local or  global  field $K$ other than $\R$ and $\C$, write $\xi_\ab=\ab(\xi)\in\Ho^1_\ab(K,G)$.
Since $\per(\xi)$ is a linear combination of integers $n\in\Z$ such that $\xi^{\di n}=1$ (whence $(\xi_\ab)^n=1$),
we see that
\begin{equation}\label{e:ab-per}
 (\xi_\ab)^{\per(\xi)}=1.
\end{equation}
If $K$ is a non-archimedean local field of a  global  function field,
then from \eqref{e:ab-per}  it follows that $\xi^{\di \per(\xi)}=1$.

If $K$ is a number field, we write $\xi_\infty=\loc_\infty(\xi)\in \Hon(K_\infty, G)$.
If $\per(\xi)$ is even, then automatically we have $(\xi_\infty)^{\dit \per(\xi)}=1$,
and using \eqref{e:ab-per} we conclude that $\xi^{\di \per(\xi)}=1$.
If $\per(\xi)$ is odd, then there exists an {\em odd} integer $n$ such $\xi^{\di n}=1$,
whence $(\xi_\infty)^{\dit n}=1$ and $\xi_\infty=1$.
It follows that $(\xi_\infty)^{\dit \per(\xi)}=1$, and using \eqref{e:ab-per} we conclude that $\xi^{\di \per(\xi)}=1$.
\end{proof}

\begin{corollary}\label{c:xi^per}
For $\xi\in \Hon(K,G)$ where $G$ is a reductive group over a local or global field $K$,
the period $\per(\xi)$ is the least integer $n\ge 1$ such that $\xi^{\di n}=1$.
\end{corollary}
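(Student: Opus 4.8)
The plan is to deduce Corollary~\ref{c:xi^per} formally from Proposition~\ref{p:xi^per} together with the divisibility relation $(\xi^{\di m})^{\di n}=\xi^{\di mn}$ from Question~\ref{q:main} (equivalently, Proposition~\ref{p:ab-n}(v)). Let $n_0\ge 1$ be the least positive integer with $\xi^{\di n_0}=1$; such an integer exists because $\xi^{\di\per(\xi)}=1$ by Proposition~\ref{p:xi^per} and $\per(\xi)\ge 1$. We must show $n_0=\per(\xi)$. On the one hand, $n_0$ is one of the positive integers $d$ with $\xi^{\di d}=1$, so by the definition of $\per(\xi)$ as the greatest common divisor of all such $d$, we have $\per(\xi)\mid n_0$, hence $\per(\xi)\le n_0$.

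For the reverse inequality, it suffices to show that $n_0$ divides every positive integer $d$ with $\xi^{\di d}=1$; this will force $n_0\mid\per(\xi)$ (indeed, $n_0$ divides the gcd of all such $d$), and combined with $\per(\xi)\le n_0$ and both being positive, we get $n_0=\per(\xi)$. So fix $d\ge 1$ with $\xi^{\di d}=1$, and write $d=qn_0+r$ with $0\le r<n_0$ by Euclidean division. Using the relations $\xi^{\di(a+b)}$ behaves additively after passing to the relevant abelian invariants — more precisely, I would argue via the defining data of Construction~\ref{con:power}. Apply $\ab$ and (if $K$ is a number field) $\loc_\infty$: by Proposition~\ref{p:ab-n}(iv), $\ab(\xi^{\di m})=\ab(\xi)^m$ in the abelian group $\Hon_\ab(K,G)$, and by the construction, $\loc_\infty(\xi^{\di m})=(\loc_\infty\xi)^{\dit m}$ in the pointed set $\Hon(K_\infty,G)$. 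From $\xi^{\di d}=1$ and $\xi^{\di n_0}=1$ we get $\ab(\xi)^d=1$ and $\ab(\xi)^{n_0}=1$, hence $\ab(\xi)^r=1$; likewise $(\loc_\infty\xi)^{\dit d}=1$ and $(\loc_\infty\xi)^{\dit n_0}=1$, and since $\dit$ is a genuine power operation in the pointed set (Subsection~\ref{ss:def}), $(\loc_\infty\xi)^{\dit r}=1$. Because diagram~\eqref{e:Cartesian} is Cartesian (Theorem~\ref{t:cd}), the element $\xi^{\di r}$ is the unique class with $\ab(\xi^{\di r})=\ab(\xi)^r=1$ and $\loc_\infty(\xi^{\di r})=(\loc_\infty\xi)^{\dit r}=1$; but $1\in\Hon(K,G)$ also has these images, so $\xi^{\di r}=1$. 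By minimality of $n_0$ and $0\le r<n_0$, we conclude $r=0$, i.e.\ $n_0\mid d$.

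In the remaining cases ($K$ a non-archimedean local field or a global function field) the same argument works more simply: $\ab$ is bijective, so $\xi^{\di m}=1$ iff $\ab(\xi)^m=1$, and the divisibility $n_0\mid d$ is immediate from the fact that the set of integers annihilating an element of an abelian group is an ideal of $\Z$. The case $K=\R$ (or $\C$) is trivial since $\per(\xi)\in\{1,2\}$ and the claim is checked by inspection, as already done inside the proof of Proposition~\ref{p:xi^per}. I do not anticipate a real obstacle here; the only point requiring care is organizing the case distinction on $K$ and invoking the Cartesian square correctly in the number field case, which is exactly the device used in Proposition~\ref{p:xi^per}.
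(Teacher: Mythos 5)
Your argument is correct, but its entire second half is unnecessary, and the paper's (implicit) proof stops where your first paragraph does. Having set $n_0$ to be the least positive integer with $\xi^{\di n_0}=1$, you correctly note that $\per(\xi)\mid n_0$, whence $\per(\xi)\le n_0$; for the reverse inequality you need only observe that Proposition~\ref{p:xi^per} exhibits $\per(\xi)$ itself as a positive integer $d$ with $\xi^{\di d}=1$, so the minimality of $n_0$ gives $n_0\le\per(\xi)$ at once. (Equivalently: the gcd of a nonempty set of positive integers divides, hence is at most, each of its elements, and Proposition~\ref{p:xi^per} says this gcd belongs to the set, so it is the minimum.) Your Euclidean-division argument, rerunning the Cartesian-square machinery of Construction~\ref{con:power} to show $\xi^{\di r}=1$, is sound — the identities $\ab(\xi^{\di m})=\ab(\xi)^m$ and $\loc_\infty(\xi^{\di m})=(\loc_\infty\xi)^{\dit m}$ together with Theorem~\ref{t:cd} do force $\xi^{\di r}=1$ — and it proves the slightly stronger statement that $\{d\ge 1: \xi^{\di d}=1\}$ is exactly the set of positive multiples of $\per(\xi)$. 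But none of that is needed for the corollary as stated. One small point worth making explicit in either version: the set of annihilating exponents is nonempty (so that the gcd and the minimum exist); this follows, e.g., from Lemma~\ref{t:Bor98-T} and the fact that $\Hon(K,T)$ is torsion, or simply from Proposition~\ref{p:xi^per} as you note.
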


\begin{theorem}[\hs Theorem \ref{thm:divides}\hs]
\label{t:divides}
Let $\xi\in \Hon(K,G)$ where $G$ is a reductive group over a local or  global  field $K$.
Then $\per(\xi)\,|\, \ind(\xi)$.
In other words, if $\xi$ can be split by a finite separable extension of degree $n$, then $\xi^{\di n}=1$.
\end{theorem}

\begin{proof}
Assume that $\xi$ can be  split by a finite separable extension $L/K$ of degree $n$.

First consider the case $K=\R$.
If $n=2$, then automatically we have $\xi^{\di 2} =1$.
If $n=1$, then $\xi^{\di 1}=\xi=1$.

In the other cases,
since $\xi$ is split  by the extension $L/K$ of degree $n$,
we see that $\ab\,\xi$ is split by this extension, that is,
\begin{equation}\label{e:Res}
\Res_{L/K} (\ab\, \xi)=1.
\end{equation}
From the well-known formula
\[ \Cor_{L/K}\circ\Res_{L/K}=(x\mapsto x^n),\]
see~\cite[Chapter IV, Section 6, Proposition 8]{CF},
which is also valid for hypercohomology, see \cite[Remark 4.5.3]{BK},
we deduce from \eqref{e:Res} that
\begin{equation}\label{e:ab-xi-n}
(\ab\, \xi)^n=1.
\end{equation}

If $K$ is a non-archimedean local field or a  global  function field, then we deduce from \eqref{e:ab-xi-n}
that $\xi^{\di n}=1$.

In the case where $K$ is a number field, it remains to
show that
\begin{equation}\label{e:locinf-xi-n}
(\locinf\xi)^{\di n} =1.
\end{equation}

If $n$ is even, then \eqref{e:locinf-xi-n} holds automatically.
If $n$ is odd, then for each real place $v$ of $K$,
there exist at least  one {\em real}  place $w$ of $L$ over $v$.
Then $L_w=K_v$.
Since $\xi$ splits over $L$, it splits over $L_w$ and hence over $K_v$. Thus $\loc_v\hs\xi=1$.
We see that the assumption that $\xi$ is split by the extension $L/K$ of odd degree
implies that $\locinf \xi=1$, whence \eqref{e:locinf-xi-n} holds.

The equalities \eqref{e:ab-xi-n} and \eqref{e:locinf-xi-n} imply $\xi^{\di n}=1$.
This completes the proof of the theorem.
\end{proof}

\begin{corollary}[\hs{Sansuc~\cite[Corollary 4.8]{Sansuc}}\hs]
\label{c:Sansuc}
Let $G$ be a reductive group over a local or  global  field $K$,
and let $\{K_i\}$ be finite separable extensions of $K$
such that the greatest common divisor of their degrees $[K_i:K]$ is $1$.
Then the canonical map
\begin{equation}\label{e:Sansuc}
f\colon\Hon(K,G)\to \prod_i \Hon(K_i,G)
\end{equation}
is injective.
In particular, if a $G$-torsor $X$ over $K$ has a point in each $K_i$,
then it has a point in $K$.
\end{corollary}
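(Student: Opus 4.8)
The plan is to deduce injectivity of the map $f$ in \eqref{e:Sansuc} from Theorem \ref{t:period-divides-index} together with the twisting technique of \cite{Serre}. First I would recall the standard reduction: to show that $f$ has trivial fibers pointwise, it is not enough to check that $f^{-1}(1) = \{1\}$, because $\Hon(K,G)$ is only a pointed set. Instead, fix a class $\xi \in \Hon(K,G)$ with $f(\xi) = f(\xi')$ for some $\xi' \in \Hon(K,G)$; choose a cocycle representative of $\xi$ and form the twisted group ${}_\xi G$, also reductive over $K$. By the usual twisting bijection $\tau_\xi \colon \Hon(K, {}_\xi G) \isoto \Hon(K,G)$ carrying the neutral class to $\xi$, and its compatibility with restriction to each $K_i$, the problem of showing $\xi = \xi'$ is transformed into showing that a certain class $\eta \in \Hon(K, {}_\xi G)$ with trivial image in every $\Hon(K_i, {}_\xi G)$ must itself be trivial. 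Thus it suffices to prove the statement ``$f^{-1}(1) = \{1\}$'' for \emph{all} reductive $K$-groups simultaneously, i.e. to show: if $\eta \in \Hon(K,H)$ is split by each $K_i$, then $\eta = 1$.

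Now I would invoke Theorem \ref{t:period-divides-index}. Since $\eta$ is split by the extension $K_i/K$ of degree $n_i := [K_i : K]$, that theorem gives $\eta^{\di n_i} = 1$ for every $i$. Hence $\per(\eta)$ divides each $n_i$, so $\per(\eta)$ divides $\gcd_i(n_i) = 1$, which forces $\per(\eta) = 1$. By Corollary \ref{c:xi^per}, $\per(\eta)$ is the least integer $m \ge 1$ with $\eta^{\di m} = 1$, so $\per(\eta) = 1$ means $\eta = \eta^{\di 1} = 1$, as desired. Unwinding the twisting reduction then yields $\xi = \xi'$, so $f$ is injective.

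The last sentence of the corollary is immediate: if a $G$-torsor $\mathcal T$ over $K$ represents a class $\xi \in \Hon(K,G)$ and $\mathcal T$ has a $K_i$-point for each $i$, then $\Res_{K_i/K}(\xi) = 1$ for all $i$, so $f(\xi) = 1 = f(1)$, and injectivity gives $\xi = 1$, i.e. $\mathcal T$ is trivial and has a $K$-point.

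The main obstacle is the twisting bookkeeping rather than any hard input: one must verify carefully that the twisting bijections $\tau_\xi$ are compatible with the restriction maps to the $K_i$ (so that $f(\xi) = f(\xi')$ really does translate into a class in the kernel of the analogous map for ${}_\xi G$), and that ${}_\xi G$ is again reductive so that Theorem \ref{t:period-divides-index} applies to it. All of this is standard (see \cite[Section I.5]{Serre}), so once it is in place the arithmetic content is entirely carried by Theorem \ref{t:period-divides-index} and Corollary \ref{c:xi^per}.
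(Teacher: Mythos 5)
Your proposal is correct and follows essentially the same route as the paper: a twisting reduction to the triviality of $\ker f$, followed by Theorem \ref{t:period-divides-index} to force $\per(\eta)=1$ (the paper phrases this via $\ind(\xi)=1$ and $\per\mid\ind$, which is the same computation), and then Proposition \ref{p:xi^per} (equivalently your Corollary \ref{c:xi^per}) to conclude $\eta=\eta^{\di 1}=1$. The extra care you take with the twisting bookkeeping is exactly what the paper compresses into the phrase ``by twisting,'' so nothing is missing.
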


\begin{proof}
By twisting, it suffices to show that the kernel of the map $f$ of \eqref{e:Sansuc} is trivial.
If $\xi\in\ker f$, then $\ind(\xi)=1$; see Definition~\ref{def:index}.
By Theorem~\ref{t:divides}
we have $\per(\xi)=1$, whence by Proposition \ref{p:xi^per} we obtain that
$\xi^{\di 1}=\xi^{\di\per(\xi)}=1$,  that is, $\xi=1$, as desired.
\end{proof}

\begin{remark} \label{p:same-R}
Let $\xi\in \Hon(\R,G)$, where $G$ is a reductive group over $\R$.
Then $\per(\xi)=\ind(\xi)$.

Indeed, $\per(\xi) = 1$ if $\xi = 1$ and $\per(\xi) = 2$ otherwise; see Section~\ref{s:real}.
If $\xi = 1$, then $\mathbb R$ splits $\xi$ and hence, $\ind(\xi) = 1 = \per(\xi)$.
If $\xi \neq 1$, then $\xi$ is not split by $\R$ but is split by $\C$, so $\ind(\xi) = 2 = \per(\xi)$.
\qed
\end{remark}

\begin{remark} \label{rem.totaro} Let $G$ be a reductive group over a field $K$ and $\xi \in \Hon(K, G)$.
Recall from Definition~\ref{def:index} that $\ind(\xi)$ is defined as the greatest common divisor of the degrees $[L:K]$
of finite separable field extensions $L/K$ splitting $\xi$ (or of all finite field extensions splitting $\xi$).
It is natural to ask if every $\xi$ can be split by a single separable field extension of degree $\ind(\xi)$.
Equivalently, if a $G$-torsor ${X} \to \Spec(K)$ has a zero cycle of degree $d$,
does it necessarily have a point of degree $d$?
In the case where $d = 1$, this question was posed by
Serre~\cite[p.~233]{Serre-PP}. When
$K$ is a local or  global  field, Corollary~\ref{c:Sansuc}
tells us that the answer is "Yes", whereas for a general field $K$, Serre's question is open.
For an arbitrary $d \geqslant 1$ this question is due to Totaro~\cite[Question 0.2]{totaro}. It
was recently answered in the negative by Gordon-Sarney and Suresh~\cite{suresh, suresh-erratum}.
In particular, Gordon-Sarney and Suresh constructed counter-examples
where $K = \Q_2$~\cite[Example 3.6]{suresh}
and where $K = \Q$~\cite[Example 3.7]{suresh}.
\end{remark}

\section{The period 2 property for semisimple groups}
\label{sect.examples}

\begin{definition}
Let $G$ be a semisimple
group defined over a local or  global   field $K$. We say that
$G$ has {\em the period $2$ property} if $\per(\xi) = 2$ for every $\xi\neq 1$ in  $\Hon(K, G)$.
Equivalently,
\begin{equation}\label{e:2-per}
\xi^{\di 2}=1\quad\ \text{for all}\ \, \xi\in \Hon(K,G)
\end{equation}
or
\begin{equation*}
\xi^{\di  n}=
 \begin{cases}
 \xi &\text{when $n$ is odd,}\\
 1 &\text{when $n$ is even.}
 \end{cases}
\end{equation*}
\end{definition}

\begin{proposition} \label{prop.sc}
Let $G$ be a semisimple group defined over a local or global  field $K$.
Let $M=\pi_1^\alg(G)$ be the algebraic fundamental group of $G$
and let $\mu\coloneqq\ker[G^\ssc\to G]$.
(Note that $\mu$ may be non-smooth.)

\smallskip

\begin{enumerate}
\item[\rm (a)] $2 \cdot M = 0$ if and only if $2\cdot\X^*(\mu)=0$.
Here $\X^*(\mu)$ denotes the group of characters of $\mu$ defined over $K^s$.
\smallskip

\item[\rm (b)] Assume that the equivalent conditions of part (a)
are satisfied.  Then $G$ has the period $2$ property.
\end{enumerate}
\end{proposition}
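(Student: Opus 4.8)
\textbf{Proof plan for Proposition \ref{prop.sc}.}

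The plan is to prove part (a) by a purely lattice-theoretic duality argument, and then to deduce part (b) from the explicit description of the power operation established in Theorem \ref{t:explicit} (or, over a non-archimedean local field, from Proposition \ref{p:explicit}).

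First I would handle part (a). By Proposition \ref{prop.ss} there is a canonical perfect pairing of finite abelian groups $\X^*(\mu)\times M\to\Q/\Z$. For any perfect pairing between finite abelian groups $P\times M\to\Q/\Z$, multiplication by $2$ on one factor is adjoint to multiplication by $2$ on the other; hence $2\cdot M=0$ if and only if the map $\bdot 2\colon M\to M$ is zero, if and only if the adjoint map $\bdot 2\colon\X^*(\mu)\to\X^*(\mu)$ is zero, if and only if $2\cdot\X^*(\mu)=0$. This is elementary and should take only a couple of lines; the one point to be careful about is that the pairing is genuinely perfect (nondegenerate on both sides), which is exactly what Proposition \ref{prop.ss} and Lemma \ref{l:duality} give us.

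Next, part (b). Since $2\cdot M=0$, multiplication by $2$ is the zero endomorphism of $M$, and therefore also of the derived groups $M_\Gt$ (for $K$ non-archimedean local) and $\big(M[\V_L]_0\big)_\Gt$ (for $K$ global), because these are subquotients of sums of copies of $M$ on which $\bdot 2$ acts compatibly. Over a non-archimedean local field, Proposition \ref{p:explicit} identifies $\di 2$ on $\Hon(K,G)\cong M_\Gt$ with $\bdot 2$, which is zero, so $\xi^{\di 2}=1$ for all $\xi$. Over a global field, Theorem \ref{t:explicit} identifies $\di 2$ with the self-map $(\bdot 2,\dit 2)$ of $\cH^1(G/K)$; the first component $\bdot 2$ on $\big(M[\V_L]_0\big)_\Gt$ is zero as just noted, and the second component $\dit 2$ on $\Hon(K_\infty,G)$ is the trivial map by the very definition of $\dit 2$ in Subsection \ref{ss:def} (the square of every element in a group of order $\le 2$ is trivial). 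Since the bijection $\phi_\cH$ is base-point preserving, it follows that $\xi^{\di 2}=1$ for all $\xi\in\Hon(K,G)$, i.e. $G$ has the period $2$ property. For $K=\R$ (or $\C$) the claim is immediate from the definition of $\di$ in Section \ref{s:real}.

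The only mild subtlety is bookkeeping: one must check that $\bdot 2=0$ on $M$ really does force $\bdot 2=0$ on the two derived functors $M_\Gt$ and $\big(M[\V_L]_0\big)_\Gt$. This is straightforward since multiplication by an integer is functorial and $M[\V_L]_0\subseteq M[\V_L]\cong\bigoplus_{w}M$, so $\bdot 2$ kills $M[\V_L]_0$, hence kills its group of coinvariants and the torsion subgroup thereof; similarly for $M_\Gt$. I do not anticipate a genuine obstacle here — the real content of the proposition is entirely packaged in Proposition \ref{prop.ss}, Proposition \ref{p:explicit}, and Theorem \ref{t:explicit}, which we are free to invoke.
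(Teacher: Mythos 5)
Your proposal is correct and follows essentially the same route as the paper: part (a) via the perfect pairing of Proposition \ref{prop.ss}, and part (b) by observing that $2\cdot M=0$ kills $\bdot 2$ on $M_\Gt$ (resp.\ on $\big(M[\V_L]_0\big)_\Gt$) and invoking Proposition \ref{p:explicit} in the non-archimedean local case and Theorem \ref{t:explicit} in the global case, with the real case immediate from the definition of $\di$. Your extra bookkeeping remark about why $\bdot 2=0$ descends to the coinvariant functors is a detail the paper leaves implicit, but it is not a different argument.
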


\begin{proof}
Part (a) follows from Proposition \ref{prop.ss}.
To prove part (b), assume that $2\cdot M=0$.
If $K=\R$, assertion \eqref{e:2-per} is obvious.
If $K$ is a non-archimedean local field,
then assertion \eqref{e:2-per} follows from $2\cdot M=0$
by Proposition \ref{p:explicit}.

If $K$ is a global field, consider the maps \eqref {e:H1-BK}  in
Section \ref{s:explicit}.
Since $2\cdot M=0$, we see that the map $\bdot 2$ on $\big(M[\V_L]_0\big)_\Gt$ is identically 0,
and clearly the map $\dit 2$ on $\Hon(K_\infty,G)$ is identically 1.
Thus the induced map  $(\bdot  2,\dit 2)$ on $\cH^1(G/K)$ is identically 1,
and by Proposition \ref{t:explicit} the map $\di 2$ on $\Hon(K,G)$
is identically 1. We conclude that $G$ has the period 2 property.
\end{proof}

\begin{example} \label{ex.sc}
The hypothesis $2\cdot \X^*(\mu)=0$ of Proposition \ref{prop.sc} is satisfied,
in particular, in the following cases:

\begin{itemize}
\item all simply connected semisimple $K$-groups,

\item
  $\SL(1,A)/\mu_2$\hs, where $A$ is a central simple algebra of even degree over $K$,

\item
 $\operatorname{SO}(q)$, where $q$ is a non-degenerate quadratic form of dimension $\geqslant 3$ over $K$,

\item
 adjoint simple $K$-groups of type $\CCC_n$,

\item
 adjoint simple $K$-groups of type $\EEE_7$,

\item
 adjoint simple $K$-groups of type $\DDD_{2m}$ for $m\ge 2$,

\item
half-spin group $\operatorname{HSpin}(q)$ where $q$ is a non-degenerate quadratic form
whose dimension is divisible by $4$ over $K$.
\end{itemize}

\smallskip \noindent
Therefore, all of these groups have the period $2$ property.
\end{example}

\begin{lemma}\label{l: perod2-local}
Let $K$ be a non-archimedean local field and $G$ be an {\emm inner form} of a split semisimple $K$-group.
Then $G$ has the period 2 property if and only if $2\cdot M=0$ where $M=\pi_1^\alg(G)$.
\end{lemma}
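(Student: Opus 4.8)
The plan is to deduce the lemma from the explicit description of $\Hon(K,G)$ over a non-archimedean local field recorded in Proposition~\ref{p:explicit}, together with the observation that an inner form of a split group has trivial Galois action on its algebraic fundamental group. The ``if'' direction requires no inner-form hypothesis at all: it is exactly Proposition~\ref{prop.sc}(b). So the content of the lemma is the converse, namely that the period $2$ property forces $2\cdot M=0$, and this is where the assumption that $G$ is an inner form of a split group enters.

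For the converse, the first step is to show that the $\Gal(K^s/K)$-module $M=\pi_1^\alg(G)$ is \emph{trivial}, i.e.\ the Galois action on $M$ is trivial. Indeed, if $G$ is an inner form of a split semisimple $K$-group $G_0$, then over $K^s$ we may identify $G$ with $G_0$, and under this identification the Galois action on $G$ differs from the (split) Galois action on $G_0$ by inner automorphisms. Since inner automorphisms induce the identity on $\pi_1^\alg$ (equivalently, the action on $\pi_1^\alg=\X_*(T_0)/\rho_*\X_*(T_0^\ssc)$ factors through the action on the based root datum, which ignores inner automorphisms), the Galois module $\pi_1^\alg(G)$ is isomorphic to $\pi_1^\alg(G_0)$, on which $\Gal(K^s/K)$ acts trivially; see \cite[Section~1]{Borovoi-Memoir}. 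Choosing a finite Galois splitting extension $L/K$ and writing $\G=\G(L/K)$, we conclude that $\G$ acts trivially on $M$. Since $G$ is semisimple, $M$ is finite (by Proposition~\ref{prop.ss}, $M\cong\Hom(\X^*(\mu),\Q/\Z)$ with $\mu$ a finite group scheme), so $M_\Gt=M_\G=M$.

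Now I would invoke Proposition~\ref{p:explicit}: it provides a bijection $\Hon(K,G)\isoto M_\Gt=M$ carrying the power map $\di 2$ to multiplication by $2$ on $M$. Hence $\xi^{\di 2}=1$ for all $\xi\in\Hon(K,G)$ if and only if $2m=0$ for all $m\in M$, that is, if and only if $2\cdot M=0$. This establishes both implications at once, and in particular the converse we need. The only nontrivial ingredient is the triviality of the Galois action on $\pi_1^\alg(G)$ for an inner form of a split group; everything else is a direct reading of Propositions~\ref{prop.ss}, \ref{prop.sc} and~\ref{p:explicit}. I expect this step to be the main (indeed only) point requiring care: if one prefers not to cite the inner-class invariance of $\pi_1^\alg$, one can instead verify it by an explicit computation with the based root datum, using that the twisted cocycle takes values in inner automorphisms and that these act trivially on $\X_*(T_0)/\rho_*\X_*(T_0^\ssc)$.
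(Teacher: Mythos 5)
Your proposal is correct and follows essentially the same route as the paper: the ``if'' direction via Proposition~\ref{prop.sc}, and the ``only if'' direction by noting that the inner-form hypothesis makes the Galois action on $M$ trivial, so that $M_\Gt\simeq M$ and Proposition~\ref{p:explicit} (via the bijection \eqref{e.bk6.5.2}) identifies $\di 2$ with multiplication by $2$ on $M$. Your added justification of the triviality of the Galois action on $\pi_1^\alg(G)$ is a detail the paper asserts without elaboration, but it does not change the argument.
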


Recall that an {\em inner form} of a reductive $K$-group $G$
is a reductive group of the form $_cG$
where $c\in Z^1(K, G/Z_G)$ and $Z_G$ denotes the center of $G$.

\begin{proof}
The ``if" assertion follows from Proposition  \ref{prop.sc}.
To prove the ``only if" assertion,
assume that $2\cdot M\neq 0$. In other words, there exists $x\in M$ with $2x\neq 0$.
Now recall that $M$ is finite.
Moreover, since $G$ is an inner form of a split group,
the Galois group $\G=\Gal(K^s/K)$ acts trivially on $M$. Hence,
$M_\Gt \simeq M$, and we have a canonical bijection
\[ \Hon(K, G) \isoto M_\Gt \simeq M\]
by formula \eqref{e.bk6.5.2} in Section \ref{s:explicit}.
Let $\xi\in\Hon(K,G)$ be the element with image $x$ in $M$.
Since $2x\neq 0$, Proposition~\ref{p:explicit} tells us that $\xi^{\di 2}\neq 1$, as desired.
\end{proof}

\begin{lemma}\label{l:surjectivity}
Let $G$ be a {\emm semisimple} group over a global field $K$,
and let $v$ be a place of $K$.
Then the localization map at $v$
\begin{equation}\label{e:loc-BH}
\loc_v\colon \Hon(K,G)\to\Hon(K_v,G)
\end{equation}
is surjective.
\end{lemma}

\begin{proof}
When $K$ is a number field, this follows immediately from
\cite[Theorem 1.7]{BH}; see also \cite[Proposition 3.14]{Borovoi-CR23}.
The proof in \cite{Borovoi-CR23} uses \cite[Proposition 2.6]{Kottwitz-86}.

When $K$ is a global function field,
the same proof as in \cite{Borovoi-CR23},
but with \cite[Theorem 4.2]{Thang} instead of \cite[Proposition 2.6]{Kottwitz-86},
gives a proof of the surjectivity of \eqref{e:loc-BH} in this case.
\end{proof}

We are now ready to prove Theorem \ref{t:2-property-intro},
which we restate here for the reader's convenience.

\begin{theorem}[{Theorem \ref{t:2-property-intro}}]
\label{t:2-property}
Let $K$ be a  {\emm global}  field, and let $G$ be a semisimple $K$-group.
Then $G$ has the period 2 property if and only if $2\cdot \X^*(\mu)=0$
where $\mu=\ker [G^\ssc\to G]$.
\end{theorem}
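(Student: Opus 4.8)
The plan is to reduce the global statement to the tools already assembled: the explicit description of $\Hon(K,G)$ via $\cH^1(G/K)$ (Theorem~\ref{t:BK}), the explicit form of the power map (Theorem~\ref{t:explicit}), and the equivalence $2\cdot M = 0 \Leftrightarrow 2\cdot\X^*(\mu)=0$ from Proposition~\ref{prop.sc}(a). By that equivalence it suffices to prove: $G$ has the period~$2$ property if and only if $2\cdot M = 0$, where $M = \pi_1^\alg(G)$. The ``if'' direction is exactly Proposition~\ref{prop.sc}(b), so only the ``only if'' direction requires work.

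For the ``only if'' direction I would argue by contraposition: assume $2\cdot M \neq 0$ and produce $\xi \in \Hon(K,G)$ with $\xi^{\di 2} \neq 1$. Pick $x \in M$ with $2x \neq 0$. I want to realize $x$ as the image of some cohomology class under the map $\phi_M\colon \Hon(K,G)\to (M[\V_L]_0)_\Gt$, in such a way that multiplication by $2$ (the left-hand component of the power map in Theorem~\ref{t:explicit}) does not kill it. The obstacle is that $\phi_M$ is built as $\psi_M\circ\ab$, and $\psi_M$ is only injective, not surjective, so not every element of $(M[\V_L]_0)_\Gt$ arises from $\Hon(K,G)$; moreover elements of $(M[\V_L]_0)_\Gt$ involve formal combinations over places $w$ of $L$, and a single $x\in M$ must be spread over at least two places to land in the degree-zero subgroup. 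The natural fix is to choose two non-archimedean places $v_1, v_2$ of $K$ that split completely in $L$ (so the decomposition groups $\G_{\bv_i}$ are trivial, $M_{\G_{\bv_i},\tors} = M_\tors = M$ since $M$ is finite), lift $x$ locally at $v_1$ and $-x$ locally at $v_2$ using the local description \eqref{e.bk6.5.2}, and invoke the surjectivity/compatibility of the localization maps together with the Cartesian/fiber-product description of $\cH^1(G/K)$ to patch these into a global class $\xi$ whose $\phi_M$-image is $[x\cdot\bv_1 - x\cdot\bv_2]$. Since $2x\neq 0$ and the $v_i$ are chosen so that the coinvariant/torsion quotient still sees $2x\neq 0$, we get $\bdot 2\circ\phi_M(\xi) = [2x\cdot\bv_1 - 2x\cdot\bv_2]\neq 0$, hence $\phi_M(\xi^{\di 2})\neq\phi_M(1)$, hence $\xi^{\di 2}\neq 1$.

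The step I expect to be the main obstacle is making the patching rigorous: namely, verifying that the element $[x\cdot\bv_1 - x\cdot\bv_2] \in (M[\V_L]_0)_\Gt$, together with the trivial class in $\Hon(K_\infty,G)$, actually lies in the fiber product $\cH^1(G/K)$ (i.e.\ that $l_\infty$ of this element is trivial, which holds because $v_1,v_2$ are non-archimedean), and that it lies in the image of $\phi_\cH$ — equivalently, that the local class we built at $v_1$ really is the localization of a global class realizing this combination. For a function field $K$ there are no archimedean places, so $\cH^1(G/K) = (M[\V_L]_0)_\Gt$ and the argument simplifies; for a number field one uses that $\loc_\infty$ is surjective (and compatible with $\ab$) to adjust the archimedean component to the trivial class. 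An alternative, possibly cleaner route avoiding the explicit $\cH^1$ machinery: choose a single non-archimedean place $v$ of $K$ split in $L$, use Proposition~\ref{p:ab-n}(ii) ($\loc_v(\xi^{\di 2}) = (\loc_v\xi)^{\di 2}$) together with the local statement of Lemma~\ref{l: perod2-local} (which applies at such $v$, where $G$ becomes an inner form of a split group and $2\cdot M\neq 0$), to find a local class $\xi_v\in\Hon(K_v,G)$ with $\xi_v^{\di 2}\neq 1$, then lift $\xi_v$ to a global class $\xi$ with $\loc_v(\xi)=\xi_v$ — here one needs a surjectivity statement for the single-place localization $\Hon(K,G)\to\Hon(K_v,G)$ at a non-archimedean $v$, which follows from the Hasse principle machinery (the Cartesian square plus surjectivity of $\psi_M$ onto the relevant local pieces), after which $\loc_v(\xi^{\di 2}) = \xi_v^{\di 2}\neq 1$ forces $\xi^{\di 2}\neq 1$. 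I would present the proof via this second route if the required single-place surjectivity is available in the cited form, and otherwise fall back on the two-place fiber-product construction above.
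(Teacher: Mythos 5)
Your second (preferred) route is essentially the paper's proof: the paper uses the Chebotarev Density Theorem to find a finite place $v$ at which $G_{K_v}$ is an inner form of a split group (slightly stronger than your ``$v$ split in $L$'', and exactly the hypothesis of Lemma~\ref{l: perod2-local}), obtains $\xi_v$ with $\xi_v^{\di 2}\neq 1$ from that lemma, and lifts $\xi_v$ to a global class using the surjectivity of $\loc_v$ at a single non-archimedean place, which it justifies via Kottwitz \cite{Kottwitz-86} and Th\v{a}\'{n}g \cite{Thang} together with Chebotarev. Since that single-place surjectivity is indeed available, your argument closes exactly as the paper's does, and the two-place fiber-product fallback is not needed.
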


\begin{proof}
The ``if" assertion follows from Proposition  \ref{prop.sc}. We prove``only if''.

Assume that $2\cdot M\neq 0$.
Let $\Theta=\Theta_K(M)$
denote the image of $\G(K^s/K)$ in $\Aut M$.
For a place $v$ of $K$, let $\Theta_v\subseteq\Theta$
denote the corresponding decomposition group (defined up to conjugacy).
By the Chebotarev density theorem there exists
a finite place $v$ of $K$ for which $\Theta_v=\{1\}$, and hence
$G_v\coloneqq G\times_K  K_v$ is an inner form of a split $K_v$-group.
We have $\pi_1^\alg(G_v) = \pi_1^\alg(G) = M$.
Since $2\cdot M\neq 0$,
by Lemma \ref{l: perod2-local} the group $G_v$ does not have the period 2 property,
that is, there exists $\xi_v\in \Hon(K_v,G_v)$ with $\xi_v^{\di 2}\neq 1$.
By Lemma \ref{l:surjectivity},  there exists $\xi\in \Hon(K,G)$ with $\loc_v(\xi)=\xi_v$\hs.
Then $\xi^{\di 2}\neq 1$, as desired.
\end{proof}

\begin{remark}\label{r:2-property}
Theorem \ref{t:2-property} fails over {\em non-archimedean local} fields!
Indeed, let $K$ be such a field, and let $L/K$ be a separable quadratic extension.
Take $G={\rm PU}_{3,L/K}$, the projective unitary group in 3 variables corresponding to the quadratic extension $L/K$.
For $M=\pi_1^\alg(G)$ we have $M\simeq \Z/3\Z$ with a {\em non-trivial} action of the Galois group $\G=\Gal(K^s/K)$
(because our $G$ is an {\emm outer} form of a split group).
It follows that $M_\G=0$ and $M_\Gt=0$.
By \cite[Theorem 5.5.2(1)]{BK} we have $\Hon(K,G)\cong M_\Gt=0$,
and therefore $G$ has the period 2 property.
On the other hand, $2\cdot M\neq 0$, and therefore $2\cdot\X^*(\mu)\neq 0$.
\end{remark}

\section{The transfer map}
\label{s:transfer}

Let $\G$ be a group\footnote{\,Later in this paper, $\G$ will usually denote a profinite group, but it is allowed to be an arbitrary abstract group in this section.}
and let $A$ be a $\G$-module.
Let $\Delta\subset \G$ be a subgroup of finite index.
Following Brown \cite[Section III.9(B)]{Brown}, we define the {\em transfer homomorphism}
\begin{equation}\label{e:NDG}
\Tsf_{\G/\Delta}\colon A_\Gt\to A_{\Delta,\Tors}
\end{equation}
as follows. (See also \cite[beginning of Section 2.7]{BK},
where our $\Tsf_{\G/\Delta}$ is denoted by $N_{\Delta\lmod \G}$\hs.)
Consider the set of right cosets $\Delta\bs \G$.
Choose a section $s\colon \Delta\bs \G\to \G$
of the natural projection $\G\to \Delta\bs \G$.
The homomorphism
\begin{equation*}
A\to A,\quad\ a\mapsto\!\! \sum_{x\in\Delta\lmod\G}\!\!{s(x)}\cdot a
\end{equation*}
descends to a homomorphism $A_\G\to A_\Delta$\hs,
which does not depend on the choice of the section $s$;
see Rotman \cite[Lemma 9.91(i)]{RotmanHA}.
Thus it induces a well-defined homomorphism \eqref{e:NDG}.

\begin{lemma}\label{l:mult-index}
Let $A$ be a $\G$-module, and let $\Delta\subset \G$ be a subgroup of finite index.
Suppose that the images of the natural homomorphisms
\begin{equation} \label{e:im-GD}
\G\to\Aut A\quad\ \text{and}\quad\ \Delta\to\Aut A
\end{equation}
coincide.
We identify
$A_{\Delta,\Tors}\isoto  A_\Gt$.
Then the transfer homomorphism
\[\Tsf_{\G/\Delta}\colon A_\Gt\to A_{\Delta,\Tors}=  A_\Gt\]
is given by multiplication by $ [\G:\Delta]$. That is,
$\Tsf_{\G/\Delta}(\alpha) = [\G:\Delta]\cdot \alpha$
for any $\alpha\in A_\Gt$.
\end{lemma}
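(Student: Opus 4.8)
The key point is that when the images of $\G \to \Aut A$ and $\Delta \to \Aut A$ coincide, the transfer homomorphism can be computed by choosing a convenient set of coset representatives. Let $N$ denote the common image, so $A$ is really a module over the finite (or at least acting-through-a-quotient) group $N$, and both $A_\G = A_N$ and $A_\Delta = A_N$ as quotients of $A$; this is the identification $A_{\Delta,\Tors} \isoto A_\Gt$ used in the statement. I would first observe that since $\Delta$ surjects onto $N$ already, every coset in $\Delta\bs\G$ contains an element of $\ker(\Delta \to \Aut A) \supseteq \ker(\G \to \Aut A)$ acting trivially on $A$. Concretely, I would pick a section $s\colon \Delta\bs\G \to \G$ such that $s(x)$ acts trivially on $A$ for every $x \in \Delta\bs\G$. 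Such a section exists: the projection $\G \to \Delta\bs\G$ factors through $\G/\ker(\Delta\to\Aut A)$, but more directly, given any coset $\Delta g$, write the image of $g$ in $N$ as $\bar\delta$ for some $\delta \in \Delta$ (possible since $\Delta \onto N$); then $\delta^{-1}g$ acts trivially on $A$ and lies in $\Delta g$, so set $s(\Delta g) = \delta^{-1} g$.

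**Carrying it out.** With this choice of section, the defining formula for the transfer,
\[
a \mapsto \sum_{x \in \Delta\bs\G} s(x)\cdot a,
\]
becomes simply $a \mapsto \sum_{x \in \Delta\bs\G} a = [\G:\Delta]\cdot a$, since each $s(x)$ acts as the identity on $A$. This descends to the map $A_\G \to A_\Delta$ which, under the identification of both with $A_N$, is multiplication by $[\G:\Delta]$; restricting to torsion subgroups gives $\Tsf_{\G/\Delta}(\alpha) = [\G:\Delta]\cdot\alpha$ for all $\alpha \in A_\Gt$. I would invoke the cited fact (Rotman, Lemma 9.91) that the descended homomorphism $A_\G \to A_\Delta$ is independent of the section, so it is legitimate to compute it with this particular well-adapted choice.

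**The main obstacle.** There is essentially one thing to get right: verifying that a section $s$ with $s(x)$ acting trivially on $A$ actually exists, i.e.\ that the hypothesis ``the images of $\G\to\Aut A$ and $\Delta\to\Aut A$ coincide'' is exactly what is needed. This is where I would be careful — the point is that surjectivity of $\Delta \to N$ lets us correct any representative $g$ of a coset by an element of $\Delta$ so that the product acts trivially, and that the corrected element still represents the same coset $\Delta g$. One should also note that $A_\Gt$ is precisely $A_{\Delta,\Tors}$ under the natural surjection $A_\G \onto$ (nothing, they're equal as quotients of $A$, since $\langle \gamma a - a\rangle = \langle \delta a - a\rangle$ as the two generating sets coincide by the hypothesis on images) — so the identification in the statement is the obvious one and no subtlety lurks there. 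The rest is a one-line computation, so I expect the whole proof to be short.
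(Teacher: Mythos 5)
Your proposal is correct and is essentially the paper's argument: both proofs exploit the hypothesis that the images of $\G$ and $\Delta$ in $\Aut A$ coincide to replace the action of each coset representative by that of an element of $\Delta$, which is trivial modulo the $\Delta$-coinvariant relation. The only cosmetic difference is that you build this correction into the choice of section (so each $s(x)$ acts trivially and you then invoke independence of the section), whereas the paper keeps an arbitrary section and shows term by term that $[s(x)\cdot a]=[a]$ in $A_\Delta$.
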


\begin{proof}
Let $\alpha=[a]\in A_\Gt$ where $a\in A$. Let $x\in\Delta\!\lmod\!\G$, $g=s(x)\in\G$.
Since the images of the homomorphisms \eqref{e:im-GD} coincide, we have
\[g \cdot a=\,h\cdot a\quad\ \text{for some}\ \, h\in\Delta.\]
Thus
\[ g\cdot a=(\, g\cdot a-a)+a=(\, h\cdot a-a)+a\ \overset{\Delta}{\thicksim}\ a,\]
where $\overset{\Delta}{\thicksim}$ denotes the equivalence relation in $A$
corresponding to the projection $A\to A_\Delta$\hs.
We conclude that
\[\Tsf_{\G/\Delta}\hs[a]= \sum_{x\in\Delta\lmod\G}[s(x)\cdot a]=  \sum_{x\in\Delta\lmod\G} [a]=[\G:\Delta]\cdot [a],\]
as desired.
\end{proof}

\section{The index divides a power of the period: the local case}
\label{sect.local}

In this section we consider the index of a cohomology class $\xi\in \Hon(K,G)$
of period $n$ in the case when $K$ is a  local field.
We shall show that $\xi$ can be split by a cyclic extension
of degree dividing $n^r$ for some positive integer $r$.
From this and Theorem \ref{t:divides}, it follows that
the period and the index of $\xi$ have the same prime factors.
Our main result is Theorem \ref{t:n[a]-l}.

Let $G$ be a reductive group over a field $K$.
Write $M=\pi_1^\alg(G)$, and let $\Theta_K=\Theta_K(M)$
denote the image of $\G(K^s/K)$ in $\Aut M$.
Let $F\subset K^s$ be the subfield corresponding
to $\ker\big[\G(K^s/K)\to\Aut M\big]$; then $\G(F/K)\cong\Theta_K$.
Let $\vt =\#\Theta_K$ denote the order of the finite group $\Theta_K$.

Let $K_F^\ab\subseteq F$ denote the maximal abelian extension of $K$ in $F$.
It corresponds to the commutator subgroup
\[ [\Th_K,\Th_K]\subseteq\Th_K\cong\Gal(F/K).\]
Then $K_F^\ab/K$ is a Galois extension with Galois group
$\Th_K^\ab\coloneqq \Th_K/[\Th_K,\Th_K]$.
We set
\[\vt^\ab=\#\Th_K^\ab=[K_F^\ab:K];\]
then $\vt^\ab\,|\, \vt$.

\begin{definition}
Let $\vt \ge 1$ and $n>1$ be integers.
By the {\em $n$-capacity} $\cp(n,\vt)$ of  $\vt $
we denote the greatest divisor of  $\vt $
that divides $n^l$ for some positive integer $l$.
\end{definition}

Now assume that $K$ is a non-archimedean local field.

\begin{theorem}\label{t:n[a]-l}
Let $K$ be a non-archimedean local field
with absolute Galois group $\G_K=\G(K^s/K)$.
Let $G$ be a reductive $K$-group, and write $M=\pi_1^\alg(G)$.
Let $\xi\in \Hon(K,G)$, and assume that $\xi^{\di n}=1$
for some integer $n>1$.
Then any cyclic extension $L/K$ of degree divisible by
$n\cdot\cp(n,\vt^\ab)$ splits $\xi$.
\end{theorem}

\begin{remark} \label{rem.existence}
A non-archimedean local field $K$ admits an (unramified) cyclic extension $L/K$ of any given integer degree $m \geqslant 1$.  Indeed, the degree $m$ unramified cyclic extensions of $K$ are in bijective correspondence with the degree $m$ cyclic extensions of the residue field $k_K$; see~\cite[Section III.5]{Serre-LF}. Here $k_K$ is a finite field and hence, admits a cyclic extension of every degree.
\end{remark}

Before proceeding with the proof of Theorem~\ref{t:n[a]-l}, we record its immediate consequence.

\begin{corollary}\label{c:n[a]-l}
Let $K$, $G$ and $\xi$ be as in the statement of Theorem~\ref{t:n[a]-l}.
\begin{enumerate}

\item[\rm(a)] If $L/K$ is a cyclic field extension of degree $[L:K]$
divisible by $n\cdot\cp(n,\vt)$,
then $L/K$ splits $\xi$.
\smallskip

\item[\rm(b)]
Let $d =  \lfloor \log_2 \vt \rfloor$. If $L/K$ is a
cyclic extension of degree $n^l$ with $l\ge d+1$,
then $L/K$ splits $\xi$.
\end{enumerate}

\end{corollary}

\begin{proof}
(a) Since $\vt^\ab$ divides $\vt$, we conclude that
$ n\cdot\cp(n,\vt^\ab)$ divides $ n\cdot\cp(n,\vt)$.
Part (a) now follows from Theorem~\ref{t:n[a]-l}.

(b) It suffices to show  that $n^{d+1}$
is divisible by $n \cdot \cp(n, \vt)$, or equivalently,
$n^d$ is divisible by $\cp(n, \vt)$.
To prove this,
let \,$\cp(n,\vt)=\prod_i\hs p_i^{m_i}$ \,be the prime decomposition of $\cp(n,\vt)$.
Then $\prod_i 2^{m_i}\leqslant \cp(n,\vt)$, whence
\[2^{\sum m_i}\leqslant \cp(n,\vt)\quad\ \text{and}
    \quad\ \sum_i m_i\leqslant\log_2 \cp(n,\vt)\leqslant\log_2 \vt.\]

Write $k=\max m_i$.
For any prime factor $p_i$ of $\cp(n,\vt)$, clearly we have $p_i\hs|\hs n$,
whence
\begin{equation} \label{e.cp}
\cp(n,\vt)=\prod_i\hs p_i^{m_i}\ \big|\ \prod_i\hs p_i^{k}\ |\ n^k\ \ \text{with}\quad
      k\coloneqq \max_i m_i\leqslant\sum_i m_i\leqslant\lfloor\log_2 \vt\rfloor=d, \
\end{equation}
as desired.
\end{proof}

\begin{remark}
Theorem \ref{thm.main4} and Corollary \ref{c:main} in the non-archimedean local case
follow from Corollary \ref{c:n[a]-l}.
Indeed, by Corollary~\ref{c:n[a]-l},
a cyclic extension of degree $n^{d+1}$ with
$d=\lfloor\log_2 \vt\rfloor$ splits $\xi$, whence $\ind(\xi)\,|\,n^{d+1}$.
The archimedean (real) case is trivial;
see Remark~\ref{p:same-R}.
\end{remark}

Our proof of Theorem \ref{t:n[a]-l} will rely on the following lemma.

\begin{lemma}\label{l:n[a]}
Let $K$ be a non-archimedean local  field, $\G_K=\Gal(K^s/K)$ be the absolute Galois group of $K$,
and $M$ be a finitely generated $\G_K$-module.
Let $F\subset K^s$ be the finite Galois extension of $K$ corresponding
to the subgroup $\Delta \coloneqq \ker[\G_K \to\Aut M]$ of $\G_K$.
In other words, $(K^s)^\Delta = F$.

Let $n$ be a positive integer,
and $\alpha\in M_{\G_K,\Tors}$ be such that $n\hs\alpha=0$.
Let $L/K$ be a finite cyclic field extension in $K^s$ whose degree $[L:K]$
is divisible by $n\cdot \cp(n,\vt^\ab)$.
Then
\[ \Tsf_{\G_K/\G_L}(\alpha)=0 . \]
Here $\G_L=\Gal(K^s/L)$, and
$\Tsf_{\G_K/\G_L}\colon M_{\G_K,\Tors} \to M_{\G_L,\Tors}$
is the transfer homomorphism, as in Section~\ref{s:transfer}.
\end{lemma}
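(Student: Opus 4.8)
The plan is to reduce the statement to the elementary transfer computation of Lemma~\ref{l:mult-index} by passing through the auxiliary field $E:=L\cap F\subseteq K^s$, and then to verify a purely numerical divisibility using the defining property of the $n$-capacity. First I would observe that in both case (a) and case (b) the extension $L/K$ is cyclic: this is assumed in (a), and in (b) it follows from the fact that unramified extensions of a non-archimedean local field are cyclic. Since $F/K$ is Galois, being the fixed field of the normal subgroup $\ker[\G_K\to\Aut M]$, the intersection $E=L\cap F$ is a Galois subextension of $L/K$, so $\Gal(E/K)$ is a quotient of the cyclic group $\Gal(L/K)$, hence cyclic. Writing $d=[L:K]$ and $e=[E:K]$, we have $e\mid d$; moreover $E/K$ is abelian (being cyclic) and $E\subseteq F$, so $E\subseteq K_F^\ab$ and thus $e\mid\vt^\ab$ in case (a); while $E/K$ is unramified (a subextension of the unramified extension $L/K$) and $E\subseteq F$, so $E\subseteq K_F^\ur$ and thus $e\mid\vt^\ur$ in case (b).

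Next I would compare the images of the Galois groups in $\Aut M$. Since $\G_F$ is normal in $\G_K$, the product $\G_L\G_F$ is a subgroup, and it equals $\G_E$, because $\G_L\G_F$ and $\G_E$ are closed subgroups with common fixed field $L\cap F$. As $\G_F$ acts trivially on $M$, the image of $\G_L$ in $\Aut M$ therefore coincides with the image of $\G_E$. Hence Lemma~\ref{l:mult-index}, applied with $\G=\G_E$, $\Delta=\G_L$, $A=M$, identifies $M_{\G_L,\Tors}$ with $M_{\G_E,\Tors}$ and shows that $\Tsf_{\G_E/\G_L}$ is multiplication by $[\G_E:\G_L]=[L:E]=d/e$. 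By transitivity of the transfer along $\G_L\subseteq\G_E\subseteq\G_K$ (immediate from the defining formula in Section~\ref{s:transfer} upon choosing compatible coset representatives),
\[
\Tsf_{\G_K/\G_L}(\alpha)=\Tsf_{\G_E/\G_L}\!\big(\Tsf_{\G_K/\G_E}(\alpha)\big)=\tfrac{d}{e}\cdot\Tsf_{\G_K/\G_E}(\alpha)
\]
in $M_{\G_L,\Tors}=M_{\G_E,\Tors}$. Since the transfer is a homomorphism and $n\alpha=0$, the class $\Tsf_{\G_K/\G_E}(\alpha)$ is killed by $n$, so it suffices to prove $n\mid d/e$, i.e.\ $ne\mid d$.

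For the numerical step, in case (a) we know $n\cdot\cp(n,\vt^\ab)\mid d$, together with $e\mid d$ and $e\mid\vt^\ab$. I would check $ne\mid d$ prime by prime: for a prime $p\nmid n$ one has $v_p(ne)=v_p(e)\le v_p(d)$; and for $p\mid n$, using that $v_p(\cp(n,\vt^\ab))=v_p(\vt^\ab)$ by the definition of the $n$-capacity, one has $v_p(ne)=v_p(n)+v_p(e)\le v_p(n)+v_p(\vt^\ab)=v_p\!\big(n\cdot\cp(n,\vt^\ab)\big)\le v_p(d)$. Hence $ne\mid d$. Case (b) is identical, with $\vt^\ab$ replaced by $\vt^\ur$. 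Combining with the previous paragraph, $\tfrac{d}{e}\cdot\Tsf_{\G_K/\G_E}(\alpha)=0$, and therefore $\Tsf_{\G_K/\G_L}(\alpha)=0$, as desired.

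I expect the main obstacle to be the bookkeeping in the middle step rather than any genuine difficulty: one has to make sure that the image of $\G_L$ in $\Aut M$ is literally the same subgroup as the image of $\G_E$ (this is where normality of $\G_F$ and the identity $\G_E=\G_L\G_F$ are used), and that the transfer is transitive for a consistent choice of coset sections, so that Lemma~\ref{l:mult-index} applies verbatim to the pair $(\G_E,\G_L)$. With those two points in hand, the $n$-capacity estimate is routine.
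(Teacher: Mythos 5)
Your proof is correct and follows essentially the same route as the paper. Both arguments pass to $E=L\cap F$ (the paper's $K_0$), exploit normality of $\G_F$ to compare images in $\Aut M$, invoke Lemma~\ref{l:mult-index}, and finish by transitivity of the transfer together with the $n$-capacity divisibility. The only difference is in the bookkeeping of the middle step: the paper first reduces to $[L:K]=n\cdot\cp(n,\vt^\ab)$ (resp.\ $\vt^\ur$), shows $n\mid[L:K_0]$, interpolates an auxiliary field $K_1$ with $[K_1:K_0]=n$, and verifies $\im[\G_1\to\Aut M]=\im[\G_0\to\Aut M]$ via the composite $FK_1$; you instead observe $\G_L\G_F=\G_E$ directly, apply Lemma~\ref{l:mult-index} to the pair $(\G_E,\G_L)$ to get multiplication by $[L:E]$, and check $n\mid[L:E]$ prime by prime using $v_p(\cp(n,\vt^\ab))=v_p(\vt^\ab)$ for $p\mid n$. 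Your version avoids the extra field $K_1$ and the initial reduction, at the cost of the short valuation computation, but the core idea and the inputs are identical.
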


\begin{proof} First of all, we may assume without loss of generality that the degree $[L:K]$ of our cyclic extension $L/K$ divides $n^l$ for some $l \geqslant 1$.
Indeed, there is always an intermediate extension $L'/K$ in $L$ of degree $[L':K]=n\cdot\cp(n,\vt^\ab)$,
and $[L':K]$ divides some power of $n$. If we know that
$\Tsf_{\G_K/\G_{L'}}(\alpha)=0$, then we conclude that
we have
\[ \Tsf_{\G_K/\G_L}(\alpha) =  \Tsf_{\G_{L'}/\G_{L}}\big( \Tsf_{\G_K/\G_{L'}}(\alpha)\big)
=\Tsf_{\G_{L'}/\G_{L}} (0) = 0. \]
In other words, if Lemma~\ref{l:n[a]} is valid for $L'/K$, then it is also valid for $L/K$. After replacing $L$ by $L'$,  we may and shall assume that $[L:K]$ divides $n^l$ for some $l \geqslant 1$.

Now set $K_1 := L\cap F$.

\smallskip
{\bf Claim 1:} $[L: K_1]$ is divisible by $n$.

\smallskip
Since $[L:K] = [L:K_1] \cdot [K_1: K]$,
we only need to show that $[K_1:K]$ divides $\cp(n, \vt^\ab)$. For this, it suffices to prove that
(i) $[K_1:K]$ divides $\vt^\ab$ and (ii) $[K_1:K]$ divides some power of $n$.

Since the extension $L/K$ is cyclic, so is $K_1/K$,
and therefore $K_1\subseteq K_F^\ab$ and
\[ [K_1:K]\ \big|\ [K_F^\ab:K]=\vt^\ab.\]
This proves (i).
On the other hand,
$[K_1:K]\ \big |\ [L:K]\ |\ n^l$.
This proves (ii)  and thus completes the proof of Claim 1.

We now continue with the proof of Lemma~\ref{l:n[a]}.
The field $L$ is a cyclic extension of $K$, and hence a cyclic extension of $K_1$.
Therefore, there exists a unique subextension $K_1 \subset K_n \subset L$
such that $[K_n: K_1]=n$, as shown in the diagram below.
\[ \xymatrix @C=2pc @R=2pc
{ F \ar@{<-}[ddr]
&  L \ar@{<-}[d]  \\
                & K_n \ar@{<-}[d]^-{\text{degree  $n$}}  \\
                    & K_1 \ar@{<-}[d]  \\
                    & K \ar@/_5pc/[uuu]_{\rm cyclic} }\]

Set  $\G_1 := \Gal(K^s/K_1)$ and  $\G_n=\Gal(K^s/K_n)$. Since $[K_n:K_1] = n$,
$\G_n$ is a subgroup of $\G_1$ of index $n$.

\smallskip
{\bf Claim 2:} The images of $\G_n$ and $\G_1$ in $\Aut(M)$ coincide.

\smallskip
Claim 2 is equivalent to saying that $\G_n \cdot \Delta = \G_1\cdot\Delta$.
Since $F$ contains $K_1$, $\G_1$ contains $\Delta$. Thus we need to show that
\[ \G_n \cdot \Delta = \G_1. \]
By the Galois correspondence it suffices to check that
\begin{equation} \label{e.Galois}
(K^s)^{\G_n \cdot\hs \Delta} = (K^s)^{\G_1}.
\end{equation}
The left hand side of~\eqref{e.Galois} consists of elements of $K^s$ that are fixed by both $\G_n$ and $\Delta$.
The fixed field of $\G_n$ is $K_n$ and the fixed field of $\Delta$ is $F$. Thus
\[(K^s)^{\G_n \cdot\hs\Delta} = (K^s)^{\G_n} \cap (K^s)^\Delta = K_n \cap F  = K_1, \]
where the last equality follows from the inclusions
\[K_1\coloneqq L\cap F\supseteq K_n \cap F \supseteq K_1\cap F=K_1\hs.\]
On the other hand, since $\G_1=\Gal(K^s/K_1)$, the right hand side of~\eqref{e.Galois}
is also $K_1$.
This proves \eqref{e.Galois} and Claim 2.

\smallskip
It follows from Claim 2 that  $M_{\G_1} = M_{\G_n}$\hs.
By Lemma~\ref{l:mult-index},
the transfer map \[ \Tsf_{\G_1/\G_n} \colon M_{\G_1, \Tors} \to M_{\G_n, \Tors} \] is multiplication by $n$.
Now observe that the transfer map $\Tsf_{\G_K/ \G_L} \colon M_{\G_K, \Tors} \to M_{\G_L, \Tors}$
factors as the composition
\[ \Tsf_{\G_K/ \G_L} \; \colon \; M_{\G_K, \Tors} \xrightarrow{\Tsf_{\G_K/ \G_1}}
M_{\G_1, \Tors} \xrightarrow{\Tsf_{\G_1/ \G_n}}
M_{\G_n, \Tors} \xrightarrow{\Tsf_{\G_n/ \G_L}}
M_{\G_L, \Tors} . \]
Since $\alpha$ is $n$-torsion and the middle arrow is multiplication by $n$, we conclude that
$\Tsf_{\G_K/ \G_L}$ sends $\alpha$ to $0$.
This completes the proof of Lemma \ref{l:n[a]}.
\end{proof}

\begin{proof}[Proof of Theorem \ref{t:n[a]-l}]
Since $K$ is a non-archimedean local field,
we have a bijective morphism of pointed sets
\[\ab\colon\, \Hon(K,G)\,\isoto\, \Ho^1_\ab(K,G)\coloneqq\HH^1(F,T^\ssc\to T);\]
see \cite[Theorem 5.5.1(1)]{BK}. We denote
\[\xi_\ab=\ab(\xi)\in \HH^1(K,T^\ssc\to T).\]
Since $\xi^{\di n}=1$, by Proposition \ref{p:ab-n}(iv) we have $\xi_\ab^n=1$.
Recall that
\[M=\pi_1^\alg(G)=\coker\big[\X_*(T^\ssc)\to\X_*(T)\big].\]
Let $\alpha\in M_{\G_K,\Tors}$ denote the preimage of $\xi_\ab$ under the Tate-Nakayama  isomorphism
\[  M_{\G_K,\Tors}\isoto \HH^1(K,T^\ssc\to T) \]
of \cite[Theorem 4.2.7(1)]{BK}.
Since $\xi_\ab^n=1$, we have $n\hs\alpha=0$.
By Lemma \ref{l:n[a]} we have $\Tsf_{\G_K/\G_L}(\alpha)=0$.

By \cite[Proposition 4.5.1]{BK}, the following diagram commutes:
\begin{equation*}
\begin{aligned}
\xymatrix@C=12mm{
\HH^1(K,T^\ssc\to T)\ar[r]^-\sim\ar[d]_-{\Res_{L/K}}  & M_{\G_K,\Tors}\ar[d]^{\Tsf_{\G_K/\G_L}}\\
\HH^1(L,T^\ssc\to T)\ar[r]^-\sim                      & M_{\G_L,\Tors}
}
\end{aligned}
\end{equation*}
By Lemma~\ref{l:n[a]}, $\Tsf_{\G_K/\G_L}(\alpha)=0$.
We now see from the diagram that  $\Res_{L/K}(\xi_\ab)=1$,
whence by Proposition \ref{p:ab}(2),
it follows from the bijectivity of the abelianization map that
$\Res_{L/K}(\xi)=1$, as desired.
\end{proof}

\section{Period and index in the Sylow-cyclic case}
\label{sect.main2}

Let $G$ be a reductive group over a global field $K$.
Recall that $\Sha^1(K,G)$ denotes the Tate-Shafarevich kernel:
\[\Sha^1(K,G)=\ker\Big[\Hon(K,G)\to\prod_{v\in \V_K} \Hon(K_v,G)\Big]\]
where $\V_K $ denotes the set of places of $K$.

\begin{definition}
Let $K$ be a global field, and let $S\subset\V_K$ be a finite set of places of $K$.
Write $S=S_f\cup S_\R\cup S_\C$\hs,
where $S_f=S\cap\V_f(K)$, $S_\R=S\cap\V_\R(K)$, $S_\C=S\cap\V_\C(K)$
are the subsets of finite, real and complex places in $S$, respectively.
We say that a finite Galois extension $L/K$ of degree $m$
is {\em of full local degree in $S$}, if for each $v\in S_f$
we have $[L_{w}:K_{v}]=m$ for a place $w$ of $L$ over $v$,
that is, $L\otimes_K K_v$ is a field.
Moreover, if $m$ is even, we further require that for each  $v\in\ S_\R$,
all places $w$ of $L$ over $v$ should be complex.
\end{definition}

\begin{proposition}\label{p:AT}
Let $K$ be a global field, $S\subset \V_K$ be a finite set of places,
and $F/K$ be a finite Galois extension in $K^s$.
Let $n_L\ge 1$ be an integer.
Then there exists a cyclic extension $L/K$ in $K^s$ of degree $n_L$ of full local degree in $S$
and such that $L\cap F=K$.
\end{proposition}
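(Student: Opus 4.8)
The plan is to deduce the proposition from the Grunwald--Wang theorem, in the form given by Artin and Tate in \emph{Class Field Theory}, Chapter~X; the one genuinely delicate point will be the Grunwald--Wang special case.

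\emph{Step 1: reduce $L\cap F=K$ to an inertness condition.} If $n_L=1$, take $L=K$; assume henceforth $n_L>1$. By the Chebotarev density theorem there is a non-archimedean place $v_0$ of $K$ --- and, in the number field case, one which is not dyadic --- with $v_0\notin S$ that splits completely in $F$; such places have positive density. Set $S'=S\cup\{v_0\}$, with finite part $S'_f=S_f\cup\{v_0\}$. I claim that every cyclic extension $L/K$ of degree $n_L$ that is \emph{inert} at each place of $S'_f$ (i.e.\ has local degree $n_L$ there) automatically satisfies $L\cap F=K$. Indeed, if $L\cap F\supsetneq K$, then $L\cap F$ corresponds to a proper subgroup of the cyclic group $\Gal(L/K)$ of order $n_L$, hence contains a subextension $E/K$ of prime degree $p$, and $E\subseteq F$. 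But then $v_0$ splits completely in $E$ (as it does in $F$), while at the same time $E\subseteq L$ and $v_0$ is inert in $L$, forcing $v_0$ to have local degree $p>1$ in $E$ --- a contradiction.

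\emph{Step 2: construct $L$.} It now suffices to produce a cyclic extension $L/K$ of degree $n_L$ that is inert at every place of $S'_f$ and in which, if $n_L$ is even, every place of $S_\R$ becomes complex. I would invoke Grunwald--Wang with the following prescribed local data on $S'$: at each $v\in S'_f$, the unramified extension of $K_v$ of degree $n_L$ (cyclic of degree $n_L$); at each $v\in S_\R$, the extension $\C/\R$ if $n_L$ is even and the trivial extension if $n_L$ is odd; and the trivial extension at each complex place of $S$. The least common multiple of these local degrees equals $n_L$ (it is attained at $v_0$), so the theorem produces a cyclic extension $L/K$ of degree exactly $n_L$ realizing this local data at every place of $S'$. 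By construction $L$ is inert over $S_f$, hence of full local degree there; if $n_L$ is even, every place of $L$ over a real place of $S$ is complex; and Step~1 gives $L\cap F=K$. Thus $L$ satisfies all requirements of the proposition.

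\emph{Step 3: the special case.} The step I expect to be the main obstacle is the Grunwald--Wang special case, which arises only in the number field case, only when $n_L$ is even, and only when $S'$ contains the finitely many ``special'' dyadic places of $K$; in that situation the prescribed local data must satisfy an additional parity condition. Our data is of the benign sort: at the real places the prescribed extension $\C/\R$ has degree $2$, below the $2$-power threshold that governs the special case, and at the finite places the prescribed extensions are \emph{unramified}, and an unramified cyclic extension of a local field is always the completion of some global cyclic extension of the same degree (in the number field case one can exhibit such an extension inside a suitable cyclotomic field of $K$), so it is never one of the ``inaccessible'' local extensions responsible for the special case. Hence Grunwald--Wang applies; and in the function field setting there is no special case at all. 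Verifying this point carefully is the only real work; granting it, Steps~1 and~2 complete the proof.
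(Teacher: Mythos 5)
Your Step~1 is sound and essentially the same observation as the paper's (the paper phrases it via tensor products, showing $E\otimes_K K_{v_0}$ is simultaneously a field and a subring of $K_{v_0}^{[F:K]}$; you phrase it via decomposition groups). Note that your Step~1 in fact needs only that $v_0$ has \emph{full local degree} in $L$, not that it is inert there: if the decomposition group at $v_0$ is all of $\Gal(L/K)$, then every nontrivial subfield of $L$ has nontrivial local degree at $v_0$. Keep this in mind, because it is what saves the argument once Step~3 is repaired.

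Steps~2 and~3 contain a genuine error. You invoke Grunwald--Wang with \emph{prescribed local extensions} (unramified of degree $n_L$ at each $v\in S'_f$), and you justify this in Step~3 by asserting that an unramified cyclic local extension ``is never one of the `inaccessible' local extensions responsible for the special case.'' That assertion is false, and it is false in the paradigm example: Wang's original counterexample to Grunwald's theorem is that there is \emph{no} cyclic degree-$8$ extension of $\Q$ whose completion at $2$ is the unramified extension of $\Q_2$ of degree $8$. Your argument carefully chooses the auxiliary place $v_0$ to be non-dyadic, but the original set $S$ may well contain dyadic places, and if $8\mid n_L$ your prescribed local data at such a place can be unrealizable. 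The parenthetical heuristic (``one can exhibit such an extension inside a suitable cyclotomic field of $K$'') does not work either: for the unramified degree-$8$ extension of $\Q_2$ the relevant cyclotomic field $\Q(\zeta_{255})$ is not cyclic over $\Q$, which is exactly the source of the obstruction.

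The fix, which is what the paper does, is not to prescribe local extensions at all but only local \emph{degrees}: cite Artin--Tate, Theorem~X.6, which produces a cyclic extension $L/K$ of degree $n_L$ having full local degree at every place of a prescribed finite set. That theorem is formulated precisely to be valid in the special case (it may hand you a ramified rather than unramified local extension at a troublesome dyadic place, but that is immaterial). Combined with your Step~1 argument --- weakened to use only full local degree at $v_0$, as noted above --- this gives the proposition. So the overall strategy is the same as the paper's; the gap is in trying to force the extension to be unramified over $S'_f$, which is both unnecessary and, in general, impossible.
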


\begin{proof}
By the Chebotarev density theorem, there exists a {\em finite} place $v_0$
such that $F\otimes_K K_{v_0}\simeq K_{v_0}^{[F:K]}$.
After extending $S$, we may and shall assume that $S$ contains $v_0$.
By \cite[Theorem X.6]{AT} there exists a cyclic extension $L/K$ in $K^s$
of degree $n_L$ of full local degree in $S$.
Write $E=F\cap L$ and $m_E=[E:K]$.
Since $E\otimes_K K_{v_0}$ is contained in the field $L\otimes_K K_{v_0}$,
we see that  $E\otimes_K K_{v_0}$ has no zero divisors,
and hence it is a field of degree $m_E$ over $K_{v_0}$\hs.
Since the field $E\otimes_K K_{v_0}$
is contained in the ring $F\otimes_K K_{v_0}\simeq K_{v_0}^{[F:K]}$,
we conclude that $m_E=1$, whence $F\cap L=K$.
\end{proof}

\begin{theorem}\label{t:Sha}
Let $G$ be a reductive group over a global field $K$, and let $\xi\in\Hon(K,G)$.
Assume that $\xi^{\di n}=1$ for some integer $n\ge 1$.
Let the finite group $\Theta_K=\Theta_K(M)$
and the finite Galois extension $F/K$ in $K^s$ with Galois group $\Theta_K$
be as in Section~\ref{sect.local}.
Then for any positive integer $m$ divisible by
$n\cdot\cp(n,\#\Theta)$,
there exists a cyclic field extension $L/K$
in $K^s$ of degree  $m$
such that $\Res_{L/K}(\xi)\in \Sha^1(L,G)$
and $L\cap F=K$.
\end{theorem}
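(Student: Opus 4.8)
The strategy is to reduce the global statement to the local analysis already carried out in Section~\ref{sect.local}, using the Cartesian description of $\Hon(K,G)$ from Theorem~\ref{t:cd} together with the existence of cyclic extensions of prescribed local behaviour provided by Proposition~\ref{p:AT}. First I would pass to the abelianized picture: write $\xi_\ab=\ab(\xi)\in\Hon_\ab(K,G)$ and $M=\pi_1^\alg(G)$, and let $F/K$ be the finite Galois extension cutting out $\ker[\G_K\to\Aut M]$, so that $\G(F/K)\cong\Theta$. Since $\xi^{\di n}=1$, we have $(\xi_\ab)^n=1$ in the abelian group $\Hon_\ab(K,G)$. The point of killing $\xi$ in the Tate--Shafarevich kernel is to arrange that its localizations $\loc_v(\Res_{L/K}\xi)$ vanish at all places $v$ of $L$; by the local Theorem~\ref{t:n[a]-l} (more precisely its consequence Corollary~\ref{c:n[a]-l}), at each place this is guaranteed once the local degree is divisible by $n\cdot\cp(n,\#\Theta_{K_v})$, and $\#\Theta_{K_v}$ divides $\#\Theta$, so a local degree divisible by $m$ suffices.

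\textbf{Construction of $L$.} I would apply Proposition~\ref{p:AT} with the finite Galois extension $F/K$ and with $n_L=m$, taking $S$ to be a finite set of places containing all archimedean places of $K$ and a suitable finite set of finite places (for instance, a finite set outside of which $\xi$ is already locally trivial, which exists because a cocycle representing $\xi$ is unramified almost everywhere). This yields a cyclic extension $L/K$ of degree $m$, of full local degree in $S$, with $L\cap F=K$. The condition $L\cap F=K$ ensures that $\G_L$ still surjects onto $\Theta$, so that $\pi_1^\alg(G)$, viewed as a $\G_L$-module, has the same image in $\Aut M$; this is what makes the local capacity bounds over $L_w$ match those over $K_v$.

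\textbf{Checking $\Res_{L/K}(\xi)\in\Sha^1(L,G)$.} I must verify $\loc_w(\Res_{L/K}\xi)=1$ for every place $w$ of $L$. Split into cases according to the place $v=w|_K$. If $v$ is a finite place in $S$: then $[L_w:K_v]=m$, which is divisible by $n\cdot\cp(n,\#\Theta_{K_v})$, so Corollary~\ref{c:n[a]-l} applied over $K_v$ gives $\Res_{L_w/K_v}(\loc_v\xi)=1$, i.e.\ $\loc_w(\Res_{L/K}\xi)=1$. If $v$ is a finite place outside $S$: then by the choice of $S$ we already have $\loc_v(\xi)=1$, so its restriction to $L_w$ is trivial as well. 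If $v$ is archimedean: when $m$ is even and $v$ is real, full local degree in $S$ forces $L_w\simeq\C$, hence $\Hon(L_w,G)=1$; when $v$ is complex the same holds; and when $m$ is odd, one checks $\loc_v(\xi)=1$ already over $K_v$, since $(\loc_v\xi)^{\dit n}=1$ with $n$ odd forces $\loc_v\xi=1$ in the $2$-torsion group $\Hon(\R,G)$ (using Proposition~\ref{p:ab-n}(ii) and the fact that $\per(\xi)\mid n$ has $\xi^{\di n}=1$). In all cases $\loc_w(\Res_{L/K}\xi)=1$, so $\Res_{L/K}(\xi)\in\Sha^1(L,G)$, and $L\cap F=K$ by construction.

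\textbf{Main obstacle.} The delicate point is the bookkeeping at the archimedean places when $m$ is odd: one cannot force the real places of $L$ to become complex, so one genuinely needs to know that $\loc_\infty(\xi)$ is already trivial. This follows because $\xi^{\di n}=1$ with $n$ odd implies, via the Cartesian square~\eqref{e:Cartesian} and the definition of the power operation, that $(\loc_\infty\xi)^{\dit n}=1$, and since $\Hon(K_v,G)$ for $v$ real is $2$-torsion as a pointed set under $\dit$, oddness of $n$ forces $\loc_\infty\xi=1$. The other subtlety is ensuring $\#\Theta_{K_v}\mid\#\Theta$, i.e.\ that the decomposition group at $v$ injects into $\Theta$, which is immediate since it is a subgroup; combined with $l\le\lfloor\log_2\#\Theta\rfloor$ from Subsection~\ref{ss:capacity}, the divisibility $n\cdot\cp(n,\#\Theta_{K_v})\mid m$ holds. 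Once these are in place, the rest is the routine case analysis above.
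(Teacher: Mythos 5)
Your proposal is correct and follows essentially the same route as the paper: take $S$ to be the (finite) set of places where $\xi$ is locally nontrivial, invoke Proposition~\ref{p:AT} to get a cyclic $L/K$ of degree $m$ of full local degree in $S$ with $L\cap F=K$, kill the finite local classes via Corollary~\ref{c:n[a]-l} using $\#\Theta_{K_v}\mid\#\Theta$, and handle the real places by observing that odd $n$ forces $\loc_v\xi=1$ while even $m$ forces $L_w\simeq\C$. The opening passage about $\xi_\ab$ is harmless but unused; everything that matters matches the paper's argument.
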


\begin{proof}
Let $S(\xi)$ denote the set of places of $K$ for which $\xi_{v}\neq 1$
where $\xi_v=\loc_v(\xi)\in \Hon(K_v,G)$.
We write $S(\xi)=S_\R(\xi)\cup S_f(\xi)$, where $S_\R$ denotes
the set of real places and $S_f$ denotes the set of finite places.
The set $S(\xi)$ is finite;
see \cite[Proposition 6.6 on page 297]{PR} in the number field case, and
the case of a global function field is similar.

Let $v\in\V_\R(K)$. Write $\xi_v=\loc_v(\xi)$.
Since $\xi^{\di n}=1$, we have $\xi_v^{\dit n}=1$.
If $n$ is odd, then
\[\xi_v=\xi_v^{\dit n}=1,\quad\ \text{whence}\ \, v\notin S_\R(\xi).\]
We see that if $n$ is odd, then $S_\R(\xi)=\varnothing$.

By Proposition \ref{p:AT}
there exists a cyclic extension $L/K$ in $K^s$ of degree $m$
of full local degree in $S$ and such that  $F\cap L=K$.
For each $v\in S_f(\xi)$, the decomposition group $\Theta_{K_v}$
is a subgroup of $\Theta_{K}$ (defined up to conjugacy),
and therefore we have
\[ n\cdot\cp(n,\#\Theta_{K_v})\ \big|\ n\cdot\cp(n,\#\Theta_{K})\ \big|\ m,\]
whence by Corollary  \ref{c:n[a]-l} the cyclic extension $L_{w}/K_{v}$ of degree $m$
splits $\xi_{v}$\hs.
If $S_\R(\xi)$ is non-empty, then $n$ and $m$ are even, and for all $v\in S_\R(\xi)$,
the extension $L_{w}/K_{v}\simeq \C/\R$
splits $\xi_{v}$\hs.
This means that $\Res_{L/K}(\xi)\in \Sha^1(L,G)$, as desired.
\end{proof}

\begin{corollary}\label{c:Sha}
Let $r\ge 1+ \lfloor\log_2\vt\rfloor$ where  $\vt=\#\Theta_K$.
Then Theorem \ref{t:Sha} holds with $m=n^r$.
\end{corollary}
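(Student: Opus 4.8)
The plan is to deduce Corollary~\ref{c:Sha} directly from Theorem~\ref{t:Sha}; the only thing to check is that the integer $m=n^r$ satisfies the divisibility hypothesis $n\cdot\cp(n,\#\Theta_K)\mid m$ appearing in that theorem.

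First I would invoke the elementary estimate from Subsection~\ref{ss:capacity}. Writing $\cp(n,\vt)=\prod_i p_i^{m_i}$ for its prime factorization, one has
\[ 2^{\sum_i m_i}\ \le\ \prod_i p_i^{m_i}\ =\ \cp(n,\vt)\ \le\ \vt, \]
so $\sum_i m_i\le\lfloor\log_2\vt\rfloor$; and since each prime $p_i$ divides $n$, the integer $\cp(n,\vt)$ divides $n^{l}$ with $l=\underset i\max\, m_i\le\sum_i m_i\le\lfloor\log_2\vt\rfloor$. Hence $n\cdot\cp(n,\vt)$ divides $n^{1+l}$, and therefore divides $n^{1+\lfloor\log_2\vt\rfloor}$. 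This is exactly the content of Remark~\ref{r:cp}.

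Next, since by hypothesis $r\ge 1+\lfloor\log_2\vt\rfloor$, we have that $n^{1+\lfloor\log_2\vt\rfloor}$ divides $n^r$, hence $n\cdot\cp(n,\vt)$ divides $m:=n^r$. Recalling that $\Theta=\Theta_K(M)=\Theta_K$ and $\vt=\#\Theta_K$, the divisibility hypothesis of Theorem~\ref{t:Sha} is satisfied by $m=n^r$. Applying that theorem to this $m$ then produces a cyclic extension $L/K$ in $K^s$ of degree $m=n^r$ with $\Res_{L/K}(\xi)\in\Sha^1(L,G)$ and $L\cap F=K$, which is precisely the assertion that Theorem~\ref{t:Sha} holds with $m=n^r$.

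I do not expect a genuine obstacle here: all the substantive work is carried out in Theorem~\ref{t:Sha} (which in turn rests on Corollary~\ref{c:n[a]-l} and Proposition~\ref{p:AT}), and what remains is only the arithmetic observation above. The one point requiring mild care is the chain of inequalities $\underset i\max\, m_i\le\sum_i m_i\le\log_2\cp(n,\vt)\le\log_2\vt$ from Subsection~\ref{ss:capacity}, which is what licenses replacing the hypothesis $n\cdot\cp(n,\vt)\mid m$ by the explicit choice $m=n^r$ with $r\ge 1+\lfloor\log_2\vt\rfloor$.
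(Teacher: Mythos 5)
Your proposal is correct and is essentially the paper's own argument: the paper's proof simply cites Remark~\ref{r:cp} (which records that $n\cdot\cp(n,\vt)\mid n^{1+\lfloor\log_2\vt\rfloor}$, a consequence of the estimate $\eqref{e:cp}$ in Subsection~\ref{ss:capacity}) and concludes $n\cdot\cp(n,\vt)\mid n^r$. You unfold that citation explicitly and then apply Theorem~\ref{t:Sha}, which is exactly what is intended.
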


\begin{proof}
Indeed, then $n\cdot\cp(n,\vt)\,|\, n^r$; see formula~\eqref{e.cp} in Section~\ref{sect.local}.
\end{proof}

Following Andersen \cite{Andersen}, we say that a finite group is {\em Sylow-cyclic} if all its Sylow subgroups are cyclic
(Sansuc~\cite[p.~13]{Sansuc} calls such groups {\em metacyclic}).

\begin{proposition}
\label{p:B11}
Let $G$ be a reductive group over a global field $K$
with algebraic fundamental group $M=\pi_1^\alg(G)$,
and assume that the finite group $\Theta_K\subseteq\Aut M$ is Sylow-cyclic.
Then for any finite separable extension $L$ of $K$ we have $\Sha^1(L,G)=1$.
\end{proposition}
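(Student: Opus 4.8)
The plan is to reduce the vanishing of $\Sha^1(L,G)$ to a statement about the algebraic fundamental group $M$, using the explicit description of $\Hon$ over a global field from Theorem~\ref{t:BK}, and then to exploit the Sylow-cyclic hypothesis on $\Theta_K$. First I would note that it suffices to treat $L=K$: for a finite separable extension $L/K$, the group $\Theta_L\subseteq\Aut M$ is a subgroup of $\Theta_K$, hence is again Sylow-cyclic (a subgroup of a group all of whose Sylow subgroups are cyclic has the same property, since Sylow subgroups of the subgroup embed into Sylow subgroups of the whole group), and $\pi_1^\alg(G_L)=M$ as a $\G_L$-module. So replacing $K$ by $L$, we must show $\Sha^1(K,G)=1$ under the hypothesis that $\Theta_K$ is Sylow-cyclic.

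Next I would pass from $\Hon$ to abelian cohomology. By the explicit description underlying Theorem~\ref{t:BK}, an element $\xi\in\Hon(K,G)$ is determined by the pair $(\phi_M(\xi),\loc_\infty(\xi))$, and $\xi\in\Sha^1(K,G)$ forces $\loc_\infty(\xi)=1$ (all archimedean localizations are trivial) as well as $\loc_v(\xi)=1$ for all finite $v$; since $\theta_v$ factors through $\ab$ and $\loc_v$, vanishing of all local classes together with the Tate--Nakayama description shows that the image $\phi_M(\xi)\in(M[\V_L]_0)_\Gt$ lies in the subgroup cut out by all the localization maps $l_v$ being trivial. In other words, $\xi\in\Sha^1(K,G)$ is controlled by the Tate--Shafarevich-type group
\[
\Sha^1_\ab\;:=\;\ker\Big[\Hon_\ab(K,G)\to\prod_{v\in\V_K}\Hon_\ab(K_v,G)\Big],
\]
and by the Cartesian/explicit description it is enough to prove $\Sha^1_\ab=0$. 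This is now a purely cohomological statement: $\Hon_\ab(K,G)=\H^1(K,T^\ssc\to T)$ for a maximal torus $T$, depending only on the $\G_K$-module structure on $M=\pi_1^\alg(G)$, and $\Sha^1_\ab$ depends only on $M$ as a $\Theta_K$-module, i.e. only on the finite Galois extension $F/K$ with $\Gal(F/K)\cong\Theta_K$ and the $\Theta_K$-action on $M$.

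Finally I would invoke the arithmetic input. The key point is that for a global field $K$ and a finite Galois extension $F/K$ with $\Gal(F/K)$ Sylow-cyclic, the relevant Tate--Shafarevich groups of $M$ vanish: by Chebotarev, for each prime $p$ one can find a finite place $v$ whose decomposition group in $\Gal(F/K)$ is a full Sylow $p$-subgroup, which by hypothesis is cyclic, and cyclic decomposition groups detect the $p$-part of $\Sha$ (this is the standard argument, going back to Sansuc~\cite[proof of Corollary~4.8 and surrounding discussion]{Sansuc} and to the Poitou--Tate machinery; it is phrased for $M$ via the exact sequences relating $\H^i$ of $T$, $T^\ssc$, $\mu$). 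Concretely, one writes $\H^1(K,T^\ssc\to T)$ in terms of $\H^i(K,\cdot)$ of tori and the finite group $\mu$, and for each the local--global principle over a Sylow-cyclic extension gives triviality of the Tate--Shafarevich kernel. The main obstacle is packaging this last step cleanly: one must make sure the Chebotarev-plus-cyclic-decomposition-group argument applies to the hypercohomology group $\H^1(K,T^\ssc\to T)$ and not merely to $\Hon$ or $\Ho^2$ of a single torus, and that the reduction to $p$-primary parts is compatible with the coinvariants description $(M[\V_L]_0)_\Gt$. I expect the cleanest route is to cite the corresponding result already in the literature (Sansuc, or \cite{BK}) for $\Sha$ of $\Hon_\ab$ over Sylow-cyclic extensions, and then glue via the explicit description of Section~\ref{s:explicit}; the genuinely new content here is only the observation that the hypotheses are inherited by $L/K$ and the translation through $\phi_M$ and $\loc_\infty$.
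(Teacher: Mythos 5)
Your proposal is correct and follows essentially the same route as the paper: the paper's entire proof consists of observing that $\Theta_L$ is a subgroup of $\Theta_K$ and hence Sylow-cyclic, and then citing \cite[Corollary 8.4.5(1)]{BK}, which is exactly the literature result (vanishing of $\Sha^1$ when $\Theta$ is Sylow-cyclic, via the Chebotarev/cyclic-decomposition-group argument you sketch) that you propose to invoke. The extra detail you supply about how that cited result is proved is a reasonable outline of its content but is not needed once the citation is in place.
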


\begin{proof}
The group $\Theta_L$ is a subgroup of $\Theta_K$ and hence is Sylow-cyclic as well.
Now the proposition follows from  \cite[Corollary 7.4.4(1)]{BK}.
\end{proof}

\begin{corollary} \label{cor.sylow-cyclic}
Let $G$ be a reductive group over a global field $K$,
and assume that $\Theta_K$ is Sylow-cyclic.
Let $\xi\in\Hon(K,G)$, and assume that $\xi^{\di n}=1$ for some positive integer $n$.
Then $\xi$ can be split by a cyclic extension $L/K$
of degree  $n\cdot \cp(n,\#\Theta_K)$.
\end{corollary}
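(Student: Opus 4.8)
The plan is to combine Theorem~\ref{t:Sha} with Proposition~\ref{p:B11}; both inputs are already available, so the argument is short and essentially formal.

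First I would apply Theorem~\ref{t:Sha} with the specific choice $m = n\cdot\cp(n,\#\Theta_K)$. This integer is (trivially) divisible by $n\cdot\cp(n,\#\Theta_K)$ and positive, so the hypotheses of that theorem are met once we recall that $\xi^{\di n}=1$ by assumption. The theorem then produces a cyclic field extension $L/K$ in $K^s$ of degree exactly $m = n\cdot\cp(n,\#\Theta_K)$ such that $\Res_{L/K}(\xi)\in\Sha^1(L,G)$; the auxiliary conclusion $L\cap F = K$ provided by Theorem~\ref{t:Sha} will not be needed here.

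Next I would invoke the running hypothesis that $\Theta_K$ is Sylow-cyclic. By Proposition~\ref{p:B11}, for every finite separable extension $L/K$ one has $\Sha^1(L,G)=1$ (the proof of that proposition already observes that $\Theta_L\subseteq\Theta_K$ is Sylow-cyclic as well, so the hypothesis propagates to $L$). In particular $\Sha^1(L,G)=1$ for the extension $L$ constructed in the previous step, hence $\Res_{L/K}(\xi)=1$. Thus the cyclic extension $L/K$ of degree $n\cdot\cp(n,\#\Theta_K)$ splits $\xi$, which is exactly the assertion of the corollary.

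There is no genuine obstacle in this proof: all the work has been done in Theorem~\ref{t:Sha} and Proposition~\ref{p:B11}, and the only point to verify is the bookkeeping that the hypotheses of those two results hold for our $\xi$ and our choice of $m$, which is immediate.
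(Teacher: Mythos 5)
Your proof is correct and follows exactly the paper's own argument: apply Theorem~\ref{t:Sha} with $m=n\cdot\cp(n,\#\Theta_K)$ to land $\Res_{L/K}(\xi)$ in $\Sha^1(L,G)$, then use Proposition~\ref{p:B11} (via the Sylow-cyclic hypothesis) to conclude that this group is trivial. Nothing is missing.
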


\begin{proof}
Indeed, by Theorem \ref{t:Sha}
there exists a cyclic extension $L/K$  of degree $n\cdot\cp(n,\#\Theta_K)$
such that
\[\Res_{L/K}(\xi)\in\Sha^1(L,G),\]
and by Proposition \ref{p:B11} we have  $\Sha^1(L,G)=\{1\}$.
Thus $\Res_{L/K}(\xi)=1$, that is, $L/K$ splits $\xi$, as desired.
\end{proof}

\begin{definition}
Let $G$ be a reductive group over a field $K$
with algebraic fundamental group $M=\pi_1^\alg(G)$,
and let $\Theta_K$ denote the image of the absolute Galois group $\G_K$ in $\Aut M$.
We say that  $\pi_1^{\alg}(G)$ is {\em split over} $K$ if $\Theta_K=\{1\}$,
that is, if $\G_K$ acts on $M$ trivially.
\end{definition}

\begin{remark}\label{r:split}
If  $G$ is split, then  $\pi_1^\alg(G)$ is split as well.
Indeed, choose a split maximal torus $T\subseteq G$;
then $\Gal(K^s/K)$ acts trivially on $\X_*(T)$,
and hence on $\pi_1^\alg(G)=\X_*(T)/\X_*(T^\ssc)$.
However, the assertion that  $\pi_1^\alg(G)$ is split over $K$ does not imply that $G$ is split.
For instance, take $G=\SO_n$ over $\R$ with $n\ge 3$.
Then $\pi_1^\alg(G)\cong\Z/2\Z$, hence split, whereas $G$ is anisotropic, hence not split.
\end{remark}

We are now ready to proceed with the proof of Theorem~\ref{thm.main2},
which we restate here for the reader's convenience.

\begin{theorem}[Theorem \ref{thm.main2}]
\label{t:split}
Let $G$ be a reductive group over a local or global field $K$ for which
the algebraic fundamental group $M=\pi_1^\alg(G)$ is split over $K$.
Then for any $\xi\in \Hon(K,G)$ we have $\per(\xi)=\ind(\xi)$.
\end{theorem}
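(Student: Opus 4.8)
The plan is to deduce the equality from the two divisibilities $\per(\xi)\mid\ind(\xi)$ and $\ind(\xi)\mid\per(\xi)$. The first is exactly Theorem~\ref{t:period-divides-index}, so the only thing left to establish is $\ind(\xi)\mid\per(\xi)$; concretely, I want to exhibit a finite separable extension $L/K$ that splits $\xi$ and satisfies $[L:K]\mid\per(\xi)$.

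First I would dispose of the degenerate situations. If $\per(\xi)=1$, then $\xi=\xi^{\di \per(\xi)}=1$ by Proposition~\ref{p:xi^per}, so $\ind(\xi)=1$ as well; in particular this covers the case $K\simeq\C$, where $\Hon(K,G)=1$. If $K\simeq\R$, the statement is Proposition~\ref{p:same-R}. So from now on I assume that $K$ is a non-archimedean local field or a global field, and I set $n=\per(\xi)>1$, noting that $\xi^{\di n}=1$ by Proposition~\ref{p:xi^per}.

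The key point is that the hypothesis that $M=\pi_1^\alg(G)$ is split over $K$ means precisely that $\Theta_K=\{1\}$, so $\#\Theta_K=\vt=1$ and hence also $\vt^\ab=\vt^\ur=1$ in the notation of Subsection~\ref{ss:G-M-cp}. Therefore all the capacity corrections collapse: $\cp(n,\vt^\ur)=\cp(n,\#\Theta_K)=1$. In the non-archimedean local case I then invoke Theorem~\ref{t:n[a]-l}(b): any unramified extension $L/K$ of degree divisible by $n\cdot\cp(n,\vt^\ur)=n$ splits $\xi$, and taking $L/K$ to be the unramified extension of degree exactly $n$ gives $\ind(\xi)\mid[L:K]=n$. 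In the global case, the trivial group $\Theta_K$ is Sylow-cyclic, so Corollary~\ref{cor.sylow-cyclic} applies and yields a cyclic extension $L/K$ of degree $n\cdot\cp(n,\#\Theta_K)=n$ splitting $\xi$, again giving $\ind(\xi)\mid n$. In both cases $\ind(\xi)\mid n=\per(\xi)$, which combined with Theorem~\ref{t:period-divides-index} gives $\per(\xi)=\ind(\xi)$.

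I do not anticipate a real obstacle here: the substantive input has already been provided by Theorem~\ref{t:n[a]-l} in the local case and by Theorem~\ref{t:Sha} together with Corollary~\ref{cor.sylow-cyclic} in the global case, and what remains is essentially bookkeeping. The only points requiring a little care are the degenerate cases $n=1$, $K\simeq\R$, and $K\simeq\C$, the trivial evaluation $\cp(n,1)=1$, and the observation that the extensions produced (unramified, respectively cyclic) are automatically separable, hence admissible for the definition of $\ind(\xi)$.
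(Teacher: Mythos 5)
Your proof is correct and follows essentially the same route as the paper: combine $\per(\xi)\mid\ind(\xi)$ from Theorem~\ref{t:period-divides-index} with the observation that $\Theta_K=\{1\}$ forces all capacity factors to equal $1$, then apply Corollary~\ref{cor.sylow-cyclic} in the global case (the trivial group being Sylow-cyclic) to produce a splitting extension of degree exactly $n=\per(\xi)$. The paper dismisses the local case as ``similar (and easier)''; your explicit treatment via Theorem~\ref{t:n[a]-l}(b) with $\vt^\ur=1$, together with the care taken over the degenerate cases $n=1$, $K\simeq\R$, $K\simeq\C$, is a faithful and complete instantiation of that remark.
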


\begin{proof} Let $K$ be a global field.
Suppose $\per(\xi) = n$.
By Theorem  \ref{thm:divides} we have $n=\per(\xi) \,|\, \ind(\xi)$.
On the other hand, since $\Theta_K=\{1\}$,
it is Sylow-cyclic, and
Corollary~\ref{cor.sylow-cyclic} tells us that $\xi$ can be split
by a cyclic extension $L/K$
of degree $n\cdot\cp(n,\#\Theta_K)=n\cdot 1 =n$.
This shows that $\ind(\xi) \, | \, n$. We conclude that
$\ind(\xi) = n = \per(\xi)$, which proves the theorem in the global case.
The local case is similar: we use Corollary \ref{c:n[a]-l}(a) instead of Corollary~\ref{cor.sylow-cyclic}.
\end{proof}

\section{The index divides a power of the period: the global case}
\label{sect.main4-0}

In this section we prove Theorem~\ref{thm.main4} in the case
where $K$ is a global field.

\begin{theorem}
\label{t:ind-per-d}
Let  $\xi \in \Hon(K, G)$ where $G$ is a reductive group over a global field $K$.
Then  $\ind(\xi)$ divides $\per(\xi)^d$ for some positive integer $d$.
\end{theorem}

From this theorem and Theorem \ref{t:divides}, it follows that  the period and the index of $\xi$ have the same prime factors.

As a first step, we establish a version of Theorem~\ref{t:ind-per-d} for a torus.
Here $K$ is allowed to be an arbitrary field.

\begin{proposition} \label{lem.main1b-1}
Let $T$ be a torus of dimension $d$ over a field $K$, and let  $\xi \in \Hon(K, T)$.
Assume that $\xi^n = 1$ for some $n>0$. Then $\xi$ can be split
by a finite field extension $L/K$ whose degree $[L:K]$ divides $n^d$.
\end{proposition}

\begin{proof} The exact sequence
$\xymatrix{ 1\ar[r] & T[n] \ar[r] & T \ar[r]^{\times\, n} & T \ar[r] & 1 }$
of algebraic groups induces an exact sequence
\[ \xymatrix @C=1pc @R=.5pc{ \Hon(K, T[n]) \ar[rr] & & \Hon(K,T) \ar[rr]^{\times\,  n} & & \Hon(K, T) \\
                   \alpha\, \ar@{|->}[rr] & & \, \xi\,  \ar@{|->}[rr] & & \, 1} \]
in flat cohomology. Here $T[n]$ is the $n$-torsion subgroup of $T$ \footnote{\,If $\Char(K)$ does not divide $n$, then $T[n]$ is smooth, and Galois cohomology coincides with flat cohomology. In general $T[n]$ may not be smooth and we need to use flat cohomology to get an exact sequence.}.
  Since $\xi^n = 1$, the above sequence shows that $\xi$ is the image of some $\alpha \in \Hon(K, T[n])$. It remains to show that $\alpha$ can be split by a field extension of degree dividing $n^d$. Since every class in $\Hon(K, T[n])$ is represented by a $T[n]$-torsor over $K$, it suffices to show that
 \begin{equation} \label{e.main1b-1}
 \begin{aligned}
 &\text{Every $T[n]$-torsor $\pi \colon X \to \Spec(K)$
 can be split}\\ &\text{\qquad\qquad\qquad by a field
 extension $L/K$ of degree dividing $n^d$.}
 \end{aligned}
 \end{equation}

\smallskip
Write $n = p^r \cdot n_0$, where $p = \text{char-exp}(K)$ is the characteristic exponent of $K$,
and $n_0$ is prime to $p$. If $p = 1$, then
$p^r = 1$ and $n_0 = n$. Then $T[n] \simeq T[p^r] \times T[n_0]$. (If $a$ and $b$ are integers such that $ap^r + b n_0 = 1$, then this isomorphism is given explicitly by $t \to (t^{bn_0}, \, t^{ap^r})$.)
Here $T[p^r]$ is an infinitesimal group over $K$ and
$T[n_0]$ is a smooth finite group of order $n_0^d$ over $K$
(which is a $K$-form of $(\Z/ n_0\Z)^d$). Since $\Hon(K, T[n]) =
\Hon(K, T[p^r]) \times \Hon(K, T[n_0])$, it suffices to prove~\eqref{e.main1b-1} in two special
cases, where (1) $n = n_0$ is prime to $p$ and (2) $n = p^r$.

\medskip
{\bf Case 1:} $n$ is prime to $p$.
In this case $T[n]$ is an \'etale (or equivalently, smooth) finite  $K$-group
(of order $n^d$). Hence, $X$ is also smooth over $K$.

\smallskip
{\bf Claim:} Let $H$ be a smooth  finite algebraic group of order $m$ over a field $K$.
 Then every $H$-torsor $\pi \colon X \to \Spec(K)$ can
 be split by a finite field extension $L/K$
 whose degree $[L:K]$ divides $m$.

\smallskip
Over the algebraic closure $\overline{K}$ of $K$, $X$ becomes isomorphic to $H$.
In particular, $X_{\overline{K}}$ is affine over $\overline{K}$.
By~\cite[Proposition 2.7.1(xiii)]{EGA4}, this implies that $X$ is affine over $K$.
Let us denote the coordinate ring of this variety by $A = K[X]$.
Then $\dim_K(A) = m$ is finite. Consequently,
every element $x \in A$ is a root of a monic polynomial with coefficients in $K$.
In other words, the ring $A$ is integral over $K$.

Now assume for a moment that $X$ is irreducible over $K$. Then $A$ is an integral domain.
Since $K \subset A$ is an integral extension of integral domains, and $K$ is a field,~\cite[Proposition 5.7]{AM}
tells us that $A$ is also a field.
In this case we can take $L = A$.
Indeed, $X = \Spec(A) = \Spec(L)$, and so $X$ has an $L$-point. In other words, $L$ splits $\pi$.
Here $[L:K] = \dim_K(A) = m$.

Now consider the general case where $X$ is allowed to have multiple irreducible components
$X_1, \ldots, X_t$ over $K$. In this case $H$ transitively permutes $X_1, \ldots, X_t$.
Let $H_1$ be the stabilizer of $X_1$ in $H$
and let $x_1\in X_1(K^s)$; then
\[ H_1(K^s)=\big\{ h \in H(K^s)\ \big|\ h \cdot x_1\in X_1(K^s)\big\}.\]
We see that $H_1(K^s)$ acts transitively on $X_1(K^s)$, and so $X_1$ is an $H_1$-torsor.
As we saw above, $X_1 = \Spec(A_1)$ where $A_1$ is a field containing $K$.
Taking $L$ be the $A_1$, we see that $X_1$, and hence $X$, have an $L$-point;
thus $L$ splits $\pi$.
On the other hand,
\[ [L:K] =\, \#H_1\,|\ \#H = m.\]
This completes the proof of the claim.

In Case 1, Proposition \ref{lem.main1b-1}  follows from the claim applied
to the smooth finite group $H=T[n]$ of order $m=n^d$.
\medskip

{\bf Case 2:} $\text{char-exp}(K) = p > 1$ and $n = p^r$.
Once again, by~\cite[Proposition 2.7.1(xiii)]{EGA4}, $X = \Spec(A)$ is an affine variety over $K$, and $\dim_K(A) = \# (T[n]) = n^d$. Here
$A$ denotes the coordinate ring of $X$, as in Case 1.

Since $T[n]$ is an infinitesimal group, over any perfect field $K'$ containing $K$ we have $H^1(K', T[n]) = 1$.
(This follows, for instance, from~\cite[Lemma 11.1(b)]{reichstein-rd} with $G = T[n]$ and $G_{\rm red} = 1$.)
Letting $K' = K^{\rm pc}$ be the perfect closure of $K$
(that is, the largest purely inseparable extension of $K$ inside $\overline{K}$),
we conclude that
$X$ has a $K^{\rm pc}$-point or, equivalently, there exists a homomorphism of $A$-algebras
$\eta \colon A \to K^{\rm pc}$.
Let $\mathcal{M} \subset A$ be the
kernel of $\eta$ and $L = A/\mathcal{M}$ be the residue field of $\eta$.
By our construction, $L$ is an intermediate field between $K$
and its perfect closure $K^{\rm pc}$.
This tells us that $L$ is a purely inseparable extension of $K$ and consequently,
$[L:K]$ is a power of $p$. On the other hand,
$[L:K] = \dim_K(L) \leqslant \dim_K(A) = n^d = p^{rd}$.
We conclude that $[L:K]$ divides $p^{rd} = n^d$. This completes the proof of Proposition \ref{lem.main1b-1}.
\end{proof}

\begin{lemma}  {\rm
(\cite[Proof of Lemma 3.2]{MSh})}
\label{l:res-mng}
Let $\G$ be a finite group and $M$ be a finitely generated $\G$-module.
Let $\mg(M)$ denote the minimal number of generators
of the finitely generated abelian group $M$.
Then there exists a resolution
\begin{equation}\label{e:M-M0-M-1}
0\to M^{-1}\labelto{\vk} M^0\labelto{\lambda} M\to 0
\end{equation}
where $M^{-1}$ is a finitely generated  free $\Z[\G]$-module
(a finitely generated free module over the ring $\Z[\G]$\hs),
and $M^0$ is a $\Z$-torsion-free $\G$-module of $\Z$-rank at most $\,\#\G\cdot\mg(M)$.
\end{lemma}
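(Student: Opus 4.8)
The plan is to build the resolution~\eqref{e:M-M0-M-1} by treating two elementary cases separately --- $M$ finite and $M$ a $\Z$-lattice --- and then splicing them along the torsion filtration of $M$. The obstruction to a naive argument is visible at once: a free presentation $\Z[\G]^{\mg(M)}\onto M$ exhibits a free $\Z[\G]$-module as a \emph{quotient}, whereas the lemma wants the free module to sit inside $M^0$ as a \emph{sub}module, so some form of dualization is unavoidable, and it is the dualization that is responsible for the factor $\#\G$ in the rank bound.

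First I would dispose of the case where $M$ is finite. Here I would pass to the Pontryagin dual $M^{\circ}=\Hom_{\Z}(M,\Q/\Z)$, a finite $\G$-module (with the contragredient action) satisfying $\mg(M^{\circ})=\mg(M)=:r$ and $(M^{\circ})^{\circ}\cong M$, choose a $\G$-equivariant surjection $\pi\colon\Z[\G]^{r}\onto M^{\circ}$ coming from an $r$-element generating set of $M^{\circ}$, and note that $N:=\ker\pi$ is a sublattice of $\Z[\G]^{r}$ of full $\Z$-rank $r\cdot\#\G$ (full rank because $M^{\circ}$ is finite). Applying $\Hom_{\Z}(-,\Z)$ to $0\to N\to\Z[\G]^{r}\to M^{\circ}\to 0$ and using $\Hom_{\Z}(M^{\circ},\Z)=0$, $\Hom_{\Z}(\Z[\G]^{r},\Z)\cong\Z[\G]^{r}$, $\Ext^{1}_{\Z}(M^{\circ},\Z)\cong(M^{\circ})^{\circ}\cong M$ and $\Ext^{1}_{\Z}(\Z[\G]^{r},\Z)=0$, one obtains the short exact sequence $0\to\Z[\G]^{r}\to\Hom_{\Z}(N,\Z)\to M\to 0$ in which $\Hom_{\Z}(N,\Z)$ is a $\G$-lattice of $\Z$-rank $r\cdot\#\G=\#\G\cdot\mg(M)$. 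This is the required resolution when $M$ is finite (with equality in the rank bound), and it is the case where the Pontryagin dual and the factor $\#\G$ genuinely enter.

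Next I would handle general $M$ by splicing. Let $M_{\tor}\subseteq M$ be the torsion submodule and $\overline{M}=M/M_{\tor}$ the associated $\G$-lattice. By the finite case there is a resolution $0\to F\to L\labelto{\varepsilon}M_{\tor}\to 0$ with $F=\Z[\G]^{\mg(M_{\tor})}$ free and $L$ a $\G$-lattice of $\Z$-rank $\#\G\cdot\mg(M_{\tor})$. The key point is to lift the extension class of $M$, viewed as an element of $\Ext^{1}_{\Z[\G]}(\overline{M},M_{\tor})$, back along $\varepsilon_{*}\colon\Ext^{1}_{\Z[\G]}(\overline{M},L)\to\Ext^{1}_{\Z[\G]}(\overline{M},M_{\tor})$; the obstruction to such a lift lies in $\Ext^{2}_{\Z[\G]}(\overline{M},F)$. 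Since $F$ is free and $\Z[\G]\cong\Hom_{\Z}(\Z[\G],\Z)$ is coinduced from the trivial subgroup, Shapiro's lemma identifies $\Ext^{i}_{\Z[\G]}(\overline{M},\Z[\G])$ with $\Ext^{i}_{\Z}(\overline{M},\Z)$, and the latter vanishes for $i\ge 1$ because $\overline{M}$ is $\Z$-free; hence the obstruction vanishes and a lift exists. Unwinding the lift gives a short exact sequence $0\to L\to M^{0}\to\overline{M}\to 0$ together with a morphism of extensions down to $0\to M_{\tor}\to M\to\overline{M}\to 0$ which is the identity on $\overline{M}$ and $\varepsilon$ on $L$; a snake-lemma chase then shows the induced map $\lambda\colon M^{0}\to M$ is surjective with kernel isomorphic to $F$, so $0\to F\labelto{\vk}M^{0}\labelto{\lambda}M\to 0$ is a resolution of the required shape, with $M^{0}$ a $\G$-lattice because it is an extension of the lattice $\overline{M}$ by the lattice $L$. (When $M$ is finite this recovers the finite case with $\overline{M}=0$, and when $M$ is itself a lattice one simply takes $M^{-1}=0$ and $M^{0}=M$.)

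Finally the rank count: $\rank_{\Z}M^{0}=\rank_{\Z}L+\rank_{\Z}\overline{M}=\#\G\cdot\mg(M_{\tor})+\rank_{\Z}M$, and the structure theorem for finitely generated abelian groups gives the identity $\mg(M_{\tor})+\rank_{\Z}M=\mg(M)$; combined with $\rank_{\Z}M\le\#\G\cdot\rank_{\Z}M$ this yields $\rank_{\Z}M^{0}\le\#\G\cdot\mg(M)$, as required. I expect the main obstacle to be conceptual rather than computational: one must recognize that the free module has to be produced as a submodule and therefore dualize (which is what forces the appearance of the Pontryagin dual, the full-rank sublattice $N$, and the factor $\#\G$), and one needs the Shapiro/$\Ext^{2}$-vanishing input to make the splicing in the third step go through; the remaining steps are routine homological bookkeeping.
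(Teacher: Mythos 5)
The paper does not actually prove this lemma --- it defers entirely to the proof of Lemma 3.2 in [MSh] --- so there is no in-paper argument to compare against; judged on its own, your proof is correct and complete. Your opening diagnosis is the right one: the free module must occur as a \emph{sub}module, so the naive presentation $\Z[\G]^{\mg(M)}\onto M$ has to be dualized at some point, and that dualization is what produces the factor $\#\G$. The finite case checks out: $\mg(M^{\circ})=\mg(M)$ because $M^{\circ}\cong M$ as abelian groups, $N$ has full rank $r\cdot\#\G$ because the quotient is finite, $\Hom_{\Z}(\Z[\G],\Z)\cong\Z[\G]$ as $\G$-modules, and the identification $\Ext^{1}_{\Z}(M^{\circ},\Z)\cong(M^{\circ})^{\circ}\cong M$ is $\G$-equivariant because the two contragredient twists cancel. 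The splicing step is also sound: the obstruction to lifting the class of $0\to M_{\tor}\to M\to\overline{M}\to 0$ along $\varepsilon_{*}$ lies in $\Ext^{2}_{\Z[\G]}(\overline{M},F)$, which vanishes by the Eckmann--Shapiro identification $\Ext^{i}_{\Z[\G]}(\overline{M},\Z[\G])\cong\Ext^{i}_{\Z}(\overline{M},\Z)$ (a projective $\Z[\G]$-resolution of $\overline{M}$ restricts to a projective $\Z$-resolution, and $\Z[\G]$ is coinduced) together with the $\Z$-freeness of $\overline{M}$; the pushout description of $\varepsilon_{*}$ then furnishes the morphism of extensions, and the snake lemma identifies $\ker\lambda$ with $F$. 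The closing rank count rests on the identity $\mg(M_{\tor})+\rank_{\Z}M=\mg(M)$, which holds by the invariant-factor decomposition. The only judgment call is the division into a finite case and a splicing step rather than a single uniform dualization; this costs nothing in length, and every ingredient you invoke is standard, so the argument can stand as a proof of the lemma independent of the citation.
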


Let $G$ be a reductive group over a global field $K$.
Set $M=\pi_1^\alg(G)$, and let $\Theta_K(M)$ (or just $\Theta_K$)
denote the image of $\G(K^s/K)$ in $\Aut M$.
Write $\G=\Theta_K$, $\vt=\#\Theta_K$.
Consider a short exact sequence of $\G$-modules \eqref{e:M-M0-M-1},
where $M^{-1}$ and $ M^0$ are finitely generated   $\Z$-torsion-free $\G$-modules,
$M^{-1}$ is $\Z[\G]$-free, and the $\Z$-rank of $M^0$ is at most $\vt\cdot\mg(M)$.
Write $T^i$ for the $K$-torus with $\X_*(T^i)=M^i$ for $i=-1,0$.
Then $T^{-1}$ is a {\em quasi-trivial} $K$-torus
(which means that $\X_*(T^{-1})$ admits a $\G(K^s/K)$-stable basis),
and $\dim T^0\le \vt\cdot\mg(M)$.
Using the notations of Section~\ref{s:abelian},
we have
\begin{gather*}
\coker[\X_*(T^\ssc)\to \X_*(T)]=M=\coker[M^{-1}\to M^0],\\
\ker[\X_*(T^\ssc)\to \X_*(T)]=0\quad\ \text{and}\quad\ \ker[M^{-1}\to M^0]=0.
\end{gather*}
Therefore, we have a canonical isomorphism
\begin{align*}
\Ho^1_\ab(K,G)\coloneqq&\HH^1\big(\G_K, (\X_*(T^\ssc)\to \X_*(T)\hs)\otimes (K^s)^\times\big)\\
\cong&\HH^1\big(\G_K, (M^{-1}\to M^0)\otimes (K^s)^\times\big)
    =\HH^1(K,T^{-1}\to T^0);
    \end{align*}
see \cite[the text after Definition 2.6.2]{Borovoi-Memoir}.

Moreover, since the torus $T^{-1}$ is quasi-trivial, we have
\begin{align}
&\Hon(K',T^{-1})=1\ \text{for any extension $K'$ of $K$,} \label{e:qt-H1}\\
&\!\Sha^2(K,T^{-1})=1;\label{e:qt-Sha2}
\end{align}
see Sansuc \cite[Lemma 1.9]{Sansuc}.
\footnote{\,Sansuc established formula~\eqref{e:qt-Sha2}
under the assumption that $K$ is a number field,
but his proof remains valid in
the case of a global function field.}

Consider the short exact sequence of complexes
\[ 1\to\,(1\to T^0)\,\to\, (T^{-1}\to T^0)\,\to\,(T^{-1}\to 1)\,\to 1\]
and the induced hypercohomology exact sequence
\begin{equation}\label{e:T-1-T0}
1=\Hon(K,T^{-1})\labelto{\vk_*} \Hon(K,T^0)\labelto{\lambda_*} \HH^1(K,T^{-1}\to T^0)\labelto\delta \Ho^2(K,T^{-1})
\end{equation}
where $\Hon(K,T^{-1})=1$ by \eqref{e:qt-H1}.

\begin{lemma}\label{l:xi1ab-xi0}
Let $n\ge 1$.
With the above  assumptions and notations, let $$\xi\in \Sha^1(K,T^{-1}\to T^0)$$
be such that $\xi^n=1$.
Then $\xi=\lambda_*(\xi_0)$ for a unique element $\xi_0\in \Hon(K,T^0)$.
Moreover, $\xi_0^n=1$.
\end{lemma}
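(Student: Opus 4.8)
The plan is to run a short diagram chase in the hypercohomology exact sequence~\eqref{e:T-1-T0}, using that $T^{-1}$ is quasi-trivial, so that both $\Hon(K',T^{-1})$ vanishes for all $K'/K$ by~\eqref{e:qt-H1} and $\Sha^2(K,T^{-1})=1$ by~\eqref{e:qt-Sha2}.

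First I would note that, since $\Hon(K,T^{-1})=1$, the exact sequence~\eqref{e:T-1-T0} shows that $\lambda_*\colon \Hon(K,T^0)\to \H^1(K,T^{-1}\to T^0)$ is injective; hence once we know that $\xi$ lies in its image, the element $\xi_0$ with $\lambda_*(\xi_0)=\xi$ is automatically unique. To show $\xi\in\im\lambda_*$, by exactness of~\eqref{e:T-1-T0} it suffices to prove that $\delta(\xi)=1$ in $\Ho^2(K,T^{-1})$. For this I would invoke the functoriality of the connecting homomorphism $\delta$ with respect to the localization maps $\loc_v$ (which holds because $\delta$ arises from a short exact sequence of complexes of $\G_K$-modules and restriction commutes with connecting maps): since $\xi\in\Sha^1(K,T^{-1}\to T^0)$ we have $\loc_v(\xi)=1$ for every place $v$ of $K$, and therefore $\loc_v\big(\delta(\xi)\big)=\delta\big(\loc_v(\xi)\big)=1$ for every $v$, i.e.\ $\delta(\xi)\in\Sha^2(K,T^{-1})$. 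By~\eqref{e:qt-Sha2} this group is trivial, so $\delta(\xi)=1$ and $\xi=\lambda_*(\xi_0)$ for a unique $\xi_0\in\Hon(K,T^0)$.

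It remains to check that $\xi_0^n=1$. Here I would use that $\lambda_*$ is a homomorphism of abelian groups: from $\lambda_*(\xi_0^n)=\lambda_*(\xi_0)^n=\xi^n=1$ and the injectivity of $\lambda_*$ already established, we conclude $\xi_0^n=1$.

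There is no real obstacle: the one nontrivial ingredient is the vanishing $\Sha^2(K,T^{-1})=1$ for the quasi-trivial torus $T^{-1}$, which is Sansuc's lemma recorded in~\eqref{e:qt-Sha2} and is valid over any global field; the rest is exactness in~\eqref{e:T-1-T0} together with the facts that all maps there are group homomorphisms and that $\delta$ is functorial in $K$.
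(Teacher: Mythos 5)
Your proof is correct and follows essentially the same route as the paper: use $\Sha^2(K,T^{-1})=1$ to kill $\delta(\xi)$ and lift $\xi$ through $\lambda_*$, use $\Hon(K,T^{-1})=1$ to get injectivity of $\lambda_*$ (hence uniqueness), and then conclude $\xi_0^n=1$ from $\lambda_*(\xi_0^n)=\xi^n=1$. The only difference is that you spell out the compatibility of $\delta$ with the localization maps, which the paper leaves implicit.
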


\begin{proof}
Since $\xi\in \Sha^1(K,T^{-1}\to T^0)$, we have $\delta(\xi)\in \Sha^2(K,T^{-1})$,
and by \eqref{e:qt-Sha2} we have $\delta(\xi)=1$. By \eqref{e:T-1-T0},
$\xi=\lambda_*(\xi_0)$ for a unique element $\xi_0\in \Hon(K,T^0)$.
Using~\eqref{e:T-1-T0} one more time,
we see that $\xi^n=1$ implies $\xi_0^n\in\ker\lambda_*=\{1\}$.
\end{proof}

We are now ready to finish the proof of Theorem \ref{t:ind-per-d}.
Let $K$ be a global  field.
Let  $\xi\in \Hon(K,G)$ and let $n>0$ be such that $\xi^{\di n}=1$.
Our goal is to show that $\Res_{L/K}(\xi)=1$
for some finite  extension $L/K$ of degree $n^r$ for suitable $r$.
This will prove that
$\ind(\xi)$ divides $n^r$.

By Theorem~\ref{t:Sha} there exists
a cyclic extension $L_1/K$  of degree $n^{r_1}$ for some
integer $r_1 \geqslant 0$ such that $\xi_1\coloneqq\Res_{L_1/K}(\xi)$ lies in $\Sha^1(L_1,G)$,
and by Corollary~\ref{c:Sha} we can take $r_1=\lfloor \log_2\vt\rfloor +1$.
Since the power map $\di n$ is functorial,  we have
\[ \xi_1^{\di n}=(\Res_{L_1/K}\hs\xi)^{\di n}
    =\Res_{L_1/K}(\xi^{\di n})=\Res_{L_1/K}(1)=1.\]

Set $\xi_{1,\ab}=\ab(\xi_1)\in \Ho^1_\ab(L_1,G)$.
When $K$ is a number field, by \cite[Theorem 5.12]{Borovoi-Memoir},
for any finite  extension $L/L_1$  the natural map
\[
\ab\hm_\Sha\colon \Sha^1(L,G)\ \lra\, \Sha^1_\ab(L,G)\coloneqq
\ker\Big[\Ho^1_\ab(L,G)\to \prod\Ho^1_\ab(L_w,G)\Big]
\]
is bijective.
When $K$ is a global function field, the map $\ab\hm_\Sha$ is clearly bijective,
because then the corresponding abelianization maps
\[\ab\colon \Hon(K,G)\to \Ho^1_\ab(K,G)\quad\ \text{and}
     \quad\ \ab_v\colon \Hon(K_v,G)\to \Ho^1_\ab(K_v,G)\]
are bijective.
It follows that in both cases a finite extension $L/L_1$ that splits $\xi_{1,\ab}$\hs, also splits $\xi_1$.
By Proposition \ref{p:ab-n}(iv)  we have
\[(\xi_{1,\ab})^n\coloneqq(\ab\,\xi_1)^n=\ab(\xi_1^{\di n})=\ab(1)=1.\]

Write $\G_1=\Theta_{L_1}(M)$.
We choose a resolution \eqref{e:M-M0-M-1}
such that $M^{-1}$ is a finitely generated  $\Z[\G_1]$-free module.
Consider the hypercohomology exact sequence \eqref{e:T-1-T0}
where
\[ \HH^1(L_1,T^{-1}\to T^0)=\Ho^1_\ab(L_1,G).\]
Here $\dim T^0\le \vt\cdot\mg(M)$.
By Lemma \ref{l:xi1ab-xi0}, we have $\xi_{1,\ab}=\lambda_*(\xi_0)$
for some $\xi_0\in\Hon(L_1,T^0)$ such that $\xi_0^n=1$.

By Proposition~\ref{lem.main1b-1} there exists a finite extension $L/L_1$
of degree dividing $n^{\dim T^0}$ such that $L/L_1$ splits $\xi_0$.
Hence, $L/L_1$ splits $\xi_{1,\ab}$ and $\xi_1$ as well.
We obtain a tower of finite extensions
\begin{equation*}
\begin{aligned}
 \xymatrix@R=7mm
{   L \ar@{-}[d]^{\text{degree divides $n^{r_0}$}}  \\
                 L_1 \ar@{-}[d]^{\text{degree  $n^{r_1}$}}  \\
                     K }
\end{aligned}
\end{equation*}
where
\[ r_1=\lfloor \log_2\vt\rfloor+1 \quad \text{and} \quad r_0=\dim T^0\le\vt\cdot \mg(M).\]
In summary, we have constructed a field extension $L/K$ of degree dividing
$n^d=n^{r_1 + r_0}$ that splits $\xi$, which completes the proof of Theorem \ref{t:ind-per-d}.
\hfill\qed

\begin{remark} \label{rem.upper-bound-on-d}
Our proof of Theorem \ref{t:ind-per-d} shows that
we may choose the exponent $d$ satisfying
\[d\le \mg(M)\cdot\vt+\lfloor\log_2 \vt\rfloor+1\]
where $\vt =\#\Theta_K$ and $\mg(M)$ is the minimal number of generators
of the finitely generated abelian group $M$.
\end{remark}

\newcommand{\ZZ}{\Z}
\renewcommand{\CC}{\C}
\renewcommand{\QQ}{\Q}
\newcommand{\simlgr}{\buildrel\sim\over\longrightarrow}
\newcommand{\simla}{\buildrel\sim\over\longleftarrow}
\renewcommand{\Res}{{\rm Res}}
\newcommand{\Rost}{{\rm Rost}}
\newcommand{\irr}{{\rm irr}}

\appendix

\section{Examples: the index can be greater than the period}
\label{sect.example-local}

\centerline{\em by Mikhail Borovoi}
\bigskip

In this appendix we prove Theorem~\ref{thm.main3}.

\subsection{Separable extensions of even degree of non-archimedean local fields}
\label{ssn:quadratic}
In this subsection,  we show that
any finite separable extension of even degree of non-archimedean local fields
of residue characteristic not 2 contains a quadratic subextension.

Let  $K$ be a non-archimedean local field. We denote by $O_K$ the ring of integers of $K$,
by $\pp_K$ the maximal ideal of $O_K$,
and by $k_K=O_K/\pp_K$ the residue field.
The characteristic of the finite field $k_K$ is called the residue characteristic of $K$.

Consider the surjective homomorphism
\[O_K^\times\to k_K^\times,\quad\ a\mapsto a+\pp_K\]
with kernel $U^{(1)}=1+\pp_K$.
It admits a canonical splitting
\[\Teich_K\colon k_K^\times\isoto \mu_{q-1}(K)\into  O_K^\times\]
called the Teichm\"uller character; see \cite[Proof of Proposition II.5.3]{Neukirch}.
Here $q$ denotes the order of the finite field $k_K$\hs, and
$\mu_{q-1}(K)$ denotes the group of roots of unity of degree $q-1$ in $K$.
When $\alpha\in k_K^\times$, we shall write $\tilde\alpha=\Teich_K(\alpha)$
and say that $\tilde\alpha\in O_K^\times$ is
the {\em Teichm\"uller representative} of $\alpha$.
We obtain a canonical isomorphism
\[k_K^\times \times U^{(1)}\isoto O_K^\times,\quad\ (\alpha,u)\mapsto \tilde\alpha\hs u.\]

Choose a uniformizer of $K$, that is, a generator $\pi_K$ of the maximal ideal $\pp_K$ of $O_K$.
We have a non-canonical isomorphism
\begin{equation}\label{e:iso-Neu}
\Z\times  k_K^\times\times U^{(1)}\isoto K^\times,\quad (n,\alpha,u)\mapsto \pi_K^n\hs\tilde\alpha\hs u
\ \ \text{for}\ \ n\in\Z,\,\alpha\in k_K^\times,\,u\in U^{(1)};
\end{equation}
see \cite[Proposition II.5.3]{Neukirch}.

\begin{cons} \label{const:quadratic}
Let $K$ be a non-archimedean local field of residue characteristic $p$.
In this construction we assume that $p>2$;
then any element of $U^{(1)}\coloneqq 1+\pp_K$ is a square.

Write $\#k_K=q=p^r$. Then $q$ is odd, and therefore $k_K^\times$ is a  cyclic group of {\em even} order $q-1$.
It follows that there exists a non-square element $\alpha\in k_K^\times$.
Write $\ve=\tilde\alpha\in O_K^\times$.

Consider the following three non-square elements of $K^\times$:
\[a_1=\ve,\quad a_2=\pi_K, \quad a_3=\ve\pi_K\hs.\]
It follows from \eqref{e:iso-Neu} that $K^\times/(K^\times)^2\simeq \Z/2\Z\times\Z/2\Z$,
namely,
\[K^\times/(K^\times)^2=\big\{1,[a_1],[a_2],[a_3]\big\}\]
where $[a_j]$ denotes the class of $a_j\in K^\times$ in $K^\times/(K^\times)^2$.
Hence any separable quadratic extension of $K$ is isomorphic to $K(\sqrt{a_j}\hs)$ for some $j=1,2,3$.
\end{cons}

\begin{prop}[Spice \cite{LSpice}]
\label{p:LSpice}
Let $L/K$ be a finite separable  extension of {\emm even} degree of non-archimedean local fields
with residue characteristic $p>2$.
Then $L$ contains a quadratic subextension $F$ of $K$.
\end{prop}

Let $L/K$ be a finite separable extension of non-archimedean local fields.
Recall that $L/K$ is called {\em unramified} if $[L:K]=[k_L:k_K]$
where $k_L$ and $k_K$ are the corresponding residue class fields.
Then any uniformizer $\pi_K$ of $K$ is a uniformizer of $L$.
In general (not assuming that $L/K$ is unramified) we write $K^\ur_L$
for the unique maximal unramified subextension of $K$ in $L$;
for the proof of uniqueness see~\cite[Chapter I, Section 7, Theorem 2]{CF}.

If $L/K$ is an unramified  finite separable extension of non-archimedean local fields,
then $k_K$ naturally embeds into $k_L$,
and for any element $\alpha\in k_K^\times\subset k_L^\times$ we have
\[ \Teich_L(\alpha)=\Teich_K(\alpha)\in O_K^\times\subset O_L^\times\hs\]
where $\Teich_K\colon k_K^\times\to O_K^\times$ and $\Teich_L\colon k_L^\times\to O_L^\times$
are the corresponding Teichm\"uller characters;
see the beginning of this subsection.
We denote $\tilde\alpha=\Teich_L(\alpha)=\Teich_K(\alpha)\in O_K^\times$.

Recall that a finite separable extension $L/K$  of non-archimedean local fields
of residue characteristic  $p$
is called {\em (at most) tamely ramified} (or {\em tame}) if
the degree $[L:K^\ur_L]$ is prime to $p$.
In general (not assuming that $L/K$ is tame)
we write $K^\tame$ for the unique maximal tame subextension of $K$ in $L$;
for the proof of uniqueness see~\cite[Chapter I, Section 8, Theorem 1(i)]{CF}.
Then $[L:K^\tame]=p^r$ for some $r\ge 0$ ({\em loc.~cit.}).

\begin{proof}[Proof of Proposition \ref{p:LSpice}]
We consider the maximal tame subextension $L'=K^\tame$ of $K$ in $L$.
Since $p>2$, the degree $[L:L']=p^r$ is odd, whence $[L':K]$ is even.
After replacing $L$ by $L'$, we may and shall assume that our extension $L/K$ is tame.
We have a tower of extensions
\[ K\subseteq E\subseteq L\]
where $E\coloneqq K^\ur_L$ is the maximal unramified extension of $K$ in $L$.
Then $k_L=k_E$.
If  $f \coloneqq[E:K]$ is even, then we are done, because then $E/K$ is a cyclic extension of even degree.
We may thus assume that $[E:K]$ is odd and thus $[L:E]=2d$ for some
positive integer $d$.
Let $\pi_L$ be a uniformizer of $L$ and $\pi_K$ be a uniformizer of $K$.
Then $\pi_L^{2d}=a\pi_K$ for some $a\in O_L^\times$.
Let $\alpha$ denote the image of $a$ in $k_L^\times=k_E^\times$,
and let $\tilde\alpha\in O_E^\times$
denote the Teichm\"uller representative of $\alpha$; see the beginning of this subsection.
Then $\tilde\alpha^{-1}a\in U^{(1)}_L\coloneqq 1+\pp_L$.
Since $p$ is odd, we know that $\tilde\alpha^{-1}a=b^2$ for some  $b\in U^{(1)}_L$.
Thus $\pi_L^{2d}=\tilde\alpha\hs b^2\pi_K$.

\begin{lem}\label{l:Abrashkin}
The natural embedding $k_K^\times\into  k_E^\times$ induces  an isomorphism
\begin{equation}\label{e:K-E}
k_K^\times/(k_K^\times)^2\to k_E^\times/(k_E^\times)^2.
\end{equation}
\end{lem}

\begin{proof}[Proof of Lemma~\ref{l:Abrashkin}]
If the homomorphism \eqref{e:K-E} is not injective,
then there is a non-square element $\beta\in k_K$ that becomes a square in $k_E$,
and therefore the degree $[k_E:k_K]=f$ is even, a contradiction.
Thus \eqref{e:K-E} is injective.
Since each of the  groups $k_K^\times/(k_K^\times)^2$ and
$k_E^\times/(k_E^\times)^2$ is of order 2, we conclude that \eqref{e:K-E} is an isomorphism.
\end{proof}

We are now ready to complete the proof
of Proposition~\ref{p:LSpice}.
Lemma~\ref{l:Abrashkin} tells us
that $\alpha\in  k_E^\times$ can be written as $\alpha=\alpha_1^2\beta$
for some $\alpha_1\in k_E^\times$ and $\beta\in k_K^\times$,
whence
$$\pi_L^{2d}=\tilde\alpha_1^2\tilde\beta\hs b^2\pi_K=(\tilde\alpha_1 b)^2\tilde\beta\hs\pi_K$$
where $\tilde\alpha_1 \in O_E^\times $ is the Teichm\"uller representative of $\alpha_1$,
and where  $\tilde\beta\in O_K^\times$ is the Teich\-m\"uller representative of $\beta$.
Dividing both sides by $(\tilde{\alpha}_1 b)^2$, we obtain
\[(\pi_L^d b^{-1}\tilde\alpha_1^{-1})^2=\tilde \beta\pi_K\in K^\times.\]
The uniformizer $\tilde \beta\pi_K$ of $K$ is clearly not a square in $K$.
Now $K(\pi_L^d b^{-1}\tilde\alpha_1^{-1})$
is a desired quadratic subextension of $K$ in $L$.
This completes the proof of the proposition.
\end{proof}

\subsection{The index can be greater than the period: the local case}
\label{ssn:local}
We begin by restating  Theorem~\ref{thm.main3}(a) for the reader’s convenience.

\begin{thm}[Theorem~\ref{thm.main3}(a)]
\label{thm.main3-a}
Let $K$ be a non-archimedean local field of residue characteristic not $2$
containing $\zeta\coloneqq \sqrt{-1}$.
Then there exist a $6$-dimensional $K$-torus $T$
and a cohomology class $\xi\in \Hon(K,T)$ such that
$ \per(\xi)=2$, but $4\hs\hs|\ind(\xi)$.
\end{thm}

We shall break up the proof into a series of constructions and lemmas.

\begin{cons}[due to Tyler Lawson \cite{Lawson}]
\label{Step1}
Let $\Gamma=\Z/4\Z$. Consider the $\Gamma$-module
$M=\Z[\ii]$ where $\ii=\sqrt{-1}$   and the generator $\gamma=1+4\Z\in\Gamma$
acts on $M$ by multiplication by $\ii$.
Consider the norm map
$$N_\G\colon M_\G\to M^\G$$
where $M^\G$ is the group of invariants of $\G$ in $M$, and $M_\G$ is the group of coinvariants;
see~\cite[Chapter IV, Section 6]{CF}.
In our case, the norm map $N_\G$ is induced by the map
\[1+\gamma+\gamma^2+\gamma^3\colon M\to M.\]

We shall now consider the Tate cohomology groups $\Ho^n(\G,M)$
of the finite group $\G$; see \cite[Chapter IV, Section 6]{CF}.
The groups $\Ho^n(\G,M)$ are defined for all $n\in \Z$.
The case where $n = -1$ will be of particular interest to us;
recall that by definition $\Ho^{-1}(\G,M)=\ker N_\G$.
Since
\[1+\gamma+\gamma^2+\gamma^3=1+\ii+(-1)+(-\ii)=0,\]
we have
\begin{equation} \label{e.M_Gamma}
 \Ho^{-1}(\G,M)=\ker N_\G=M_\G=\Z[\ii]/(1-\gamma)\hs\Z[\ii]=\Z[\ii]/(1-\ii)\cong\Z/2\Z.
 \end{equation}

Let $\Delta=\langle \gamma^2\rangle\subset\Gamma$ denote the subgroup of index 2.
Similarly, then the norm map
$$N_\Delta\colon M_\Delta\to M^\Delta$$
is induced by multiplication by $1+\gamma^2=1+(-1)=0$, and therefore
\[\Ho^{-1}(\Delta,M)\coloneqq\ker N_\Delta=M_\Delta=\Z[\ii]/(1-\gamma^2)\hs\Z[\ii]=\Z[\ii]/(2).\]
The restriction  homomorphism
\[\Res_{\Gamma/\Delta}\colon\
\Z[\ii]/(1-\ii)= \Ho^{-1}(\Gamma,M)=M_\G\ \lra\  M_\Delta=\Ho^{-1}(\Delta,M) =\Z[\ii]/(2)\]
is given by the transfer map
\[\Tsf_{\G/\Delta}\colon\, [z]\mapsto [(1+\gamma) z]=[(1+\ii)z],\quad z\in\Z[\ii],\ \,  \gamma=\ii\]
of Section \ref{s:transfer}.
By Lemma \ref{l:(1+i)} below, this transfer map is injective.
Therefore, for $x\neq 0\in \Ho^{-1}(\Gamma,M)$ we have
\[2x=0,\quad\ \text{but}\quad\ \Res_{\Gamma/\Delta}(x)\neq 0.\]
\end{cons}

\begin{lem}\label{l:(1+i)}
The multiplication by $1+\ii$
\[1+\ii\colon \ \Z[\ii]/(1-\ii)\to \Z[\ii]/(2)\]
is injective.
\end{lem}

\begin{proof}
The group $\Z[\ii]/(1-\ii)\cong\Z/2\Z$ is of order 2 with nonzero element $[1]$.
The image of $[1]$ in $\Z[\ii]/(2)$ is
\[[1\cdot(1+\ii)]=[1+\ii]\neq 0 \in \Z[\ii]/(2),\]
as desired.
\end{proof}

\begin{cons}\label{Step2-local}
Let $K$ be a non-archimedean local field as in Theorem \ref{thm.main3-a}.
Let $K'/K$ be a separable quadratic extension.
We may write $K'=K(\sqrt{a})$ for some $a\in K^\times\smallsetminus K^{\times\hs2}$.
We set $F=K(b)$ with $b=\sqrt[4]{a}$.
Since $K\ni\zeta\coloneqq\sqrt{-1}$, the extension $F/K$ is a Kummer extension;
see~\cite[Chapter III, Section 2]{CF}.
In particular, it is a cyclic Galois extension
with Galois group $\G\cong \Z/4\Z$, where the generator $\gamma=1+4\Z\in\G$ sends $b$ to $\zeta b$.
Then $F$ contains $K'=K(b^2)$, and the subfield $K'$ corresponds
to the subgroup $\Delta=2\Z/4\Z$ of $\G$.

Let $T$ be a two-dimensional $K$-torus with cocharacter group $M=\Z[\ii]$
and with effective Galois group (acting on  $M$) $\Gal(F/K)=\Z/4\Z$
such that $\gamma=1+4\Z\in\Gamma$ acts on $M$ by multiplication by $\ii$.
Write $M^\vee=\Hom(M,\Z)$;
then $T=D(M^\vee)$ with the notation of Milne \cite[Theorem 12.23]{Milne},
and we have $T(K^s)\cong M\otimes K^{s\,\times}$.
Moreover,  we have
\[\Hon(K,T)\cong M_\Gt\hs;\]
see \cite[Theorem 4.2.7(1)]{BK}.
On the other hand, $M_\G \simeq \Z[\ii]/(1 - \ii) \simeq \Z/ 2 \Z$ is a group of order 2;
see \eqref{e.M_Gamma}. Thus $M_\Gt=M_\G$ and $\Hon(K,T) \cong \Z/ 2 \Z$.

Let $\xi$ denote the nontrivial element of $\Hon(K,T)$. Then $\xi^2=1$.
\end{cons}

\begin{lem}\label{l:Res-K'/K}
$\Res_{K'/K}(\xi)\neq 1$.
\end{lem}

\begin{proof}
 By \cite[Proposition 4.5.1]{BK} the restriction map
 $\Res_{K'/K}\colon \Hon(K,T)\to\Hon(K',T)$ fits into
 the commutative diagram below:
\begin{equation}\label{diad-5.5.1}
\begin{aligned}
\xymatrix{
	M_\G\ar[r]^-\sim \ar[d]_-{\Tsf_{\G/\Delta}}&\Hon(K, T)\ar[d]^{\Res_{K'/K}}\\
	M_{\Delta}\ar[r]^-\sim &\Hon(K',T)
}
\end{aligned}
\end{equation}
Here the transfer map $\Tsf_{\G/\Delta}$ is as in Section \ref{s:transfer}.
Denote the class of an element $m \in M = \Z[\ii]$ in $M_\G \simeq \Z/ 2\Z$ by $[m]$. Then
\[\Tsf_{\G/\Delta}[m]=[\hs(1+\ii\hs )m]\in M_\Delta\cong\Z[\ii]/(2).\]
We see that $\Tsf_{\G/\Delta}$ is multiplication by $1+\ii$, and by Lemma \ref{l:(1+i)}
the map $\Tsf_{\G/\Delta}$ is injective.
Since diagram \eqref{diad-5.5.1} commutes,
the vertical arrow $\Res_{K'/K}$
is injective as well. In other words,
for $1 \neq \xi\in  \Hon(K,T)$ we have
$ \Res_{K'/K}(\xi)\neq 1$.
\end{proof}

\begin{cons}\label{Step3-local}
Let  $K$ be as in Theorem \ref{thm.main3-a}.
Let $a_1,a_2,a_3\in K^\times$ be as in Construction \ref{const:quadratic}.
Then any quadratic extension of $K$ is isomorphic
to $K\big(\sqrt{a_j}\hs\big)$ for some $j=1,2,3$.

For each $j\in\{1,2,3\}$,
set $F_j=K\big(\sqrt[4]{a_j}\big)$ and $K_j'=K\big(\sqrt{a}\big)$.
Then $F_j/K$ is a Kummer extension with Galois group $\G=\Z/4\Z$
where $\gamma=1+4\Z\in \G$ sends
$\sqrt[4]{a}$
to $\zeta \sqrt[4]{a}$ with $\zeta=\sqrt{-1}$.
Moreover, $K_j'/K$ is a quadratic extension
corresponding to the subgroup $\Delta=2\Z/4\Z\subset\G$.

Let $T_j$ be the two-dimensional $K$-torus splitting over $F_j$
with cocharacter group $M=\Z[\ii]$
on which the generator $1+4\Z\in\Gamma=\G(F/K)$ acts by multiplication by $\ii$.
Let $\xi_j \in \Hon(K,T_j) \simeq \Z/ 2\Z$ be the nontrivial element.
Then $\xi_j^2=1$.
Now we set
\[T=T_1\times T_2\times T_3\hs,\quad\  \text{and} \quad \xi=(\hs\xi_1,\hs\xi_2,\hs\xi_3\hs)\in \Hon(K,T).\]
Clearly $\xi \neq 1$ but $\xi^2=1$. In other words,
$\per(\xi) = 2$.
\end{cons}

To finish the proof of Theorem \ref{thm.main3-a}, it remains to establish the following.

\begin{lem} \label{lem.4|index}
The index $\ind(\xi)$ is divisible by $4$.
\end{lem}

\begin{proof}
For the sake of contradiction,
assume the contrary: $\xi$ can be split by a field extension $L/K$ whose degree $[L:K]$
is finite but is not divisible by $4$.
Note that $[L:K]$ is divisible by $\ind(\xi)$, and Theorem~\ref{t:divides}
tells us that
$\ind(\xi)$ is divisible by $\per(\xi) = 2$. Hence, $[L:K] = 2d$, where $d$ is odd.

Since the residue characteristic of $K$ is not $2$,
by Proposition \ref{p:LSpice} there is an intermediate quadratic subextension
$K \subset K' \subset L$ such that $[K':K] = 2$ and
$[L: K'] = d$ is odd.
Then $K'= K'_j$ for some $j = 1, 2, 3$.
Here $K'_j := K(\sqrt{a_j})$, as in Construction~\ref{Step3-local}.
Let $\xi'_j  \coloneqq\Res_{K'/K}(\xi_j) \in  \Hon(K', T_j)$.

Now consider two extensions of $K'$:
$L/K'$ of degree $d$ and $F_j/K'$ of degree $2$. Here $F_j = K(\sqrt[4]{a_j})$, as in Construction~\ref{Step3-local}.
By our assumption, $L/K$ splits $\xi$ and thus $L/K'$ splits $\xi'_j$.
On the other hand, by the definition of the torus $T_j$, the extension $F_j/K$ splits $T_j$.
Hence, by Hilbert's Theorem 90, $\Hon(F_j, T_j) = \Hon(F_j, \mathds G_m^2) = 1$.
It follows that $F_j/K'$ splits $\xi'_j$.

In summary, both $F_j/K'$ and $L/K'$ split $\xi'_j$. Since
\[ \gcd\big(\hs[F_j:K'],\hs [L:K']\hs\big) = \gcd(2, d) = 1, \]
by Corollary \ref{c:Sansuc} we have $\xi'_j = 1$ in $\Hon(K', T_j)$,
which contradicts Lemma~\ref{l:Res-K'/K}.
This completes the proofs of Lemma \ref{lem.4|index}
and of Theorem \ref{thm.main3-a}.
\end{proof}

\subsection{The index can be greater than the period: the global case}
\label{ssn:global}
We begin by restating Theorem~\ref{thm.main3}(b) for the reader's convenience.

\begin{thm}[Theorem~\ref{thm.main3}(b)]
\label{thm.main3-b}
Let $K$ be a global field of characteristic not $2$ containing $\zeta\coloneqq\sqrt{-1}$.
Then there exist a $6$-dimensional $K$-torus $T$
and a cohomology class $\xi\in \Hon(K,T)$ such that
$ \per(\xi)=2$, but $4\hs\hs|\ind(\xi)$.
\end{thm}

Since $K$ is not of characteristic 2, there is a non-archimedean completion $K_v$ of $K$
with residue characteristic different from 2. We fix $v$ for the remainder of this appendix. We shall break up the proof of Theorem~\ref{thm.main3-b} into a series of constructions and lemmas. The overall idea is to choose $T$ and
$\xi$ so that they localize at $v$ to the $6$-dimensional $K_v$-torus $T_v$ together with the class $\xi_v \in \Hon(K_v, G_v)$
from Construction~\ref{Step2-local}.

\begin{cons}\label{Step3}
Let $\alpha_v\in k_v^\times$ be a non-square invertible element of the residue field $k_v$ of $K_v$,
and write $\ve_v=\tilde\alpha_v\in O_v^\times$ for its Teichm\"uller representative.
Set $a_{v,1}=\ve_v$\hs, and consider two uniformizers $a_{v,2}$ and $a_{v,3}=\ve_v a_{v,2}$  of $K_v$.
Set $K'_{v,j}=K_v\big(\sqrt{a_{v,j}}\hs\big)$ for $j=1,2,3$.
Then any separable quadratic extension of $K_v$ is isomorphic
to $K'_{v,j}$ for some $j=1,2,3$; see Construction \ref{const:quadratic}.
Set $a_{v,0}=1\in K_v$.
We have
\[  K_v^\times=\bigcup_{j=0}^3 a_{v,j}\cdot (K_v^\times)^2\hs.\]
Since the subgroup $(K_v^\times)^2\subset K_v^\times$ is open,
by weak approximation, see \cite[Chapter II, Section 15, Theorem]{CF},
for $j=1,2,3,$ we may choose
$a_j\in K^\times\cap  \big(a_{v,j}\cdot (K_v^\times)^2\big).$
We obtain three quadratic extensions
$K'_j=K\big(\sqrt{a_j}\hs\big)$ of $K$ for $j=1,2,3$,
such that $K'_j\otimes_K K_v\simeq K'_{v,j}$\hs.

We choose $j\in\{1,2,3\}$ and write $a=a_j$.
We set $F=K\big(\sqrt[4]{a}\big)$ and $K'=K(\sqrt{a})$.
Then $F/K$ is a Kummer extension with Galois group $\G=\Z/4\Z$,
where $\gamma=1+4\Z\in \G$ sends $b\coloneqq\sqrt[4]{a}$ to $\zeta b$.
Moreover, $K'/K$ is a quadratic extension
corresponding to the subgroup $\Delta=2\Z/4\Z\subset\G$.
We have
\begin{equation}\label{e:F-K_v}
K'\otimes_K K_v\cong K'_{v'}\coloneqq K_v\big(\sqrt{a}\big)\quad\ \text{and}\quad F\otimes_K K_v\cong K_v\big(\sqrt[4]{a}\big).
\end{equation}

Let $T$ be the two-dimensional $K$-torus splitting over $F$
with cocharacter group $M=\Z[\ii]$
on which the generator $1+4\Z\in\Gamma=\Z/4\Z=\Gal(F/K)$ acts by multiplication by $\ii$.
Let $\xi_v\in \Hon(K_v,T)$ be the only non-unit element.
Then $\xi_v^2=1$.
\end{cons}

\begin{lem}\label{l:Kottwitz-1}
There exists an element $\xi\in\Hon(K,T)$ such that $\loc_v(\xi)=\xi_v$ and $\xi^2=1$.
\end{lem}

\begin{proof}
Consider the localization map
\[\loc\colon \Hon(K,T)\to \bigoplus_{w\in\V(K)} \Hon(K_w,T).\]
Since the effective Galois group $\G=\G(F/K)$ is cyclic,
we know that $\ker(\loc)=\Sha(K,T)=1$;
see \cite[Corollary 7.4.4(1)]{BK}.
Thus we can identify $\Hon(K,T)$ with its image $\im(\loc)$. We describe this image.
For any $w\in\V_K$ choose a place $\bw$ of $F$ over $w$.
Then there is a canonical Tate-Nakayama isomorphism
\[\lambda_w\colon \Hon(K_w,T)\isoto \Ho^{-1}(\G_{\bw}\hs, M)= M_\Gbwt\hs, \]
where $\G_\bw$ denotes the stabilizer of $\bw$ in $\G$;
see  Serre \cite[Theorem IX.8.14]{Serre-LF} or  Tate \cite[Theorem on page 717]{Tate}.
Consider the composite homomorphism
\[ \mu_w\colon \Hon(K_w,T)\labelto{\lambda_w} M_\Gbwt\to M_\Gt\hs,\]
where the homomorphism $M_\Gbwt\to M_\Gt$ is the natural projection.
Moreover, consider the map
\[\mu\colon \bigoplus_{w\in\V(K)} \Hon(K_w,T)\,\to\, M_\Gt\hs,\quad\ (\xi_w)\mapsto\sum_{w\in\V(K)}\mu_w(\xi_w).\]
Then
\begin{equation}\label{e:thm-Tate}
\im(\loc)=\ker(\mu);
\end{equation}
see Tate \cite[Theorem on page 717]{Tate}.

Let $v$ be the fixed place of $K$ chosen below the statement of Theorem \ref{thm.main3-b}.
Let $\G_v\subset\G$ denote a decomposition group of $v$  (defined up to conjugacy).
Since by \eqref{e:F-K_v} the tensor product $ F\otimes_K K_v$ is a field, we have $\G_v=\G$.
We saw in Construction \ref{Step2-local} that  $\Hon(K_{v},T)\cong\Z/2\Z$.
Moreover, for the nontrivial element $\xi_v\in \Hon(K_v,T)$  we have
\[\xi_v^2=1,\quad\ \text{but}\quad\  \Res_{K'_{v'}/K_v}(\xi_v)\neq 1\]
where $v'$ is a place of $K'$ over $v$;
see Lemma \ref{l:Res-K'/K}.
Since $\Gamma$ is cyclic, by the Chebotarev density theorem  there exists
a finite place $u$ of $K$, different from $v$, with decomposition group $\G_{\hm u}=\G=\G_{\hm v}$\hs.
Again as in  Construction \ref{Step2-local}, we have $\Hon(K_{u},T)\cong\Z/2\Z$.
Let $\xi_{u}\in \Hon(K_{u},T)$  be the  element with
\[\lambda_{u}(\xi_{u})=-\lambda_v(\xi_v)\in M_{\G\hm_u,\Tors}=M_\Gt=M_\Gvt\hs.\]
For all places $w$ of $K$ different from $v$ and $u$,
we set $\xi_{w}=1\in \Hon(K_{w}, T)$.
Then
\[\mu\big(\hs(\xi_w)_{w\in\V_K}\big)=
    \sum_{w\in\V_K}\mu_w(\xi_w)=\lambda_v(\xi_v)+\lambda_u(\xi_u)=0,\]
and it follows from \eqref{e:thm-Tate}
that there exists a (unique) element $\xi\in \Hon(K,T)$ with
\[\loc_{w}(\xi)=\xi_{w}\quad\ \text{for all}\ \, w\in\V_K.\]
Then $\loc_v(\xi)=\xi_v$ and clearly we have $\xi^2=1$, as desired.
\end{proof}

We now proceed with the global version of Construction~\ref{Step2-local}.

\begin{cons}\label{Step4}
We change our notation.
For $j=1,2,3$, let $T_j$ be the corresponding 2-dimensional torus of Construction \ref{Step3},
and let $\xi_j\in \Hon(K,T_j)$ be a cohomology class as in Lemma \ref{l:Kottwitz-1}.
Set
\[T=T_1\times T_2\times T_3\hs,\quad\  \xi=(\hs\xi_1,\hs\xi_2,\hs\xi_3\hs)\in \Hon(K,T).\]
Then $\xi \neq 1$ but $\xi^2=1$. In other words, $\per(\xi) = 2$.
\end{cons}

\begin{proof}[Conclusion of the proof of Theorem \ref{thm.main3-b}]
It remains to show that $4\,\big|\, \ind(\xi)$. For the sake of contradiction,
assume the contrary: $\xi$ can be split by a finite separable extension $L/K$ such that $4\nmid [L:K]$.

Localizing at our chosen place $v$ of $K$ with residue characteristic not $2$, we obtain
\[\xi_v=\loc_v(\xi) = (\hs\xi_{1,v}\hs,\,\xi_{2,v}\hs,\,\xi_{3,v}\hs) \in \Hon(K_v, T) \]
where $\xi_{j,v}=\loc_v(\xi_j) \in \Hon(K_v, T_j)$ for $j=1,2,3$.
Now observe that
\begin{equation*}
L\otimes_K K_v=\prod_{w \, | \, v}L_w\hs,  \quad \text{whence} \quad  [L:K]=\sum_{w|v}  \; [L_w:K_v].
\end{equation*}
Here $w$ in the last sum runs over the set of places of $L$ over $v$.
Since $4\nmid [L:K]$, there exists a place $w$ over $v$ such that
\begin{equation} \label{e.L_w:K_v}
4\nmid [L_w:K_v].
\end{equation}
Since $L$ splits $\xi$, the extension $L_w/K_v$ splits $\xi_v$, whence $\ind(\xi_v)\,\hs \big|\,\hs[L_w:K_v]$.
 On the other hand, by Construction \ref{Step3-local} and
Lemma~\ref{lem.4|index}, the index $\ind(\xi_v)$ is divisible by $4$ and thus
$4  \mid  [L_w:K_v]$, contradicting~\eqref{e.L_w:K_v}. This completes the proof of
Theorem~\ref{thm.main3-b}.
\end{proof}

\section{Example: non-existence of a power operation}
\label{app:Gille}

\centerline{\em by Philippe Gille}
\bigskip

Our goal is to examine closely a special case of the Galois
cohomology of the split Chevalley group of type $E_8$ denoted in the sequel by $E_8$.
This is mostly based on Bruhat-Tits theory,
and the relevant groups are constructed in \cite[Th.\ 1]{T}
and also in \cite{G3}.

In the field of complex numbers $\CC$, we consider
the standard roots of unity $e^{2\pi i/n}$
which provide isomorphisms $\ZZ/n\ZZ \simlgr \mu_n$,
$\widehat \ZZ \simlgr \widehat \ZZ(1)$ and
$\QQ/\ZZ \simlgr \QQ/\ZZ(1)$.

We consider the field of iterated formal Laurent series  $K=\CC((x))((y))((z))$.
By iterating \cite[\S 7.1, page 17]{GMS}, it follows that the inductive limit $K_s$
of the field extensions $K=\CC((x^{1/n_x}))((y^{1/n_y}))((z^{1/n_z}))$
is an algebraic closure of $K$; furthermore,
we have $\Gal(K_s/K) = (\widehat \ZZ)^3$.

We work with the following extension
$K'=k'((z))$ of $K$ (which is not algebraic)
where  $k'$ is  the maximal prime-to-$5$ extension of $k=\CC((x))((y))$.
We have $\Gal(k_s/k')= (\ZZ_5)^2$.
Again, the inductive limit $K'_s$
of the extensions $k'((z^{1/n}))$
is an algebraic closure of $K'$; furthermore,
we have  $\Gal(K'_s/K') = \widehat \ZZ \times
 \ZZ_5 \times  \ZZ_5$.
In particular, $K'$ is of cohomological dimension $3$, and more precisely,
of cohomological dimension $3$ at $5$ and of cohomological dimension $1$
at the other primes.
The residue map $\partial :
H^3(K',\QQ/\ZZ(2)) \to H^2(k',\QQ/\ZZ(1))$ is an isomorphism.
Taking into account the choice of roots of unity, we have
an isomorphism $$
H^3(K',\QQ/\ZZ(2)) \simla H^2(k',\QQ/\ZZ)=
H^2\bigl(\Gal(K'_s/K'),\QQ/\ZZ \bigr)
=H^2\bigl(\ZZ_5 \times \ZZ_5,\QQ/\ZZ \bigr)=\ZZ/5\ZZ.
$$

\begin{proposition} \label{p:Gille}\

\begin{enumerate}
\item[\rm (1)] The Rost invariant  (as defined in  \cite[Thm.\ 9.10]{GMS})
 $$H^1(K',E_8) \to H^3(K',\QQ/\ZZ(2)) \cong \QQ_5/\ZZ_5$$ is injective and provides a bijection $H^1(K',E_8) \simlgr \ZZ/5\ZZ$.

\smallskip

\item[\rm (2)] There exists a maximal $K$--torus $T$ of $E_8$ such that the composite
$$
H^1(K,T) \to H^1(K,E_8) \xrightarrow{\Rost} H^3(K,\QQ/\ZZ(2))
$$
is not a group homomorphism.

\smallskip

\item[\rm (3)] For the previous $K$-torus $T$, there exists a $1$-cocycle $\xi \in H^1(K,T)$
such that the associated map  $H^1(K',T) \to H^1(K',\,{_\xi E_8})$
has the following property: there exists $\gamma \in
\ker\bigl( H^1(K',T) \to H^1(K',{_\xi E_8}) \bigr)$ such that
$\gamma^2 \not \in \ker\bigl( H^1(K',T) \to H^1(K',{_\xi E_8}) \bigr)$.
\end{enumerate}
\end{proposition}

In (1), the triviality of the kernel
is related to  a result of Chernousov \cite{Che94}.

From this proposition, it follows that there is no functorial in $G$
power operation on $H^1(K',G)$ as in Question \ref{q:main};
see the deduction of Theorem \ref{t:non-ex} from Proposition \ref{p:Gille} at the end of Section \ref{s:number}.

\begin{proof}
It is enough to deal with the mod 5 part
of the Rost invariant; it is of 5-torsion
\cite[Th.\ 16.8 page 150]{GMS}.

\smallskip

\noindent (1)  We consider  the extended Dynkin diagram $\widetilde E_8$
of the Dynkin diagram $E_8$
$$
\begin{picture}(105,45)
\put(155,02){\circle*{2}}
\put(-40,02){\line(1,0){140}}
\put(60,02){\line(0,1){20}}
\put(00,02){\circle*{3}}
\put(20,02){\circle*{3}}
\put(40,02){\circle*{3}}
\put(60,02){\circle*{3}}
\put(80,02){\circle*{3}}
\put(100,02){\circle*{3}}
\put(60,22){\circle*{3}}
\put(20,02){\circle*{3}}
\put(60,02){\circle*{3}}
\put(80,02){\circle*{3}}
\put(100,02){\circle*{3}}
\put(-20,02){\circle*{3}}
\put(-40,02){\circle*{3}}
\put(-48,-11){-$\alpha_0$}
\put(-25,-11){$\alpha_8$}
\put(-5,-11){$\alpha_7$}
\put(15,-11){$\alpha_6$}
\put(35,-11){$\alpha_5$}
\put(55,-11){$\alpha_4$}
\put(75,-11){$\alpha_3$}
\put(95,-11){$\alpha_1$}
\put(55,30){$\alpha_2$}
\end{picture}
$$
\vskip2mm

\noindent We write the Bruhat-Tits decomposition \cite[cor.\ 3.15]{BT3}.
\begin{equation}\label{eq_dec}
\coprod\limits_{ I \subsetneq \widetilde E_8}
H^1(k', M_I)_{\irr} \simlgr H^1(K',E_8)
\end{equation}
where each $M_I$ is  a  split reductive $k'$--group
whose Dynkin diagram is $I$.
The subset $H^1(k', M_I)_{\irr} $ stands for
 irreducible Galois cohomology classes, that is,
not admitting a reduction to any proper parabolic subgroup.
We proceed by case-by-case analysis.
\smallskip

\noindent{\it Case $I=\emptyset$.} $M_I$ is a split $k'$-torus,
so that $1=H^1(k', M_I)_{\irr}$ according to Hilbert 90's theorem.
If follows that $1=H^1(k', M_I)_{\irr}$ which maps
to the distinguished element of $H^1(K',E_8)$.
\smallskip

\noindent{\it Case $I=\widetilde E_8  \setminus \{- \alpha_0\}$.}
We have $M_I=E_8$\hs,
so that $H^1(k',E_8)=1$ in view of a known case
of Serre's conjecture II \cite[lemme 9.3.2]{G2}.

\noindent{\it Case $I=\widetilde E_8  \setminus \{- \alpha_5\}$.}
We have
$M_I= (\SL_5 \times \SL_5)/ \mu_5$ with the embedding
$\mu_5 \to \mu_5 \times \mu_5$, $x \mapsto (x,x^2)$
(using Tits' algorithm \cite[\S 1.7]{T}).
We denote this $k'$--group by $H$.
We denote by $\delta: H^1(k',H) \to H^2(k',\mu_5)$
the boundary map associated to the  isogeny
$1 \to \mu_5 \to \SL_5 \times \SL_5 \to H \to 1$
where the first map is the diagonal map.

The point is that $\delta$
is injective since $k'$ has cohomological dimension $2$
(and reduced norms are onto).  Next we have $$
H^2(k',\mu_5)\simla H^2(k',\ZZ/5\ZZ)= H^2\bigl(\Gal(K'_s/K'),\ZZ/5\ZZ \bigr)
=H^2\bigl(\ZZ_5 \times \ZZ_5,\ZZ/5\ZZ\bigr)=\ZZ/5\ZZ
$$
 and claim that the injective map
  $\delta:  H^1(k',H) \to  H^2(k',\mu_5) \cong \ZZ/5\ZZ$
 is onto. According to a result of Rost--Springer  \cite[Th.\ 1.1]{G3},
 $\mathrm{Im}(\delta)$ contains the image of the boundary map
  $$\delta^\sharp: H^1(k',\PGL_5) \to  H^2(k',\mu_5)\cong \ZZ/5\ZZ$$
  associated to the exact sequence
  $$1 \to \mu_5 \to \SL_5 \to \PGL_5 \to 1.$$
  We use  the commutative diagram, where $k=\CC((x))((y))$,
  \[
\xymatrix{
 \delta^\sharp_{k} :& H^1(k,\PGL_5) \ar[r]\ar[d]^{\Res}&
    H^2(k,\mu_5)\cong \ZZ/5\ZZ \ar[d]^{\Res} \\
 \delta^\sharp_{k'}: &H^1(k',\PGL_5) \ar[r]&   H^2(k',\mu_5)\cong \ZZ/5\ZZ
 }
\]
and since $k'$ is a prime-to-$5$ closure of $k$,
the right-hand-side restriction is injective.
Finally, we know that the map  $\delta^\sharp_{k}$ is onto
by using the  cyclic division
algebras $X^5=x^i$, $Y^5=y$, $YX= e^{2\pi i/5} XY$
for $i=1,2,3,4$. Thus $ \delta^\sharp_{k'}$ is bijective.
We conclude  that $H^1(k',H)$ has five elements,
the trivial class and four irreducible classes.

\smallskip

\noindent{\it Other indices.}
The factors of the $k'$--group $M_{I,\ad}$ occur in the following list:
$$
A_1, A_2, A_3, A_5, A_6, A_7,
D_4, D_5,D_6,D_7,D_8,E_6, E_7 .
$$
The torsion primes of $M_{I,\ad}$ are then included
in $\{ 2,3,7\} $ \cite[\S 5.1]{G2}. Since $\mathrm{cd}_2(k')=
\mathrm{cd}_3(k')=0$, Serre's conjecture I (Steinberg's theorem refined
in \cite[Th.\ 5.2.5]{G2}) states that $H^1(k',M_{I,\ad})=1$
whence $H^1(k', M_I)_{\irr}=\emptyset$.

This case-by-case analysis shows that the decomposition
\eqref{eq_dec} simplifies then
 in a  bijection
\begin{equation*}
H^1(k',H) \simlgr H^1(K',E_8).
\end{equation*}
We use now the compatibility \cite[\S III.3, th.\ 2]{G1}
\[
\xymatrix{
&& H^1(k',H) \ar[d]^{\delta} _{\wr}
\ar[r] &  H^1(K',E_8)  \ar[d]^{r_5} \\
 \ZZ/5\ZZ &=& H^2(k',\mu_5)  & \ar[l]^{ \sim}_{\partial} H^3(K', \mu_5^{\otimes 2}).
 }
\]
Note that the residue map $\partial$ is bijective since
$k'$ has cohomological dimension $2$.
Overall we conclude that  the Rost invariant
is bijective.
\smallskip

\noindent (2) The idea is to use the subgroup $A=\mu_5 \times \mu_5 \times \ZZ/5\ZZ$ of $E_8$
described in \cite[\S 6]{GQM}.
Our choice of a primitive   $5$--root of unity $\omega=e^{2\pi i/5}$
gives rise to   isomorphisms $\ZZ/5\ZZ \simlgr \mu_5$, $A \simlgr (\mu_5)^{3}$ and $\mu_5^{\otimes 2} \cong
\mu_5^{\otimes 3}$.
This reference shows that the composite
$$
 \rho: K^\times /(K^\times)^5 \times K^\times/(K^\times)^5
 \times K^\times/(K^\times)^5= H^1(K,A) \to
 H^1(K,E_8) \xrightarrow{r_5} H^3(K,\mu_5^{\otimes 2})
$$
is $\rho(a,b, c)=  - (a) \cup (b) \cup (c)$.
We have $\rho(a^2,b^2, c^2)=
-  (a^2) \cup (b^2) \cup (c^2) = 3 \, \rho(a,b, c)$,
so that we see that $\rho$ is not a group homomorphism
by testing it on $(x,y,z)$.

We have $\mu_5 \times \mu_5 \subset T_0$ where
$T_0$ is a maximal $K$--split torus of $E_8$
and $\ZZ/5\ZZ$ normalizes $T_0$.

We twist $T_0$ and $E_8$ by the character $\chi$ defined by $z$. It provides a maximal $K$--subtorus $T \subset G$ where
$G$ is a $K$--form of $E_8$. Using the fact that
$H^1(\CC((z)),E_8)=1$, we obtain that $G$ is isomorphic
to $E_8$. We have then a natural
 $K$--subgroup $\mu_5 \times \mu_5 \subset T$. We write  the compatibility
\cite[lemme 7]{G1}
\[
\xymatrix@C=1.1cm{
 \rho: & H^1(K,\mu_5)^3 \ar[d]^{\id- (1,1,z)}
\ar[r] &  H^1(K,E_8) \ar[r]^-{\Rost} \ar[d]^\wr & H^3(K,\mu_5^{\otimes 3}) \cong \ZZ/5\ZZ
\ar[d]^{\id} \\
 \rho_1: 	& H^1(K,\mu_5)^3   \ar[r] &  H^1(K,G) \ar[r]^-{\Rost} & H^3(K,\mu_5^{\otimes 3}) \cong \ZZ/5\ZZ.
 }
\]
It follows that  $\rho_1(x^i,y^j,z^l)=\rho(x^i,y^j,z^{l+1})=
-i\,j \, (l+1)$. In particular
 $$
\rho_1(x^2,y^2,1) = \rho(x^2,y^2,z^{1})=
4 \, \rho(x,y,z^{1})= 4  \, \rho_1(x,y,1) \not = 0.
$$
Since $\mu_5 \times \mu_5 \subset T$,
we conclude that the composite map
\[H^1(K,T) \to H^3(K,G) \xrightarrow{\Rost} H^3(K,\QQ/\ZZ(2))\]
is not a group homomorphism.
\smallskip

\noindent (3) We work again with
$K$ and denote by
$\xi$ a $1$--cocycle representing the image of $(x,y) \in H^1(K,\mu_5)^2$
in $H^1(K,T)$.
From the above diagram we have $\rho_1(x,y,1)=\rho(x,y,z)=-1$, and
the same compatibility provides the commutative
diagram \[
\xymatrix@C=1.1cm{
 \rho_1: & H^1(K,\mu_5)^3 \ar[d]^{\id - (x,y,1)}_{\wr}
\ar[r] &  H^1(K,G) \ar[r]^-{\Rost} \ar[d]^\wr & H^3(K,\mu_5^{\otimes 3}) \cong \ZZ/5\ZZ
\ar[d]^{\id +1}_{\wr} \\
 \rho_2: 	& H^1(K,\mu_5)^3   \ar[r] &  H^1(K,{_\xi G}) \ar[r]^-{\Rost} & H^3(K,\mu_5^{\otimes 3}) \cong \ZZ/5\ZZ.
 }
\]
It follows that
$$
\rho_2(x^i,y^j,z^l)=
\rho_1( x^{i+1}, y^{j+1},z^{l})+1=
 - (i+1) (j+1) (l+1) +1 .
 $$
We have then $\rho_2(x^2,y,1)= -3 \times 2 \times 1 +1= 0$
and $\rho_2(x^4,y^2,1)= -5 \times 3 \times 1 + 1=1 $.
The computation survives when going to $K'$.
Since both Rost invariants over $K'$
 is injective by (1), the class $\gamma$ which is the image of
$(x^2,y)$ in $H^1(K',T)$  belongs to
$\ker\bigl( H^1(K',T) \to H^1(K',{_\xi G}) \bigr)$
but   $\gamma^2 \not \in
\ker\bigl( H^1(K',T) \to H^1(K',{_\xi G}) \bigr)$.
\end{proof}

\begin{remark}{
There are two situations similar to $E_8$ at the prime $5$ \cite[\S 6]{GQM}:
 $G_2$ at  the prime $2$ and $F_4$ at the prime $3$.
For  $G_2$ at $2$, only part (1) of  Proposition \ref{p:Gille} holds (and follows actually from the general result that the Rost invariant is injective for $G_2$ \cite[Cor.\ 33.25]{involutions}).
For $F_4$ at the prime $3$, the whole statement
of the proposition holds.
The $k'$-group $H$ is then $(\SL_3 \times \SL_3)/\mu_3$ attached to the vertex $\alpha_2$ in the extended Dynkin diagram
$$
\begin{array}{ll}
\mbox{$\widetilde F_4 \qquad \qquad $}
\quad
\begin{picture}(100,10)
\put(-20,00){\line(1,0){20}}
\put(00,00){\line(1,0){20}}
\put(20,1.1){\line(1,0){30}}
\put(20,-1.2){\line(1,0){30}}
\put(30,-2.5){$>$}
\put(50,00){\line(1,0){20}}
\put(-20,0){\circle*{3}}
\put(00,0){\circle*{3}}
\put(20,0){\circle*{3}}
\put(50,0){\circle*{3}}
\put(70,0){\circle*{3}}
\put(-35,-11){$-\alpha_0$}
\put(-5,-11){$\alpha_1$}
\put(15,-11){$\alpha_2$}
\put(45,-11){$\alpha_3$}
\put(65,-11){$\alpha_4$}
\end{picture}
\end{array}
$$
\vskip4mm
\noindent We use then the  subgroup $\mu_3 \times \mu_3 \times \ZZ/3\ZZ$.
At the end of the proof, we
take the class $\gamma \in H^1(K', T)$ corresponding to $(x,y,1)$}.
\end{remark}

\end{document}